\newcommand{\set}[1]{\left\{ #1 \right\}}
\newcommand{\R}{\mathbb{R}}
\newcommand{\Z}{\mathcal{Z}}
\newcommand{\inner}[2][]{\left\langle #2 \right\rangle_{#1}}
\newcommand{\paren}[1]{\left(#1\right)}
\newcommand{\abs}[1]{\left| #1 \right|}
\newcommand{\size}[1]{\left| #1 \right|}
\newcommand{\norm}[2][]{\left\| #2 \right\|_{#1}}
\newcommand{\bigOh}[1]{\mathcal{O}\!\paren{#1}}
\newcommand{\bigTheta}[1]{\Theta\!\paren{#1}}
\newcommand{\expect}[1]{\mathbb{E}\left[#1\right]}
\newcommand{\hide}[1]{}
\newcommand{\IPL}[1][]{\mathcal{L}^{\textrm{IP}}_{#1}}
\newcommand{\sHL}{\mathcal{L}^{sH}}
\newcommand{\NP}{$\mathcal{NP}$}
\newcommand{\NPc}{$\mathcal{NP}$-complete}
\DeclareMathOperator{\Ker}{Ker}
\DeclareMathOperator{\IM}{Im}
\newcommand{\half}{\nicefrac{1}{2}}
\newcommand{\one}{\mathbbm{1}}
\newcommand{\boundary}{\partial} 
\DeclareMathOperator{\sgn}{sgn}
\DeclareMathOperator{\Vol}{Vol}
\DeclareMathOperator{\trace}{trace}
\DeclareMathOperator{\Cor}{Cor}
\DeclareMathOperator*{\argmin}{argmin}
\theoremstyle{plain}
\newtheorem{theorem}{Theorem}
\newtheorem{lemma}{Lemma}
\newtheorem{corollary}{Corollary}
\theoremstyle{definition}
\newtheorem*{define}{Definition}
\newtheorem*{mEML}{Matrix Expander Mixing Lemma}
\newtheorem*{cEML}{Combinatorial Laplacian Expander Mixing Lemma}
\newtheorem*{nEML}{Normalized Laplacian Expander Mixing Lemma}
\newcommand{\sinan}[1]{\color{brown}#1 - Sinan\ \color{black}}
\author{Sinan G. Aksoy and Stephen J.\ Young}
\thanks{This research was partially supported by the Mathematics for Artificial Reasoning in Science
(MARS) initiative, under the Laboratory Directed Research and Development (LDRD) Program
at Pacific Northwest National Laboratory (PNNL). PNNL is a multi-program national laboratory
operated for the U.S. Department of Energy (DOE) by Battelle Memorial Institute under Contract
No. DE-AC05-76RL01830. \emph{PNNL Information Release:} PNNL-SA-210426}
\title{Re-imagining Spectral Graph Theory}
\begin{document}

\allowdisplaybreaks
\begin{abstract}
We propose a Laplacian based on general inner product spaces, which we call the {\it inner product Laplacian}. We show the combinatorial and normalized graph Laplacians, as well as other Laplacians for hypergraphs and directed graphs, are special cases of the inner product Laplacian. After developing the necessary basic theory for the inner product Laplacian, we establish generalized analogs of key isoperimetric inequalities, including the Cheeger inequality and expander mixing lemma. Dirichlet and Neumann subgraph eigenvalues may also be recovered as appropriate limit points of a sequence of inner product Laplacians. In addition to suggesting a new context through which to examine existing Laplacians, this generalized framework is also flexible in applications: through choice of an inner product on the vertices and edges of a graph, the inner product Laplacian naturally encodes both combinatorial structure and domain-knowledge.
\end{abstract}
\maketitle

\section{Introduction}
As diverse as spectral graph theory’s connections are to different areas in mathematics, so too are its applications. 
With well-established connections to Riemannian geometry, algebraic topology, probability and more, modern spectral theory has evolved from its origins in molecular chemistry to become a powerful tool in far-ranging scientific domains. 
This depth in both the theory and practice of graph eigenvalues has spurred an overwhelming number of proposed Laplacian matrices for studying graphs and related combinatorial structures. These Laplacians vary in the mathematical traditions that inspired them (e.g. Hodge theory \cite{lim2020hodge, schaub2020random} to Markov chains \cite{chung1997spectral, li2010random}), the properties of graphs their spectra can detect (e.g. bipartiteness \cite{butler2010note}), and the types of combinatorial structures to which they may be applied (e.g. directed graphs \cite{chung2005laplacians, li2012digraph} to hypergraphs \cite{chitra2019random, chung1992laplacian, hayashi2020hypergraph, zhou2006learning}). Consequently, there is a need for Laplacian generalizations which unify existing Laplacians, and provide a general framework for their study and application. 

A key challenge in defining generalized Laplacians is striking a balance between providing sufficient generality, while imposing enough structure to enable a non-trivial theory. For example, a definition \cite[Sec. 1.1]{cohen2016faster} that arguably errs on the side of the satisfying the former views a generic Laplacian as ``simply a matrix with non-positive off-diagonal entries such that
each diagonal entry is equal to the sum of the absolute value of the other off-diagonal entries in that
column”. While this generality is advantageous in certain algorithmic contexts (e.g. as in \cite{cohen2016faster}), it is perhaps too general to mathematically unite different Laplacians in a meaningful way, or place them in a new context. On the other hand, a definition that arguably errs on the side of imposing structure to enable theory-building comes from \cite{Horak:WeightedLaplcian}. There, Horak and Jost use Hodge theory to define a Laplacian framework for simplicial complexes, which recovers the combinatorial and normalized Laplacians as special cases. However, their framework imposes a strong assumption -- an orthogonality condition between all faces of the simplicial complex. This limits this Laplacian's utility in practice, where imposing an orthogonality requirement between entities within social, biological, cyber, or other networks may be wholly unrealistic or at odds with domain knowledge.

In this work, we propose a Laplacian framework using general inner product spaces. We call our Laplacian generalization the ``inner product Laplacian". We take an approach rooted in simplicial homology like Horak and Jost, but do not impose any orthogality requirement. To derive a spectral theory for the inner product Laplacian requires that we invoke properties of inner product spaces in our analyses, such as {\it conformality} (\Cref{S:conformal}), a measure controlling the relationship between an inner product space and a specified basis for the same inner product space, whose computation we show is NP-complete. While these inner product space concepts and related tools may be of independent interest, we invoke them to derive generalized versions of fundamental results in spectral graph theory. Our main results include:
\begin{itemize}
    \item a formalization of how inner product Laplacian generalizes existing Laplacians, such as normalized and combinatorial Laplacian, as well as a host of Laplacians for hypergraphs and directed graphs (see \Cref{L:hyper} and \Cref{L:digraph_lap}, as well as \Cref{SS:higher_order}),
    \item proof that subgraph eigenvalues such as the Neumann eigenvalues, can be recovered as limit points of a sequence of inner product Laplacians (\Cref{T:neumann}),
    \item upper and lower bounds of Cheeger inequalities for the inner product Laplacian (\Cref{T:cheeger}), and
    \item a discrepancy inequality, the expander mixing lemma for the inner product Laplacian (\Cref{T:EML}).
\end{itemize}

While our approach suggests a rich theory underlying the inner product Laplacian, we've proposed the inner product Laplacian with hopes of enabling more domain-faithful and flexible spectral analyses of network data in practice. In particular, the inner product Laplacian provides a principled manner of fusing both combinatorial and non-combinatorial data, without resorting to ad-hoc solutions such as domain-informed edge-weighting schemes. Instead, domain-knowledge may be encoded as appropriate inner product spaces on the vertices and edges, which are then used to construct the inner product Laplacian. In this way, the inner product Laplacian combines both combinatorial, graphical information about adjacent vertices, as well as non-combinatorial information regarding the similarity of {\it non-adjacent} vertices and arbitrary pairs of edges, in a single Laplacian. Recent, concurrent work by the authors and others \cite{aksoy2024unifying} demonstrates the potential efficacy of this approach: defining domain-inspired inner product spaces for molecular data, they apply the resulting inner product Laplacian within a graph neural network framework to estimate the potential energy of atomistic configurations and find that an inner product Laplacian approach outperforms analogous, existing state-of-the-art methods. While far more work is necessary to understand the inner product Laplacian's potential in practice, this preliminary evidence suggests there are contexts in which the inner product Laplacian's fusion of domain data and combinatorial structure is effective, and more than ``the sum of its parts". 

We organize our work as follows. In \Cref{S:survey}, we survey existing Laplacians as belonging to three broad categories, and place the inner product Laplacian within this context. \Cref{S:conformal} introduces and analyzes properties of inner product spaces relevant to the inner product Laplacian, including strong and weak conformality. These concepts and the associated lemmas will later prove essential in establishing our results. Having established the necessary background, \Cref{S:IPL} formally defines and discusses the inner product Laplacian. Here, \Cref{SS:higher_order} formalizes the relationship between the inner product Laplacian and existing Laplacians for graphs, directed graphs, and hypergraphs; while \Cref{SS:subgraphs} also establishes a connection between the inner product Laplacian and subgraph eigenvalues, showing that Neumann eigenvalues can be recovered as the limit of a the spectral of a sequence of inner product Laplacians. \Cref{S:iso} then proves canonical isoperimetric inequalities for the inner product Laplacian, including the Cheeger inequality and the expander mixing lemma.

\section{Survey of Graph Laplacians}\label{S:survey}
Since Kirchoff's matrix tree theorem \cite{kirkoff_orig}, a dizzyingly large literature has emerged on graph Laplacians. Named for its correspondence to the discrete Laplace operator, the term ``graph Laplacian" is most commonly understood to mean the combinatorial Laplacian: the diagonal matrix of degrees minus the adjacency matrix, $D-A$. Yet, the proliferation of proposed Laplacians is now such that, as put by Von Luxburg \cite{von2007tutorial}: ``there is no unique convention which matrix exactly is called `graph Laplacian'...every author just calls `his' matrix the graph Laplacian." We argue the modern myriad of graph Laplacians can be organized into roughly three, potentially overlapping categories. First are what can be called {\it Laplacian variants}: these ``tweak" the definition of the combinatorial Laplacian, such as through sign changes, normalization, or averaging. In doing so, these new Laplacian variants often capture different structural properties of the graph. Second, ``graph-generalization Laplacians" are those which try to extend a given Laplacian to a more general class of combinatorial objects than graphs. Here, the goal is to incorporate additional information, such as edge-weights or directionality, in a principled and useful manner. Third, there are ``Laplacian generalizations", which cast multiple Laplacian variants as special cases of a single, more general definition. These aim to explain the differences exhibited by Laplacians through a unifying framework. Below, we review existing literature within each of these three categories, as well as place our work among them. \\

\paragraph{\bf Laplacian variants} In addition to the combinatorial Laplacian, two of the most often studied ``Laplacian variants" are the normalized Laplacian and signless Laplacian. As popularized by Chung's monograph \cite{chung1997spectral}, the normalized Laplacian can be written in terms of the combinatorial Laplacian $L=D-A$ as
\[
\mathcal{L}=D^{-1/2}LD^{-1/2}.
\]
 In large part, this Laplacian captures different properties than $L$ because its spectra is intimately related to stochastic processes. Indeed, conjugating $\mathcal{L}$ by $D^{1/2}$ yields $I-D^{-1}A$, a matrix that is sometimes recognized as the ``random walk Laplacian" because $D^{-1}A$ represents the transition probabilities in a uniform random walk. A matrix more subtly different to the combinatorial Laplacian is the signless Laplacian \cite{cvetkovic2005signless,haemers2004enumeration},
\[
|L|=D+A.
\]
Although sharing in many of the same properties as $L$, the signless Laplacian also differs in basic ways. For instance, the spectra of $|L|$ can always identify the number of bipartite components whereas that of $L$ cannot. In general, although differences between the combinatorial, signless, and normalized Laplacians abound, they also share important similarities. All three are positive semi-definite, and are essentially equivalent for regular graphs, in which case their spectra are related to one another by trivial shifts and scalings. \\

\paragraph{\bf Graph-generalization Laplacians} The largest class of proposed Laplacian matrices are  ``graph-generalization Laplacians". Unlike the aforementioned variants which focus on tweaking the abstract notion of a Laplacian, these aim to {\it extend} the notion of (combintorial, signless, or normalized) Laplacian to richer types of combinatorial objects than the usual (undirected, unweighted, simple) graph. In some cases, this generalization is straightforward. For instance, edge-weighted variants may be obtained by replacing the unweighted adjacency matrix with a weighted one; vertex-weighted variants also may be defined, albeit with more care \cite{chung1996combinatorial}. In other cases, however, there is no obvious or canonical extension, yielding a rich literature weighing different approaches. For instance, proposals for directed graph Laplacians are marked by how they navigate the tradeoff between faithfully encoding directionality while preserving matrix properties such as symmetry. Chung \cite{chung2005laplacians, chung2006diameter}, choosing the latter route, proposed a symmetrized digraph Laplacian
\[
L=\Pi - \frac{\Pi P + P^T \Pi}{2}
\]
where the diagonal stationary distribution matrix $\Pi$  plays the role of degree matrix, and the symmetrization of the transition matrix $P$ plays that of the adjacency matrix. Rather than symmetrize, some have opted to study asymmetric digraph Laplacians \cite{li2012digraph}, while others use complex numbers to encode directionality in a matrix that is Hermitian but not symmetric, as in Mohar's proposed adjacency matrix \cite{mohar2020new}, and the closely related magnetic Laplacian \cite{zhang2021magnet}. In a similar vein, a variety of Laplacians for hypergraphs and simplicial complexes have been recently proposed and studied, ranging from Markov-chain based Laplacians \cite{benko2024hypermagnet, chitra2019random, hayashi2020hypergraph, lu2013high, zhou2006learning} to so-called hypermatrices \cite{cooper2012spectra} or tensor Laplacians \cite{aksoy2024scalable, banerjee2021spectrum} -- and more \cite{chung1992laplacian, cvetkovic2005signless, mulas2021spectral, schaub2020random}. While running the gamut, many of these ``graph-generalization Laplacians" are borne of a similar desire to capture combinatorial structures inexpressible by graphs. \\

\paragraph{\bf Laplacian generalizations} Finally, ``Laplacian generalizations" cast multiple Laplacian variants as special cases of a more general definition. These more general definitions often arise naturally as more useful objects of study. For instance, the so-called generalized Laplacian, defined as any symmetric $n \times n$ matrix $Q$ satisfying
\[
\begin{cases}
Q_{i,j}<0 & \mbox{if $\{i,j\}$ is an edge} \\
Q_{i,j}=0 & \mbox{if $i\not=j$ are not adjacent}
\end{cases}
\]
is used to define the Colin de Verdi\'{e}re invariant of a graph \cite{vanderholst1995short}. This definition includes not only the combinatorial Laplacian, but also $-A$ as a Laplacian. In a similar vein, researchers in computer science sometimes find it advantageous to build computational algorithms under a more general definition; Cohen et al.~\cite{cohen2016faster} considers a Laplacian to be ``simply a matrix with non-positive off-diagonal entries such
that each diagonal entry is equal to the sum of the absolute value of the other off-diagonal entries
in that column."  Mathematically, this condition is equivalent to asserting that the all ones vector, $\one$, is in the left kernel of the matrix. Rather than relax these constraints on entries in Laplacians, other Laplacian generalizations aim to {\it parametrize} families of Laplacians, wherein specific Laplacians are recovered by particular parameter settings. The deformation Laplacian \cite{morbidi2013deformed} is a matrix polynomial,
\[
M(t)=I-At+(D-I)t^2,
\]
where $t$ is any complex number. Observe here that $M(1)$ recovers the graph Laplacian, $M(-1)$ the signless Laplacian, and $M(0)$ the identity.  In short, these and other Laplacian generalizations aim to unify Laplacian variants and thereby better understand how and why their differences arise. 

The prior work most germane to ours arguably belongs to all three of the aforementioned Laplacian categories. In \cite{Horak:WeightedLaplcian}, Horak and Jost propose a Hodge Laplacian inspired framework for weighted simplicial complexes, from which combinatorial and normalized Laplacians may be obtained. As we explain in detail in \Cref{S:IPL}, their framework crucially imposes an orthogonality assumption between faces of the simplicial complex. In this work, we follow in a similar vein to Horak and Jost, but drop this orthogonality requirement and consider Hodge Laplacians defined under general inner products; we call our more general framework the ``inner product Laplacian". Though seemingly subtle, this greater generality vastly enriches both the applicability of the inner product Laplacian in representing complex data and systems, as well as its theoretical underpinnings. With regard to the latter, developing a cogent spectral theory for the inner product Laplacian requires a careful consideration of the the relationship between inner product spaces and an orthonormal basis for that inner product space, which we flesh out next in \Cref{S:conformal}.

\section{Conformal Inner Product Spaces}\label{S:conformal}

A key technical tool we use to understand the properties of the inner product Laplacian is a new measure we call {\it conformality} that quantifies the relationship between an inner product space and a specified basis for the same inner product space.  Philosophically, the motivation for conformality is the practical observation that while an inner product space does not have a preferred representation or basis, oftentimes in applications there is a ontologically natural basis to associate with the inner product space.  As we will see in \Cref{S:IPL}, in the case of the inner product Laplacian for graphs the ontologically natural orthonomal basis is given by the vertices and edges of the graph.  As this concept may be of independent interest, we provide a self-contained discussion of conformality and some of its key properties.

\begin{define}[Conformality]
Let $\mathcal{E} = \set{e_1,\ldots, e_k}$ be an orthonormal basis for a $k$-dimensional real inner product space with $k\geq 2$.  Let $\mathcal{Z}$ be another $k$-dimensional real inner product space defined on the formal linear combinations of $\mathcal{E}$.
For $x \in \R^k$, let $\hat{x}$ denote the associated formal linear combination of elements of $\mathcal{E}$, i.e.\ $\hat{x} = \sum_i x_i e_i.$
 We say $\Z$ is \emph{strongly (weakly) $\rho$-conformal with respect to $\mathcal{E}$} if for all $x,y \in \R^k$ such that $\sum_i x_iy_i = 0$ (such that $x_iy_i = 0$ for all $i$), we have that
\[ \inner[\Z]{\hat{x},\hat{y}} \leq \rho \norm[\Z]{\hat{x}}\norm[\Z]{\hat{y}}.\] 

The \emph{strong (weak) conformality of $\Z$ with respect to $\mathcal{E}$} is the least $\rho$ such that $\Z$ is strongly (weakly) $\rho$-conformal.
\end{define}

Recall that given a finite-dimensional real inner product space $\Z$ and a basis for the inner product space $\mathcal{E}$, the inner product can be entirely specified by the positive definite matrix $M$ where the $M_{ij}$ entry is given by $\inner[\Z]{e_i,e_j}$.  Specifically, if $x, y \in \R^k$ and we define $\hat{x} = \sum_i x_i e_i$ and $\hat{y} = \sum_i y_i e_i$, then $\inner[\Z]{\hat{x},\hat{y}} = x^TMy$.  As we are mainly interested in the case where there is a natural basis for $\Z$, we will often abuse notation and refer to the $\Z$ by the matrix $M$ associated with the natural basis. In this case, we will say that $M$ \emph{represents} the inner product space $\Z$. Accordingly, we say that a positive definite matrix $M$ is strongly (weakly) $\rho$-conformal if $x^TMy \leq \rho \sqrt{x^TMx y^TMy}$ for all $x, y$ such that $x^Ty = 0$ ($x$ and $y$ have disjoint support).  As we will see in \Cref{L:similar}, the strong conformality is independent of the choice of basis (and hence the choice of representing matrix $M$), while weak conformality is closely tied to the form of the representing matrix $M$.  Intuitively,  this may be understood as a result of the conditions placed on $x$ and $y$ in the definition.  In the case of strong conformality the condition ``lives" in the underlying inner product space, while for weak conformality the independent support is inherently a combinatorial constraint dependent on the chosen basis, and hence, the matrix $M$. 

As a side note, the framing in terms of a representative positive definite matrix $M$ makes clear the existence of strong and weak conformality and that the weak conformality is bounded above by the strong conformality.  In particular, finding strong (weak) conformality is equivalent to finding the maximum of $x^TMy$ subject to the constraint that $x^TMx = y^TMy = 1$ and $x^Ty = 0$ ($x_iy_i = 0$ for all $i$).  Since $M$ is a positive definite matrix, the set of $(x,y)$ satisfying these constraints is closed and bounded (that is, compact) and thus there is a pair $(x^*,y^*)$ which achieves the maximum.  The observation that the strong conformality is an upper bound on the weak conformality follows immediately by from noting that if $x_iy_i =0$ for all $i$, then $x^Ty = 0$.

We first address the issue of explicitly calculating the conformality of a given positive definite matrix $M$.  Here, it is helpful to understand which matrix operations preserve the conformality of a matrix.  To that end, we make the following observation:

\begin{lemma}\label{L:similar}
If $M$ is the matrix representing a strongly $\rho$-conformal inner product space, then for any unitary matrix $U$, the matrix $U^T M U$ represents a strongly $\rho$-conformal inner product space. If $M$ is weakly $\rho$-conformal and $D$ is an invertible diagonal matrix, then $DMD$ represents a weakly $\rho$-conformal inner product space.
\end{lemma}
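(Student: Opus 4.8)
The plan is to prove both statements by a single change-of-variables argument, reducing the conformality of the transformed matrix to that of $M$. The key observation is that the two transformation types are precisely matched to the two notions of conformality: unitary matrices preserve the orthogonality condition $x^Ty = 0$ appearing in strong conformality, while diagonal matrices preserve the disjoint-support condition $x_iy_i = 0$ appearing in weak conformality. In both cases I would first check that the transformed matrix is again positive definite, so that it genuinely represents an inner product space, and then transfer the defining inequality.

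For the strong conformality claim, I would set $N = U^TMU$ and note that $N$ is positive definite since $U$ is invertible and $M$ is positive definite. Then, given any $u,v \in \R^k$ with $u^Tv = 0$, I would substitute $x = Uu$ and $y = Uv$. Since $U$ is unitary, $x^Ty = u^TU^TUv = u^Tv = 0$, so the pair $(x,y)$ satisfies the orthogonality hypothesis of strong conformality for $M$. Because $u^TNv = x^TMy$, $u^TNu = x^TMx$, and $v^TNv = y^TMy$, the strong $\rho$-conformality inequality for $M$ transfers verbatim to $N$, giving the desired bound for every admissible $(u,v)$.

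The weak conformality claim follows the same template with $N = DMD$; positive-definiteness is again immediate from invertibility of $D$. Given $u,v$ of disjoint support, I would set $x = Du$ and $y = Dv$. Because $D$ is diagonal, $x_iy_i = D_{ii}^2 u_iv_i = 0$ for every $i$, so $(x,y)$ also has disjoint support and satisfies the hypothesis of weak conformality for $M$. The same three identities $u^TNv = x^TMy$, $u^TNu = x^TMx$, and $v^TNv = y^TMy$ then carry the weak $\rho$-conformality inequality from $M$ to $N$.

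The argument has no substantial obstacle; the only point requiring care is the pairing itself, namely verifying that each transformation preserves exactly the constraint set relevant to its notion of conformality. This is also where the asymmetry of the lemma originates: a diagonal matrix need not preserve $x^Ty = 0$, and a unitary matrix generally destroys disjoint support, so neither statement can be strengthened by swapping the transformation classes. This foreshadows the later point that strong conformality is basis-independent while weak conformality remains tied to the form of the representing matrix.
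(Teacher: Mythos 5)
Your proof is correct and follows essentially the same route as the paper's: both verify positive definiteness of the transformed matrix and then transfer the conformality inequality via an invertible change of variables ($x = Uu$ in the unitary case, $x = Du$ in the diagonal case) that preserves the quadratic forms and exactly the relevant constraint, orthogonality for strong and disjoint support for weak conformality. Your substitution is simply the inverse direction of the paper's map $v \mapsto v^*$, so the two arguments are the same in substance.
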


\begin{proof}
We will denote by $M^*$ the matrix representing the modified inner product space, that is, $M^* = U^TMU$ in the strong case and $M^* = DMD$ in the weak case.  Note that since $M$ can be written in the form $Q^TQ$, in both cases there is a $Q^*$ such that $M^* = (Q^*)^TQ^*$ and thus $M^*$ is positive semi-definite.  Furthermore, since the kernel of $M$, $U$, and $D$ are all trivial by definition, $M^*$ is positive definite and hence defines an inner product space. 

In order to prove the conformality of $M^*$, we note that it suffices to provide an invertible function $v \mapsto v^*$ such that for any $x, y$ we have $x^TMx = (x^*)^TM^*x^*$,  $y^TMy = (y^*)^TM^*y^*$, and  $y^TMx = (y^*)^TM^*x^*$, while in addition preserving the appropriate orthogonality condition: $\sum x_iy_i = 0$ if and only if $\sum x_i^*y_i^*=0$ or $x_iy_i = 0$ if and only if $x_i^*y_i^* = 0.$  In the case of strong conformality this function is defined by $v^* = Uv$ and for weak conformality this function is defined by $v^* = D^{-1}v.$  It is a straightforward exercise to confirm that the desired conditions hold for these functions.
\end{proof}

We note that \Cref{L:similar} implies that the strong conformality of an inner product space is \emph{basis-independent} and a property of the underlying inner product space.  In contrast, it is easy to see that weak conformality is a property of the specific representation of the inner product space.  In particular, we observe that the weak conformality of a diagonal matrix is 0, while any matrix with a non-zero off-diagonal entry has a strictly positive weak conformality. Thus, conjugating a matrix $M$ by a unitary matrix can, and often does, change the weak conformality of the matrix.  Consequently, it is unsurprising that, as a linear algebraic property, the strong conformality is relatively straightforward to calculate, while, as a more combinatorial property, the weak conformality is more challenging.  

\begin{corollary}\label{C:strong}
Let $M$ be a $k \times k$ positive definite matrix and let $\lambda_1 \geq \lambda_2 \geq \cdots \geq \lambda_k > 0$ be the eigenvalues.   The strong conformality of $M$ is given by \[\frac{\lambda_1 - \lambda_k}{\lambda_1 + \lambda_k}.\] 
\end{corollary}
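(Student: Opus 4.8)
The plan is to reduce the problem to a clean eigenvalue optimization using the basis-independence guaranteed by \Cref{L:similar}. Since $M$ is positive definite, it is in particular symmetric, so by the spectral theorem there is a unitary (orthogonal) matrix $U$ with $U^TMU = \Lambda$, where $\Lambda = \operatorname{diag}(\lambda_1,\ldots,\lambda_k)$ is the diagonal matrix of eigenvalues. By \Cref{L:similar}, the strong conformality of $M$ equals the strong conformality of $\Lambda$, so it suffices to compute the strong conformality of the diagonal matrix $\Lambda$. This removes all the off-diagonal complications and leaves a maximization over a concrete quadratic form.

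Next I would set up the optimization. As noted in the discussion preceding the corollary, the strong conformality is the maximum of $x^T\Lambda y$ subject to $x^T\Lambda x = y^T\Lambda y = 1$ and $x^Ty = 0$. The natural change of variables is to rescale each coordinate by $\sqrt{\lambda_i}$: writing $u_i = \sqrt{\lambda_i}\,x_i$ and $w_i = \sqrt{\lambda_i}\,y_i$, the two normalization constraints become $\norm[]{u} = \norm[]{w} = 1$, the objective becomes $u^Tw$, and the orthogonality constraint $x^Ty = 0$ becomes $\sum_i \lambda_i^{-1} u_i w_i = 0$, i.e.\ $u^T\Lambda^{-1}w = 0$. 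So the task is to maximize the correlation $u^Tw$ between two unit vectors that are required to be orthogonal in the $\Lambda^{-1}$-weighted inner product.

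For the core computation I would argue that the extremal configuration concentrates the weight on the two extreme eigenvalues $\lambda_1$ and $\lambda_k$. Intuitively, to make $u$ and $w$ point as nearly in the same direction as possible while still satisfying $\sum_i \lambda_i^{-1} u_i w_i = 0$, one wants the cancellation in the weighted constraint to cost as little correlation as possible, which is achieved by pitting the smallest weight $\lambda_1^{-1}$ against the largest weight $\lambda_k^{-1}$ and placing all mass in those two coordinates. Concretely, I would take the two-dimensional ansatz supported on coordinates $1$ and $k$, write $u = (\cos\alpha, \sin\alpha)$ and $w=(\cos\beta,\sin\beta)$, impose $\lambda_1^{-1}\cos\alpha\cos\beta + \lambda_k^{-1}\sin\alpha\sin\beta = 0$, and maximize $u^Tw = \cos(\alpha-\beta)$; a direct calculation yields the value $\tfrac{\lambda_1-\lambda_k}{\lambda_1+\lambda_k}$. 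To see this two-coordinate solution is globally optimal, I would use Lagrange multipliers on the full problem: the stationarity conditions give $w = \mu u + \nu\,\Lambda^{-1}u$ (and symmetrically), which forces $u$ and $w$ to lie in a common two-dimensional eigenspace-type subspace spanned by at most two coordinate directions, and a convexity/extreme-point argument then shows the objective is maximized when those two directions are the extreme eigenvalues.

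The main obstacle I anticipate is the last step — rigorously confirming that the optimum is attained on exactly two coordinates and that these must be the largest and smallest eigenvalues, rather than merely verifying that the two-coordinate ansatz achieves the claimed value. The Lagrange-multiplier analysis pins down the structure of any stationary point, but one must still check that no configuration spreading mass across three or more coordinates, or using an intermediate pair $(\lambda_i,\lambda_j)$, beats the extreme pair; this requires a monotonicity argument showing the two-variable optimum $\tfrac{\lambda_i-\lambda_j}{\lambda_i+\lambda_j}$ is increasing in the spread and hence maximized by $(\lambda_1,\lambda_k)$.
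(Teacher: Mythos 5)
Your proposal is correct in substance but takes a genuinely different route from the paper. Both proofs share the first move---invoking \Cref{L:similar} to reduce to a diagonal matrix $\Lambda$---but diverge from there. The paper stays purely algebraic: it parametrizes pairs by the products $\alpha_i = x_i y_i$, which simultaneously encode the orthogonality constraint ($\sum_i \alpha_i = 0$) and the numerator ($x^T \Lambda y = \sum_i \alpha_i \lambda_i$), minimizes the denominator over all pairs realizing a given $\alpha$ via the substitution $x_i = \sqrt{|\alpha_i|}\,p_i$, $y_i = \sgn(\alpha_i)\sqrt{|\alpha_i|}/p_i$ (an AM--GM argument forces all $p_i$ equal), and then finishes with an exchange argument pushing all negative mass of $\alpha$ onto $\lambda_k$ and all positive mass onto $\lambda_1$. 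You instead absorb $\Lambda^{1/2}$ into the vectors and study stationary points of $u^T w$ subject to $\|u\| = \|w\| = 1$ and $u^T \Lambda^{-1} w = 0$. This route does go through: combining your two stationarity identities shows that a quadratic polynomial in $\Lambda^{-1}$ annihilates $u$, so any stationary $u$ (and the corresponding $w$) is supported on coordinates carrying at most two distinct eigenvalues; your two-coordinate trigonometric computation correctly yields $\frac{\lambda_i - \lambda_j}{\lambda_i + \lambda_j}$; and the monotonicity of $(a,b) \mapsto \frac{a-b}{a+b}$ (increasing in $a$, decreasing in $b$) is immediate, so the ``main obstacle'' you flag is real but surmountable. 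To finish rigorously you would still need three pieces of bookkeeping: constraint qualification (the three constraint gradients are independent at every feasible point, since dependence would force $w$ parallel to $u$ inside a single eigenvalue class, contradicting $u^T\Lambda^{-1}w = 0$); the degenerate multiplier cases, where the multiplier on $\|w\|^2$ vanishes---these force $u$ and $w$ into a single eigenvalue class and give objective value $0$, hence never beat the two-coordinate construction; and, when eigenvalues repeat, the conclusion ``supported on two coordinates'' must be weakened to ``supported on two eigenvalue classes,'' after which Cauchy--Schwarz within each class reduces the problem to the honest two-dimensional one. The trade-off: the paper's argument needs no calculus, no compactness-plus-regularity checks, and no case analysis on multipliers, while yours makes the structure of the extremizers explicit at the cost of that optimization-theoretic overhead.
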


\begin{proof}
By \Cref{L:similar}, we may assume without loss of generality that $M$ is a diagonal matrix with diagonals given by $\lambda_1 \geq \cdots \geq \lambda_k$.  In order to find the maximum conformality we first fix a sequence $\alpha_1,\ldots,\alpha_k$ such that $\sum_i \alpha_i = 0$ and restrict our attention to pairs $x,y \in \R^k$ such that $x_iy_i = \alpha_i$.  By construction, such pairs satisfy that $x^Ty = 0$ and $x^TMy = \sum_i \alpha_i \lambda_i.$  Thus, in order to find the maximum conformality among such pairs, it suffices to minimize $x^TMx y^TMy$ subject to the condition that $x_iy_i = \alpha_i$ for all $i$.  Because $M$ is assumed to be diagonal, the sign of a particular entry in $x$ or $y$ is irrelevant to the value.  Thus, we may reparameterize $x_i = \sqrt{\abs{\alpha_i}} p_i$ and $y_i = \sgn(\alpha_i) \sqrt{\abs{\alpha_i}} \frac{1}{p_i}$ for any strictly positive $p_i$. As a result, we have that
\begin{align*}
    x^TMx y^T M y &= \paren{\sum_i \lambda_i \abs{\alpha_i} p_i^2}\paren{\sum_j \lambda_j \abs{\alpha_j} \frac{1}{p_j^2}} \\
    &= \sum_i \lambda_i^2 \abs{\alpha_i}^2 + \sum_{i < j} \lambda_i \lambda_j \abs{\alpha_i}\abs{\alpha_j} \paren{ \frac{p_i^2}{p_j^2} + \frac{p_j^2}{p_i^2}} \\
\end{align*}
We note that since $t^2 + \frac{1}{t^2}$ is minimized when $t = \pm 1$ and all the $p_i$'s are positive, the minimum occurs when the $p_i$'s are identically $c$ for some positive $c$.  As a consequence, given the sequence $\alpha_1, \ldots, \alpha_k$ the associated maximal conformality is given by
\begin{equation}\frac{\sum_i \alpha_i \lambda_i}{\sum_i \abs{\alpha_i}\lambda_i}.  \tag{$*$} \label{E:conform}\end{equation}

At this point it suffices to find the sequence $\alpha_1, \ldots, \alpha_k$ which maximizes \eqref{E:conform}.  As the condition $\sum_i \alpha_i = 0$ is invariant under a uniform change of sign, we may assume without loss of generality that the $\alpha_i$'s are chosen so that $\sum_i \alpha_i \lambda_i > 0.$ Now note that for any $s, t > 0$ and $a \geq b > 0$, $\frac{s-a}{t + a} \leq \frac{s-b}{t+b}$.
Thus if we have that $\alpha_j < 0$ for some $j \in [k-1]$, then $\sum_{i \neq j} \alpha_i \lambda_i > 0$ and 
\[ \frac{\sum_{i \neq j} \alpha_i \lambda_i + \alpha_j\lambda_k}{\sum_{i \neq j} \abs{\alpha_i}\lambda_i + \abs{\alpha_j}\lambda_k} \geq  \frac{\sum_{i \neq j} \alpha_i \lambda_i + \alpha_j\lambda_j}{\sum_{i \neq j} \abs{\alpha_i}\lambda_i + \abs{\alpha_j}\lambda_j} \] as $\abs{\alpha_j\lambda_j} \geq \abs{\alpha_j\lambda_k}.$
Thus defining $\alpha'$ by $(\alpha_1, \cdots, \alpha_{j-1}, 0, \alpha_{j+1}, \cdots , \alpha_{k-1}, \alpha_k + \alpha_j)$, we have the value of $\eqref{E:conform}$ for $\alpha'$ is at least as large as that of $\alpha$.  Thus, we may assume without loss of generality that $-\alpha_k = \alpha_1 + \ldots + \alpha_{k-1}$ and $\alpha_1,\ldots,\alpha_{k-1} \geq 0.$  Indeed, as $\eqref{E:conform}$ is scale invariant we may assume that $\alpha_k = -1$ and $\alpha_1 + \cdots + \alpha_{k-1} = 1$.  But, as $\frac{z -  \lambda_n}{z+\lambda_n}$ is monotonically increasing in $z$, thus the maximum value is achieved when $\alpha_1 = 1$, $\alpha_2 = \cdots = \alpha_{k-1} = 0$, and $\alpha_k = -1$.
\end{proof}

A direct consequence of \Cref{C:strong} is that the strong conformality of $M$ is closely related to its condition number, $\kappa(M)$.  Indeed, the strong conformality may be rewritten as 
\[ \frac{\kappa(M) - 1}{\kappa(M) + 1} = 1 - \frac{2}{\kappa(M)+1}. \]
Thus if the strong conformality of $M$ is close to 1, then $M$ is poorly-conditioned, whereas if the conformality of $M$ is close to 0, then $M$ is well-conditioned.

Given the combinatorial nature of the disjoint support constraint it is natural to suspect that determining whether the weak conformality of a matrix $M$ exceeds a given $\rho$ would be \NPc.  Indeed,  the superficial similarity of the weak conformality to known \NPc\ problems such as \texttt{MaxCut} and \texttt{nCut}, suggests a broad class of candidate problems to reduce to the decision problem for weak conformality.  In fact, in \Cref{L:NPc} we will provide a surprisingly elegant reduction from the \texttt{Partition} problem.\footnote{The partition problem asks, given a set $x_1,\ldots, x_n$ of natural numbers, does there exist a partition $(S,\overline{S})$ so that $\sum_{s \in S} x_s = \sum_{t \in \overline{S}} x_t$.}  However, before proceeding to \Cref{L:NPc} it will be helpful to better understand the computational road-blocks to computing weak conformality.  In particular, we first consider the easier case of maximizing over a fixed support for $x$ and $y$.  To that end, fix a positive definite matrix $M$ and a set of indices $S$.  We wish to find the maximum value of $\frac{ x^TMy}{\sqrt{x^TMx y^TMy}}$ conditioned on $S$ containing the support of $x$ and $\overline{S}$ containing the support of $y$.  In order to simplify notation, we denote by $M_{AB}$  the submatrix of $M$ formed by rows in $A$ and columns in $B$.  Thus, the maximization problem to reduces to finding $x \in \R^{\abs{S}}$ and $y \in \R^{\abs{\overline{S}}}$, which maximizes 
\[ \frac{x^TM_{S\overline{S}}y}{\sqrt{x^TM_{SS}x y^TM_{\overline{S}\overline{S}}y}}.\] 
Since $M_{SS}$ and $M_{\overline{S}\overline{S}}$ are principle submatrices of $M$, they are positive definite and hence there exists invertible matrices $Q_S$ and $Q_{\overline{S}}$  such that $M_{SS} = Q_S^TQ_S$ and $M_{\overline{S}\overline{S}} = Q_{\overline{S}}^TQ_{\overline{S}}$. Defining $\widehat{x} = Q_Sx$ and $\widehat{y} = Q_{\overline{S}}y$, the maximization problem further reduces to $\hat{x}^T Q_S^{-T}M_{S\overline{S}} Q_{\overline{S}}^{-1} \hat{y}$ where $\norm[1]{\hat{x}} = \norm[1]{\hat{y}} = 1$.  It follows immediately that the maximum value is given by the largest singular value of 
$Q_S^{-T}M_{S\overline{S}} Q_{\overline{S}}^{-1}$.  Alternatively, the maximum is 
\[\norm[2]{Q_{S}^{-T}M_{S\overline{S}} Q_{\overline{S}}^{-1}Q_{\overline{S}}^{-T} M_{S\overline{S}}^TQ_{S}^{-1}}^{\nicefrac{1}{2}} = \norm[2]{Q_{S}^{-T}M_{S\overline{S}} M_{\overline{S}\overline{S}}^{-1} M_{S\overline{S}}^TQ_{S}^{-1}}^{\nicefrac{1}{2}}  = \norm[2]{M_{SS}^{-1}M_{S\overline{S}} M_{\overline{S}\overline{S}}^{-1} M_{S\overline{S}}^T}^{\nicefrac{1}{2}}, \]
where the last equality comes from standard symmetrization arguments for the spectrum.
Finally, we note that the explicit calculation of the outer matrix inverse can be avoided by instead solving a generalized eigenvalue problem $\lambda M_{SS} v = M_{S\overline{S}} M_{\overline{S}\overline{S}}^{-1} M_{S\overline{S}}^T v$.  Thus, together with the symmetry of $x$ and $y$, the weak conformality of $M$ may be calculated by finding the maximum eigenvalue in a collection of $2^{k-1}-1$ different generalized eigenvalue problems.  Now we are prepared to turn to the complexity of the decision problem for weak conformality.

\begin{lemma}\label{L:NPc}
    The decision problem for weak conformality is \NPc.
\end{lemma}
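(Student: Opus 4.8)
The plan is to establish NP-completeness in the usual two steps: membership in NP, followed by NP-hardness via a reduction from \texttt{Partition}, as the text foreshadows. Membership is the easy part: a certificate is a support set $S \subseteq [k]$ (equivalently, a bipartition of the coordinates), after which the preceding discussion shows the best value achievable with that support is the square root of the top eigenvalue of a single generalized eigenvalue problem $\lambda M_{SS}v = M_{S\overline{S}}M_{\overline{S}\overline{S}}^{-1}M_{S\overline{S}}^T v$, computable in polynomial time. So verifying that some support yields value $\geq \rho$ is in NP (with the usual caveat that one must argue the comparison of an algebraic eigenvalue against a rational threshold can be done in polynomial time, e.g.\ via the characteristic-polynomial/Sturm-sequence approach rather than computing the eigenvalue exactly).

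**For hardness, I would** start from an instance $x_1,\ldots,x_n$ of \texttt{Partition} and build a positive definite matrix $M$ whose weak conformality exceeds a chosen threshold $\rho$ if and only if a balanced partition exists. The natural attempt is to make $M$ a rank-one-plus-identity perturbation built from the vector $a = (x_1,\ldots,x_n)$, say $M = I + \beta\, aa^T$ for a suitable scalar $\beta > 0$, so that for disjointly-supported $x,y$ the cross term $x^T M y = \beta (a^Tx)(a^Ty)$ isolates the partial sums over the two sides of the partition, while the normalizing terms $x^TMx$ and $y^TMy$ involve $\|x\|^2 + \beta(a^Tx)^2$ and likewise for $y$. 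Plugging in indicator-type vectors $x = \one_S$, $y = \one_{\overline{S}}$ should make the conformality ratio a function of the two partial sums $\sigma_S = \sum_{s\in S} x_s$ and $\sigma_{\overline{S}} = \sum_{t\in \overline{S}} x_t$ (with $\sigma_S + \sigma_{\overline{S}}$ fixed), and I expect this ratio to be maximized exactly when $\sigma_S = \sigma_{\overline{S}}$, i.e.\ on a perfect partition. Setting $\rho$ to the value attained at the balanced point then gives the desired equivalence.

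**The main obstacle** is controlling the optimization within a fixed support: I must verify that the optimal $x$ supported on $S$ and $y$ supported on $\overline{S}$ are (up to scaling) the indicator vectors $\one_S, \one_{\overline{S}}$, rather than some other weighting that could let an unbalanced partition sneak above the threshold. This requires solving the inner generalized eigenvalue problem explicitly for the structured matrix $M = I + \beta aa^T$ — here the low-rank structure is essential, since $M_{S\overline{S}} = \beta\, a_S a_{\overline{S}}^T$ is rank one, which should collapse the top eigenvalue to a clean closed form in terms of $\sigma_S, \sigma_{\overline{S}}$ and $\|a_S\|^2, \|a_{\overline{S}}\|^2$ via the Sherman–Morrison formula for $M_{SS}^{-1} = (I + \beta a_Sa_S^T)^{-1}$. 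I would then reduce the whole problem to maximizing a single-variable function of the partition and argue (using the integrality of the $x_i$ and a gap argument) that the threshold $\rho$ separates balanced from unbalanced partitions, with the entries polynomially bounded so the reduction runs in polynomial time. Care is needed to ensure $\beta$ and $\rho$ have polynomial bit-complexity and that the strict monotonicity toward the balanced point is genuine rather than an artifact of the indicator-vector ansatz.
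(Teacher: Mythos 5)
Your NP-membership argument is correct and is exactly the paper's (certificate = the support, then one polynomial-time generalized eigenvalue problem), and the overall architecture of your hardness reduction --- a rank-one-plus-identity matrix built from the \texttt{Partition} instance, Sherman--Morrison to collapse the fixed-support problem, then a gap argument --- is also the paper's. But your specific instantiation $a = (x_1,\ldots,x_n)$ breaks the reduction. Carrying out the computation you outline: $M_{SS}^{-1}a_S = \frac{1}{1+\beta\|a_S\|^2}a_S$, so
\[
M_{SS}^{-1}M_{S\overline{S}}M_{\overline{S}\,\overline{S}}^{-1}M_{S\overline{S}}^T \;=\; \frac{\beta^2\|a_{\overline{S}}\|^2}{\bigl(1+\beta\|a_S\|^2\bigr)\bigl(1+\beta\|a_{\overline{S}}\|^2\bigr)}\, a_S a_S^T,
\]
whose top eigenvalue depends \emph{only} on $\|a_S\|^2 = \sum_{s\in S}x_s^2$ and $\|a_{\overline{S}}\|^2 = \sum_{t\in \overline{S}}x_t^2$. (In particular the optimizers are proportional to $a_S$ and $a_{\overline{S}}$, not your indicator vectors, and the ratio is not a function of the partial sums $\sigma_S,\sigma_{\overline{S}}$ at all.) Since $u\mapsto \beta u/(1+\beta u)$ is increasing and log-concave, the conformality of your $M$ is maximized precisely when $\sum_{s\in S}x_s^2 = \sum_{t\in\overline{S}}x_t^2$: your matrix detects balanced partitions of the \emph{squares} $x_i^2$, not of the $x_i$. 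These are inequivalent problems: $(1,2,3)$ is a YES instance of \texttt{Partition}, but the squares $(1,4,9)$ admit no balanced partition, so your reduction sends a YES instance to a matrix of low conformality.

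The fix is exactly the paper's trick: take $a = (\sqrt{x_1},\ldots,\sqrt{x_n})$ and $\beta = 1$, i.e.\ $M = aa^T + I$. Then $\|a_S\|^2 = \sum_{s\in S}x_s$ is the partial sum itself, and the fixed-support value becomes $\sqrt{\frac{X^2-t^2}{(X+1)^2-t^2}}$, where $2X = \sum_i x_i$ and $t$ is the imbalance of the partition; this is strictly decreasing in $t$, so the weak conformality equals $\frac{X}{X+1}$ if a balanced partition exists and is at most $\sqrt{\frac{X^2-1}{(X+1)^2-1}}$ otherwise, and any threshold $\rho$ strictly between these two values completes the reduction. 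With this one change the rest of your outline (the integrality gap and the bit-complexity remarks) goes through; note also that once the closed form is in hand you never need to verify the indicator-vector ansatz --- which is just as well, since it is false.
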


\begin{proof}
    We note that since matrix inverse and spectral radius of a matrix can be computed in polynomial time, the reformulation of weak conformality in terms of an exponential family of such problems immediately yields membership in \NP. It thus remains to show that the \texttt{Partition} problem can be reduced to the weak conformality decision problem.  In particular, given an instance of \texttt{Partition}  we will show how to construct a positive definite matrix $M$ and a value $\rho$ such that the instance has an affirmative answer if and only if the weak conformality of $M$ is at least $\rho$.

    To that end, let $x_1,\ldots, x_n$ be a sequence of natural numbers and let $2X = \sum_i x_i$.  Define the vector $x$ as the vector of square roots $(\sqrt{x_1}, \ldots, \sqrt{x_n})$ and let $M = xx^T + I$.  Extending the notation from above, for any subset of the indices $S$ we denote by $x_S$ the restriction of $x$ to the entries in $S$.  We will further define $\tilde{x}_S$ to be the unit vector in the direction of $x_S$.  Now for any fixed set $S$, we have that $M_{SS} = x_Sx_S^T + I = \norm[2]{x_s}^2 \tilde{x}_S\tilde{x}_S^T +  I$, $M_{S\overline{S}} = x_Sx_{\overline{S}}^T$, and $M_{SS}^{-1} = \frac{1}{\norm[2]{x_S}^2 + 1} \tilde{x}_S\tilde{x}_S + P_S$ where $P_S$ is the projection operator onto the space orthogonal to $x_S$.  Thus we have that 
    \begin{align*} M_{SS}^{-1}M_{S\overline{S}}M_{\overline{S}\overline{S}}^{-1}M_{S\overline{S}}^T &= \paren{\frac{1}{\norm[2]{x_S}^2 +1}\tilde{x}_S\tilde{x}_S^T + P_S}x_Sx_{\overline{S}}^T\paren{\frac{1}{\norm[2]{x_{\overline{S}}}^2+1}\tilde{x}_{\overline{S}}\tilde{x}_{\overline{S}}^T + P_{\overline{S}}}x_{\overline{S}}x_S^T \\
    &= \paren{ \frac{\norm[2]{x_S}}{\norm[2]{x_S}^2 +1}\tilde{x}_Sx_{\overline{S}}^T}\paren{\frac{\norm[2]{x_{\overline{S}}}}{\norm[2]{x_{\overline{S}}}^2+1}\tilde{x}_{\overline{S}}x_{S}^T} \\
    &= \frac{\norm[2]{x_S}^2\norm[2]{x_{\overline{S}}}^2}{\paren{\norm[2]{x_S}^2 +1}\paren{\norm[2]{x_{\overline{S}}}^2 +1}}\tilde{x}_Sx_S^T.
    \end{align*}
    Noting that $\norm[2]{x_S}^2 = \sum_{s \in S} x_s$, we have that  
    \[ \norm[2]{M_{SS}^{-1}M_{S\overline{S}}M_{\overline{S}\overline{S}}^{-1}M_{S\overline{S}}^T}^{\half} = \sqrt{ \frac{(X-t)(X+t)}{(X-t+1)(X+t+1)} } = \sqrt{\frac{X^2 - t^2}{(X+1)^2 - t^2}}, \]
    where $t$ is $\abs{ \sum_{s \in S} x_s - \sum_{t \in \overline{S}} x_t}.$  As $\frac{X^2-t^2}{(X+1)^2 - t^2}$ is a decreasing function of $t$ for positive $t$, this implies that if $x_1,\ldots,x_n$ is an affirmative instance, then the weak conformality of $M$ is $\frac{X}{X+1}$, while if it is negative instance it is at most 
    \[ \sqrt{ \frac{X^2 - 1}{(X+1)^2 - 1}}.\]  Thus a solution to the weak conformality decision problem for any $\rho$ in the interval between these two values provides a solution to the specified \texttt{Partition} instance, showing the \NPc ness of the decision problem for weak conformality. 
\end{proof}

It is worth mentioning that the above discussion implicitly assumes a computational model where infinite precision arithmetic can be done in constant time.  As conformality is is invariant under re-scaling (as noted in \Cref{L:similar}) and the weak conformality gap between the positive and negative instances for the \texttt{Partition} problem depends polynomially on the size of the instance, in principle it should be possible to extend the above result to the reader's preferred computational model of finite precision arithmetic. 

Given the computational challenges involved in calculating the weak conformality, it is natural to ask whether the strong conformality serves as an easily computable approximation for the weak conformality.  However, the next family of examples shows that the strong conformality can be an arbitrarily poor approximation for the weak conformality.

\begin{lemma}
Let $0 \leq \rho_w \leq \rho_s < 1$.  For any $k \geq 2$, there exists a positive definite matrix $M \in \R^{k \times k}$ such that the weak conformality of $M$ is $\rho_w$ and the strong conformality of $M$ is $\rho_s$.
\end{lemma}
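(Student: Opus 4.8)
The plan is to first settle the problem for $2 \times 2$ matrices, where both conformalities admit clean closed forms, and then to pad the resulting matrix up to size $k$ with a diagonal block in a way that changes neither quantity. The guiding intuition is that, by \Cref{C:strong}, the strong conformality only sees the extreme eigenvalues $\lambda_1, \lambda_k$, whereas the weak conformality is sensitive to the off-diagonal structure relative to the standard basis; this slack is exactly what lets us prescribe the two values independently.

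For the base case, consider $M_2 = \begin{pmatrix} a & b \\ b & c \end{pmatrix}$ with $a,c>0$. When $k=2$, the disjoint-support constraint forces (up to scaling and swapping coordinates) $x = e_1$ and $y = e_2$, so the weak conformality equals $\frac{\abs{b}}{\sqrt{ac}}$, while \Cref{C:strong} together with the eigenvalue formula $\lambda_{1,2} = \frac{a+c}{2} \pm \sqrt{\paren{\frac{a-c}{2}}^2 + b^2}$ gives strong conformality $\frac{\sqrt{(a-c)^2 + 4b^2}}{a+c}$. Setting these equal to $\rho_w$ and $\rho_s$ and writing $s = a+c$, $d = a-c$ reduces the system to $d^2(1-\rho_w^2) = (\rho_s^2 - \rho_w^2)\,s^2$. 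Since $\rho_w \le \rho_s < 1$, the coefficient $\frac{\rho_s^2-\rho_w^2}{1-\rho_w^2}$ lies in $[0,1)$, so this is solvable with $d^2 < s^2$, i.e.\ with $a,c > 0$. Concretely I would take $\delta = \sqrt{\frac{\rho_s^2 - \rho_w^2}{1 - \rho_w^2}} \in [0,1)$ and set $a = 1+\delta$, $c = 1-\delta$, $b = \rho_w\sqrt{1-\delta^2}$; then $\det M_2 = (1-\delta^2)(1-\rho_w^2) > 0$ certifies positive definiteness, and a direct substitution confirms the weak and strong conformalities are $\rho_w$ and $\rho_s$, respectively.

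For general $k \ge 2$ I would set $M = M_2 \oplus I_{k-2}$, the block diagonal matrix with the $2\times 2$ block above and $1$'s elsewhere on the diagonal. Its spectrum is that of $M_2$ together with $k-2$ copies of $1$, and since $1$ lies between the two eigenvalues of $M_2$, the extreme eigenvalues of $M$ coincide with those of $M_2$; hence by \Cref{C:strong} the strong conformality is unchanged and equals $\rho_s$. For the weak conformality, let $x,y$ have disjoint support. The identity block contributes $\sum_{i \ge 3} x_i y_i = 0$ to $x^TMy$, so $x^TMy = x_{\{1,2\}}^TM_2\,y_{\{1,2\}}$, while $x^TMx \ge x_{\{1,2\}}^TM_2\,x_{\{1,2\}}$ and likewise for $y$, because the padding adds only nonnegative terms. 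Since the restrictions $x_{\{1,2\}}, y_{\{1,2\}}$ inherit disjoint support, the weak $\rho_w$-conformality of $M_2$ applies and yields $x^TMy \le \rho_w\sqrt{x^TMx\; y^TMy}$, so the weak conformality of $M$ is at most $\rho_w$; taking $x = e_1$, $y = e_2$ attains it. Thus the weak conformality of $M$ is exactly $\rho_w$.

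The step I expect to be the crux is the weak-conformality half of the padding argument: one must verify that enlarging the matrix cannot create new disjoint-support pairs with larger correlation. This works precisely because the disjoint-support condition annihilates the diagonal padding's contribution to the bilinear form $x^TMy$, while that same padding can only inflate the quadratic forms in the denominator; hence padding never helps a maximizing pair, and the optimum stays inside the $2\times 2$ block. Everything else is a routine closed-form computation.
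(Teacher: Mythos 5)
Your proof is correct and takes essentially the same route as the paper: both settle the $2\times 2$ case with a matrix whose off-diagonal entry realizes $\rho_w$ as the weak conformality and whose diagonal imbalance is tuned so that \Cref{C:strong} yields strong conformality $\rho_s$, and both then pad with an identity block, noting that the eigenvalue $1$ lies between the extreme eigenvalues so neither conformality changes (indeed, your $M_2$ is just the scalar multiple $\sqrt{1-\delta^2}$ of the paper's matrix with diagonal $\alpha, \nicefrac{1}{\alpha}$ and off-diagonal $\rho_w$, taking $\alpha = \sqrt{\nicefrac{(1+\delta)}{(1-\delta)}}$). The only differences are refinements: you solve for the diagonal in closed form where the paper lets $\alpha$ range over $[1,\infty)$ and invokes continuity, and you spell out the restriction argument showing identity padding cannot increase the weak conformality, a step the paper merely asserts.
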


\begin{proof}
We first consider the case where $k = 2$ and let the matrix \[M = \begin{bmatrix} \alpha & \rho_w \\ \rho_w & \frac{1}{\alpha} \end{bmatrix}\] with $\alpha \geq 1 \geq \frac{1}{\alpha}$. 
As $\det(M) = 1 - \rho_w^2 \in (0,1]$ and the entries of $M$ are non-negative, $M$ is a positive definite.  To determine the weak conformality of $M$, we note that there is precisely one (unordered) pair $x$ and $y$ such that $x^TMx = y^TMy = 1$ and $x$ and $y$ have disjoint support, namely $\set{ (\frac{1}{\sqrt{\alpha}}, 0), (0,\sqrt{\alpha})}$.  This pair immediately gives that the weak conformality of $M$ is $\rho_w$.  

We now consider the strong conformality of $M$.  By \Cref{C:strong} this can written in terms of the eigenvalues of $M$, which are 
\[ \frac{ \alpha + \frac{1}{\alpha} \pm \sqrt{ (\alpha - \frac{1}{\alpha})^2 + 4\rho_w^2}}{2}.\]
Thus the strong conformality is 
\[ \frac{\sqrt{ (\alpha - \frac{1}{\alpha})^2 + 4\rho_w^2}}{\alpha + \frac{1}{\alpha}}.\] 
As $\alpha$ ranges over $[1,\infty)$, the strong conformality of $M$ ranges over $[\rho_w,1)$, and thus there exists some choice of $\alpha$ such that $M$ has strong conformality $\rho_s.$

To extend this result to $k > 2$, we adjoin an appropriately sized identity matrix and consider the matrix 
\[ M_k = \begin{bmatrix} M & 0 \\ 0 & I \end{bmatrix}.\]
As the adjoined matrix is diagonal, this weak conformality of $M_k$ is the same as $M$. Further, for any choice of $\alpha$
\[ \frac{ \alpha + \frac{1}{\alpha} - \sqrt{ (\alpha - \frac{1}{\alpha})^2 + 4\rho_w^2}}{2} \leq 1 \leq \frac{ \alpha + \frac{1}{\alpha} + \sqrt{ (\alpha - \frac{1}{\alpha})^2 + 4\rho_w^2}}{2}\]
and so the minimum and maximum eigenvalues of $M_k$ are the same as $M$, resulting in the same strong conformality.
\end{proof}

The following two lemmas will be essential to our proofs in \Cref{S:iso} of generalizations of the Cheeger inequality and the expander mixing lemma to the case of the inner product Laplacians.  In essence, these two lemmas can be thought of as a means to allow for an arbitrary positive definite matrix to be effectively approximated by diagonal matrix.  While both of the lemmas are phrased in terms of weak conformality, they immediately extend to strong conformality as the strong conformality is an upper bound on the weak conformality.

\begin{lemma}\label{L:conformal}
Let $M \in \R^{k\times k}$ be a $\rho$-weakly conformal matrix.  Let $x \in \R^k$ and define $\abs{x}$ as the component-wise absolute value of $x$, then 
\[ \frac{1-\rho}{1+\rho} \abs{x}^TM\abs{x} \leq x^TMx \leq \frac{1+\rho}{1-\rho}\abs{x}^TM\abs{x}.\] 
\end{lemma}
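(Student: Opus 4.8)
The plan is to split $x$ into its positive and negative parts and exploit the fact that these two vectors are disjointly supported, so that weak conformality controls their cross term. Concretely, I would write $x = x^+ - x^-$, where $x^+$ and $x^-$ are the componentwise positive and negative parts of $x$ (both entrywise nonnegative), so that $\abs{x} = x^+ + x^-$ while $x^+$ and $x^-$ have disjoint support. Expanding both quadratic forms then gives $x^TMx = a + b - 2c$ and $\abs{x}^TM\abs{x} = a + b + 2c$, where $a = (x^+)^TMx^+ \geq 0$, $b = (x^-)^TMx^- \geq 0$, and $c = (x^+)^TMx^-$. This recasts the whole statement as a bound on the ratio $\tfrac{a+b-2c}{a+b+2c}$.

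Next I would control the cross term $c$ via weak conformality. Since $x^+$ and $x^-$ have disjoint support, the matrix form of the defining inequality, $x^TMy \leq \rho\sqrt{x^TMx\,y^TMy}$, applied to the pair $(x^+, x^-)$ gives $c \leq \rho\sqrt{ab}$; applying it instead to $(x^+, -x^-)$, which is still disjointly supported, yields $-c \leq \rho\sqrt{ab}$, so together $\abs{c} \leq \rho\sqrt{ab}$. I would then use the AM--GM inequality $2\sqrt{ab} \leq a+b$ to upgrade this to the cleaner bound $\abs{2c} \leq \rho(a+b)$, which replaces the geometric mean by the sum and sets up the final sandwich.

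With $s := a+b$, the bound $\abs{2c} \leq \rho s$ immediately gives $(1-\rho)s \leq a + b \pm 2c \leq (1+\rho)s$. Because $\rho < 1$ — which holds since for a positive definite $M$ disjointly supported nonzero vectors are never parallel, forcing the weak conformality strictly below $1$ — the lower bounds are positive, so dividing the two estimates produces
\[ \frac{1-\rho}{1+\rho} \;\leq\; \frac{a+b-2c}{a+b+2c} \;\leq\; \frac{1+\rho}{1-\rho}, \]
which is exactly the asserted inequality after multiplying through by $\abs{x}^TM\abs{x} = a+b+2c > 0$; the degenerate case $x = 0$ is trivial.

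The only genuine idea here is the positive/negative decomposition: once one recognizes that $x^+$ and $x^-$ are disjointly supported and hence governed by weak conformality, the remaining steps — the quadratic expansion, the AM--GM upgrade from $\sqrt{ab}$ to $a+b$, and the final division — are routine. The one point I would check carefully is that $\rho < 1$, so that the denominator $a+b+2c$ stays bounded away from zero and both printed fractions are finite.
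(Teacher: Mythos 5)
Your proof is correct and follows essentially the same route as the paper's: decompose $x$ into disjointly supported positive and negative parts, bound the cross term $c$ by $\rho\sqrt{ab}$ via weak conformality, upgrade to $\abs{2c}\leq\rho(a+b)$ by AM--GM, and conclude. The only cosmetic difference is the last step, where the paper argues by monotonicity of the ratio in $c$ while you sandwich numerator and denominator and divide; your explicit check that $\rho<1$ (so the denominator stays positive) is a point the paper leaves implicit.
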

\begin{proof}
Let $x_+$ and $x_-$ be vectors in $\R^k_+$ with disjoint support such that $x_+ - x_- = x$.  Define $a = x_+^TMx_+$, $b = x_-^TMx_-$ and $c = x_+^TMx_-$. Since $\abs{x} = x_+ + x_-$, we have that 
\[ \frac{x^TMx}{\abs{x}^TM\abs{x}} = \frac{a - 2c + b}{a+2c+b} = 1 - \frac{4c}{a + 2c + b}.\]
Taking the derivative with respect to $c$, it is easy to see that this is monotonically decreasing with respect to $c$, thus the maximum value is achieved when $c$ is minimized and the minimum value is achieved when $c$ is maximized.  Since $x_+$ and $x_-$ have disjoint support and $M$ is weakly $\rho$-conformal, by applying the arithmetic-geometric mean inequality  we have that \[ -\rho\frac{a+b}{2} \leq -\rho\sqrt{ab} \leq c \leq \rho\sqrt{ab} \leq \rho \frac{a+b}{2}.\]
Choosing the extremal values of $c$, we have that 
\[ \frac{1-\rho}{1+\rho} = 1  - \frac{2\rho}{1+\rho} \leq \frac{x^TMx}{\abs{x}^TM\abs{x}} \leq 1 + \frac{2\rho}{1-\rho} = \frac{1+\rho}{1-\rho},\] 
which completes the proof. 
\end{proof}

While \Cref{L:conformal} enables using the conformality of a matrix to adjust the signs of the off diagonal entries of the associated quadratic form, \Cref{L:trace} allows for the direct approximation of the quadratic form by the diagonal terms of the quadratic form.

\begin{lemma}\label{L:trace}
Let $M \in \R^{k \times k}$ be a positive definite matrix representing a $\rho$-weakly conformal inner product space.  Then, 
\[ \frac{1-\rho}{1+\rho} \trace(M) \leq \one^TM\one \leq \frac{1+\rho}{1-\rho}\trace(M).\]
Further, for any vector $x \in \R^k$, 
\[ \frac{1-\rho}{1+\rho} \sum_i x_i^2 M_{ii} \leq x^TMx \leq \frac{1+\rho}{1-\rho}\sum_i x_i^2M_{ii}.\]
\end{lemma}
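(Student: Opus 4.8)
The plan is to prove the second (more general) inequality first, then recover the first as the special case $x = \one$, since $\one^T M \one = \sum_{ij} M_{ij} = \trace(M) + \sum_{i\neq j} M_{ij}$ and $\sum_i \one_i^2 M_{ii} = \trace(M)$. The key observation is that both displayed inequalities compare a full quadratic form against the sum of its diagonal contributions, and the natural bridge is \Cref{L:conformal}, which already controls $x^T M x$ against $\abs{x}^T M \abs{x}$. So the real content is relating $\abs{x}^T M \abs{x}$ to $\sum_i x_i^2 M_{ii}$.

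First I would reduce to nonnegative vectors. By \Cref{L:conformal}, $\frac{1-\rho}{1+\rho}\abs{x}^T M \abs{x} \leq x^T M x \leq \frac{1+\rho}{1-\rho}\abs{x}^T M \abs{x}$, and since $\abs{x}_i^2 = x_i^2$, it suffices to prove the claim for the nonnegative vector $\abs{x}$; hence I may assume $x \in \R^k_+$. Next, I would decompose the nonnegative quadratic form by splitting off each off-diagonal cross term. The cleanest route is to write, for $i \neq j$, each product $x_i x_j$ as a pairing of two vectors with disjoint support: namely, for the pair of coordinate directions $(e_i, e_j)$, the vectors $x_i e_i$ and $x_j e_j$ have disjoint support, so weak $\rho$-conformality gives $\abs{M_{ij} x_i x_j} \leq \rho \sqrt{(x_i^2 M_{ii})(x_j^2 M_{jj})}$. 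This bounds every off-diagonal term of $x^T M x = \sum_i x_i^2 M_{ii} + \sum_{i \neq j} x_i x_j M_{ij}$ by the geometric mean of the corresponding diagonal contributions.

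The main step is then to aggregate these pairwise bounds into a global factor. Setting $d_i = x_i^2 M_{ii}$ and $u_i = \sqrt{d_i}$, I would show
\[
\abs*{\sum_{i\neq j} x_i x_j M_{ij}} \leq \rho \sum_{i \neq j} u_i u_j = \rho\paren{\paren{\sum_i u_i}^2 - \sum_i u_i^2} \leq \rho\,(k-1)\sum_i d_i,
\]
but this naive Cauchy–Schwarz aggregation loses a factor of $k$ and will not give the clean $\frac{1\pm\rho}{1\mp\rho}$ constants. The hard part, and the step I expect to be the main obstacle, is obtaining the global conformality bound \emph{directly} rather than term-by-term: I should instead apply weak conformality to a single well-chosen pair of disjoint-support vectors built from $x$, so that the full off-diagonal sum is controlled in one stroke by $\rho\,\sum_i d_i$. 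Concretely, I would mimic the argument of \Cref{L:conformal}: write $x = x_+ - x_-$ to realize $\abs{x}^T M \abs{x}$ and $\sum_i x_i^2 M_{ii}$ as $a + 2c + b$-type expressions where the diagonal sum plays the role of the disjoint-support ``baseline,'' then invoke $\abs{c} \leq \rho\sqrt{ab} \leq \rho\frac{a+b}{2}$ exactly as before.

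I would carry out the steps as follows: (1) reduce to $x \in \R^k_+$ via \Cref{L:conformal}; (2) split $\R^k$ into blocks and apply weak conformality to the disjoint-support decomposition separating diagonal from off-diagonal mass, yielding $\sum_i x_i^2 M_{ii} - \rho\paren{\sum_i x_i^2 M_{ii}} \leq x^T M x \leq \sum_i x_i^2 M_{ii} + \rho\paren{\sum_i x_i^2 M_{ii}}$ after the arithmetic-geometric mean step; (3) rearrange the resulting $(1\pm\rho)$ bounds into the stated $\frac{1-\rho}{1+\rho}$ and $\frac{1+\rho}{1-\rho}$ form; and (4) specialize $x = \one$ to obtain the trace inequality. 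The only delicate point is ensuring the disjoint-support pair genuinely captures \emph{all} off-diagonal mass simultaneously so that the constant matches \Cref{L:conformal} rather than degrading by a dimension factor; once that pairing is set up correctly, the remaining manipulations are routine.
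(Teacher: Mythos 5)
Your proposal has a genuine gap at exactly the step you flagged as the ``main obstacle,'' and that obstacle cannot be overcome in the way you suggest. The analogy with \Cref{L:conformal} breaks down: there, the comparison target $\abs{x}^TM\abs{x}$ still contains all the off-diagonal mass, so the single splitting $x = x_+ - x_-$ captures the entire discrepancy in the one cross term $c = x_+^TMx_-$. Here the target $\sum_i x_i^2 M_{ii}$ contains \emph{no} off-diagonal mass, and no single pair of disjoint-support vectors can capture all of it: if $u$ is supported on $S$ and $v$ on $\overline{S}$, the cross term $u^TMv$ only sees entries $M_{ij}$ with $i \in S$, $j \in \overline{S}$, while the off-diagonal entries inside $S$ and inside $\overline{S}$ remain buried in the block terms $u^TMu$ and $v^TMv$, which are not diagonal. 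Worse, the intermediate inequality you assert in step (2), namely $x^TMx \leq (1+\rho)\sum_i x_i^2M_{ii}$, is \emph{false} in general, so no correct argument can pass through it: take $k=3$ and $M = I + \alpha(J-I)$ with $\alpha = \nicefrac{1}{2}$. Its weak conformality is $\rho = \nicefrac{2\alpha}{\sqrt{2+2\alpha}} = \nicefrac{1}{\sqrt{3}}$, yet $\one^TM\one = 6 > 3(1+\nicefrac{1}{\sqrt{3}}) \approx 4.73 = (1+\rho)\trace(M)$. The weaker constant $\frac{1+\rho}{1-\rho}$ in the lemma is essential, and it does not arise by ``rearranging'' a $(1\pm\rho)$ bound --- it arises because the correct argument produces a self-referential inequality in which $\one^TM\one$ appears on both sides and must be solved for.

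The paper's mechanism, which is the idea your proposal is missing, is to average over \emph{all} subsets of a fixed size: for each $j$, the identity $\sum_{S \in \binom{[k]}{j}} \one_S^TX\one_S = \binom{k-2}{j-2}\one^TX\one + \binom{k-2}{j-1}\trace(X)$ tracks exactly how often each off-diagonal and diagonal entry appears; writing $\binom{k}{j}\one^TM\one = \sum_S (\one_S+\one_{\overline{S}})^TM(\one_S+\one_{\overline{S}})$, applying weak conformality and AM--GM to each pair $(\one_S,\one_{\overline{S}})$, and then solving the resulting inequality for $\one^TM\one$ (optimizing over $j$) yields the quotient constants. A secondary problem with your plan: the reduction to nonnegative vectors via \Cref{L:conformal} is lossy --- composing the two bounds squares the constants, giving $\paren{\frac{1+\rho}{1-\rho}}^2$ rather than $\frac{1+\rho}{1-\rho}$ --- and it is also unnecessary. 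The paper derives the general-$x$ statement from the $\one$ statement (the reverse of your order) by conjugation: $x^TMx = \one^TD_xMD_x\one$ and $\trace(D_xMD_x) = \sum_i x_i^2M_{ii}$, where $D_xMD_x$ is again weakly $\rho$-conformal by \Cref{L:similar} regardless of the signs of the entries of $x$, with zero entries handled by passing to a principal submatrix. That conjugation step is lossless and makes the vector version an immediate corollary of the trace version.
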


\begin{proof}
We first note that for any $2 \leq j \leq k-1$ and any matrix $X \in \R^{k \times k}$, $\sum_{S \in \binom{[k]}{j}} \one^T_S X \one_S$ can be written in terms of $\one^TX\one$ and $\trace(X)$.  In particular, any off-diagonal term $x_{ij}$ appears precisely $\binom{k-2}{j-2}$ times in the sum, while the diagonal terms appear $\binom{k-1}{j-1}$ times.  Thus, by rearranging, we have 
\[ \sum_{S \in \binom{[k]}{j}} \one_S^T X \one_S = \binom{k-2}{j-2} \one^TX\one + \paren{\binom{k-1}{j-1} - \binom{k-2}{j-2}} \trace(X) = \binom{k-2}{j-2} \one^TX\one + \binom{k-2}{j-1} \trace(X).\]
  Note that by making the standard abuse of notation that the number of ways to choose a negative number of elements is 0, this can be extended to the case where $j=1.$

Thus, for the weakly $\rho$-conformal matrix $M$, we have that for  any $1 \leq j \leq k-1$
\begin{align*}
\binom{k}{j} \one^TM\one &= \sum_{S \in \binom{[k]}{j}} \paren{\one_S + \one_{\overline{S}}}^T M \paren{\one_S + \one_{\overline{S}}} \\
&= \sum_{S \in \binom{[k]}{j}} \one_S^TM\one_S + 2\one_S^TM\one_{\overline{S}} + \one_{\overline{S}}^TM\one_{\overline{S}} \\
&\geq \sum_{S \in \binom{[k]}{j}} \one_S^TM\one_S - 2\rho\sqrt{\one_S^TM\one_S\one_{\overline{S}}^TM\one_{\overline{S}}} + \one_{\overline{S}}^TM\one_{\overline{S}} \\
&= \sum_{S \in \binom{[k]}{j}} \paren{ \one_S^TM\one_S  + \one_{\overline{S}}^TM\one_{\overline{S}}} - 2\rho \sum_{S \in \binom{[k]}{j}} \sqrt{\one_S^TM\one_S\one_{\overline{S}}^TM\one_{\overline{S}}} \\
&\geq (1-\rho) \sum_{S \in \binom{[k]}{j}} \paren{ \one_S^TM\one_S  + \one_{\overline{S}}^TM\one_{\overline{S}}} \\
&= (1-\rho) \paren{\binom{k-2}{j-2} + \binom{k-2}{k-j-2}}\one^TM\one + (1-\rho) \paren{\binom{k-2}{j-1} + \binom{k-2}{k-j-1}}\trace(M) \\
&= (1-\rho) \paren{\binom{k-2}{j-2} + \binom{k-2}{j}}\one^TM\one + 2(1-\rho) \binom{k-2}{j-1}\trace(M), \\
\end{align*}
where the first inequality follows from the weak $\rho$-conformality of $M$ and the second follows from the arithmetic-geometric mean inequality.  By solving for $\one^TM\one$ we have that 
\[ \one^TM\one \geq \frac{2(1-\rho)\frac{j(k-j)}{k(k-1)}}{1 - (1-\rho)\paren{\frac{j(j-1) + (k-j)(k-j-1)}{k(k-1)}}}\trace(M) = \frac{2(1-\rho) \frac{j(k-j)}{k(k-1)}}{\rho +2(1-\rho)\frac{j(k-j)}{k(k-1)}} \trace(M).\]
We note that the same argument, with appropriate sign changes, will also upper bound $\one^TM\one$ in terms of $\trace(M)$ and in particular, we have that for any $1 \leq j \leq k-1$,
\[ \paren{1 - \frac{\rho}{2(1-\rho)\frac{j(k-j)}{k(k-1)} + \rho}}\trace(M) \leq \one^TM\one \leq \paren{1 + \frac{\rho}{2(1+\rho)\frac{j(k-j)}{k(k-1)} - \rho}}\trace(M).\]
Noting that $\max_{1 \leq j \leq k-1} \frac{j(k-j)}{k(k-1)} > \frac{1}{4}$ gives the desired result.

In order to recover the result for arbitrary $x \in \R^k$, we first consider the case where no entry of $x$ is zero.  Recall from \Cref{L:similar}, that if $M$ is weakly $\rho$-conformal then $DMD$ is also weakly $\rho$-conformal for any invertible diagonal matrix $D$. If we define $D_x$ as the diagonal matrix with $x$ along the diagonal, then $D_x$ is invertible, $x^TMx = \one^TD_xMD_x\one$,  and $D_xMD_x$ is $\rho$-conformal.  As $\trace(D_xMD_x) = \sum_i x_i^2 M_{ii}$, we have the desired result.  To handle the case where $x$ has zero entries, we note that if $M$ is $\rho$-conformal (strongly or weakly) then any (non-trivial) principle minor of $M$ is also $\rho$-conformal.  This can be seen by adding conditions of the form $x_i = y_i = 0$ for the non-principle indices to the optimization formulations of conformality.
\end{proof}

It is worth noting that both the upper and lower bound in \Cref{L:trace} are asymptotically tight.  To see this, consider the $k\times k$ matrix $M = I + \frac{\alpha}{k}J$.  Clearly for $\alpha > -1$, this matrix is positive definite, and further $\one^TM \one = k + \alpha k = (1+\alpha)k$ and $\trace(M) = k + \alpha$. Now to determine the weak conformality of $M$, we note that by re-scaling we may restrict our attention to $x$ and $y$ such that $\abs{\one^Tx} = \abs{\one^Ty} = k$.  In this case, the conformality ratio is given by 
\[ \frac{k\abs{\alpha}}{\sqrt{\paren{\norm[2]{x}^2 + k\alpha}\paren{\norm[2]{y}^2 + k\alpha}}}.\]
By standard results, if $x$ or $y$ has support of size $t$, then the minimum of $\norm[2]{x}^2$ given that $\abs{\one^Tx} = k$ is $\nicefrac{k^2}{t}$.  It is then easy to see that if $k$ is even, the conformality ratio is maximized when $t = \nicefrac{k}{2}$ and the conformality of $M$ is $\nicefrac{\abs{\alpha}}{2+\alpha}$.  Note that for $\alpha \in (-1,0)$ the weak conformality takes on values in $(0,1)$ and for $\alpha \in (0,\infty)$ the weak conformality takes on values in $(0,1)$.  Hence, depending on the choice of $\alpha$ the matrix $M$ takes on all non-zero, non-one possible values of the weak conformality twice, once for positive $\alpha$ and once for negative $\alpha.$  

Combining these results and applying \Cref{L:trace} we have that \[ \frac{2 +\alpha - \abs{\alpha}}{2 + \alpha + \abs{\alpha}} (k+\alpha) \leq \one^TM\one \leq \frac{2 + \alpha + \abs{\alpha}}{2 + \alpha - \abs{\alpha}} (k+\alpha).\]
If $\alpha \leq 0$, then the lower bound simplifies to $(1+\alpha)k+(1+\alpha)\alpha$ while if $\alpha \geq 0$ the upper bound simplifies to $(1+\alpha)k+(1+\alpha)\alpha$.  As $\one^TM\one = (1+\alpha)k$, we have that as $k \rightarrow \infty$ the lower bound is asymptotically tight if $\alpha \leq 0$ and the upper bound is asymptotically tight if $\alpha \geq 0$.  Thus, for any fixed $\rho$, there are two sequences of weakly $\rho$-conformal matrices such that one witnesses the tightness of the lower bound in \Cref{L:trace} and one witnesses the asymptotic tightness of the upper bound in \Cref{L:trace}.

While not directly relevant to the applications of conformality to understanding the behavior of the inner product Laplacian, the following straightforward observation indicates that both strong and weak conformality are measuring essential features of the isomorphism between the vector space defined with the natural basis and its dual space defined by the inner product.

\begin{lemma}\label{L:inverse}
For any positive definite matrix $M$, the strong- and weak-conformalities of $M$ are equal to the strong- and weak-conformalities of $M^{-1}$.
\end{lemma}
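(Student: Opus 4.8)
The strong and weak conformalities behave quite differently here, so I would treat them in turn. For the \emph{strong} conformality the statement is essentially immediate from \Cref{C:strong}. The eigenvalues of $M^{-1}$ are precisely the reciprocals $\lambda_k^{-1} \geq \cdots \geq \lambda_1^{-1}$ of those of $M$, so the strong conformality of $M^{-1}$ is $\frac{\lambda_k^{-1} - \lambda_1^{-1}}{\lambda_k^{-1} + \lambda_1^{-1}}$, and multiplying numerator and denominator through by $\lambda_1\lambda_k$ gives exactly $\frac{\lambda_1 - \lambda_k}{\lambda_1 + \lambda_k}$, the strong conformality of $M$.

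The weak conformality requires more, and here I would lean on the support-based characterization developed just before \Cref{L:NPc}: for a fixed split $(S,\overline{S})$ the maximal conformality ratio achievable by $x$ supported on $S$ and $y$ on $\overline{S}$ is $\norm[2]{M_{SS}^{-1} M_{S\overline{S}} M_{\overline{S}\overline{S}}^{-1} M_{S\overline{S}}^T}^{\half}$, and the weak conformality is the maximum of this over all $S$. Because the collection of support splits is identical for $M$ and $M^{-1}$, it suffices to prove the per-split spectral identity
\[ \lambda_{\max}\!\paren{M_{SS}^{-1} M_{S\overline{S}} M_{\overline{S}\overline{S}}^{-1} M_{S\overline{S}}^T} = \lambda_{\max}\!\paren{(M^{-1})_{SS}^{-1} (M^{-1})_{S\overline{S}} (M^{-1})_{\overline{S}\overline{S}}^{-1} (M^{-1})_{S\overline{S}}^T}, \]
and then take the maximum over $S$.

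To establish this I would abbreviate $A = M_{SS}$, $B = M_{S\overline{S}}$, $D = M_{\overline{S}\overline{S}}$ and write the blocks of $M^{-1}$ via the standard Schur-complement inverse formulas: $(M^{-1})_{SS} = (A - BD^{-1}B^T)^{-1}$, $(M^{-1})_{S\overline{S}} = -(A - BD^{-1}B^T)^{-1}BD^{-1}$, and $(M^{-1})_{\overline{S}\overline{S}} = (D - B^TA^{-1}B)^{-1}$. Substituting and cancelling, the right-hand matrix collapses to $BD^{-1}\paren{D - B^TA^{-1}B}D^{-1}B^T\paren{A - BD^{-1}B^T}^{-1}$. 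The clean way to compare its spectrum with that of the left-hand matrix $A^{-1}BD^{-1}B^T$ is to conjugate both by $A^{\half}$ and set $\tilde P = A^{-\half}BD^{-1}B^T A^{-\half}$, a symmetric matrix with $0 \preceq \tilde P \prec I$, where the strict upper bound is forced by positive-definiteness of $M$ (equivalently, the Schur complement $A - BD^{-1}B^T$ is positive definite). Under this conjugation the left matrix is similar to $\tilde P$, while the right matrix is similar to $\tilde P(I - \tilde P)(I - \tilde P)^{-1} = \tilde P$, so the two share a spectrum and in particular a largest eigenvalue.

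The main obstacle is the block-inverse bookkeeping in the last paragraph: keeping the four Schur-complement expressions straight and, above all, justifying the telescoping $\tilde P(I-\tilde P)(I-\tilde P)^{-1} = \tilde P$, which needs $I - \tilde P$ to be invertible. That invertibility is exactly where positive-definiteness of $M$ enters—geometrically, it is the statement that the complementary coordinate subspaces $V_S$ and $V_{\overline S}$ meet at a strictly positive minimal $M$-angle—and it licenses all the indicated inverses. Everything else is routine symmetric-matrix algebra, and the conceptual upshot matches the remark preceding the statement: both conformalities measure the $M$-geometry of the coordinate subspaces, which is intrinsically symmetric under passage to the dual inner product $M^{-1}$.
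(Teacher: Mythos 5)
Your proof is correct and follows essentially the same route as the paper's: the strong-conformality half via \Cref{C:strong} and reciprocal eigenvalues is identical, and for the weak half you use the same per-partition spectral characterization and the same Schur-complement block-inverse formulas. The only divergence is the final algebra: the paper writes the block $(M^{-1})_{S\overline{S}}^T = (M^{-1})_{\overline{S}S}$ in its other standard form $-\paren{D - B^TA^{-1}B}^{-1}B^TA^{-1}$, so the fourfold product telescopes immediately to $BD^{-1}B^TA^{-1} = \paren{A^{-1}BD^{-1}B^T}^T$ (co-spectral by transposition), whereas you keep the literal transpose and reach the same conclusion by conjugating with $A^{1/2}$ and using $\tilde{P}\paren{I-\tilde{P}}\paren{I-\tilde{P}}^{-1} = \tilde{P}$ --- a correct, slightly longer finish whose invertibility requirement ($I - \tilde{P} \succ 0$) you rightly ground in positive definiteness of the Schur complement.
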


\begin{proof}
Let $\lambda$ and $\sigma$ be the largest and smallest eigenvalues of $M$, respectively.  Then the largest and smallest eigenvalues of $M^{-1}$ are $\nicefrac{1}{\sigma}$ and $\frac{1}{\lambda}$.  By \Cref{C:strong}, the strong conformality of $M$ is $\nicefrac{\lambda - \sigma}{\lambda + \sigma}$, while the strong-conformality of $M^{-1}$ is
\[ \frac{\frac{1}{\sigma} - \frac{1}{\lambda}}{\frac{1}{\sigma} + \frac{1}{\lambda}} = \frac{\frac{\lambda-\sigma}{\lambda\sigma}}{\frac{\lambda + \sigma}{\lambda\sigma}} = \frac{\lambda - \sigma}{\lambda + \sigma},\] as desired.

In order to show that the weak conformality of $M$ and $M^{-1}$ are the same, we show that for a fixed $S$, the spectral norm of $M_{SS}^{-1}M_{S\overline{S}}M_{\overline{S}\overline{S}}^{-1}M_{S\overline{S}}^T$ is the same whether $M$ is considered or $M^{-1}$.  To that end, consider $M$ as block matrix 
\[ M = \begin{bmatrix} A & B \\ B^T & D\end{bmatrix} \]
representing the partition $(S,\overline{S})$.  With this notation,  $M_{SS}^{-1}M_{S\overline{S}}M_{\overline{S}\overline{S}}^{-1}M_{S\overline{S}}^T$ can be rewritten as $A^{-1}BD^{-1}B^T$.  Now, as $A$ and $D$ are invertible, by standard results 
\[ \widehat{M} = M^{-1} = \begin{bmatrix} \paren{A-BD^{-1}B^T}^{-1} & - \paren{A-BD^{-1}B^T}^{-1}BD^{-1} \\ - \paren{D-B^TA^{-1}B}^{-1}B^TA^{-1} & \paren{D-B^TA^{-1}B}^{-1} \end{bmatrix}.\]
Thus we have that $\widehat{M}_{SS}^{-1}\widehat{M}_{S\overline{S}}\widehat{M}_{\overline{S}\overline{S}}^{-1}\widehat{M}_{S\overline{S}}^T$ can be rewritten as 
\begin{align*}
    &\paren{A-BD^{-1}B^T}\paren{A-BD^{-1}B^T}^{-1}BD^{-1}\paren{D-B^TA^{-1}B}\paren{D-B^TA^{-1}B}^{-1}B^TA^{-1} \\ 
    &= BD^{-1}B^TA^{-1} \\
    &= \paren{A^{-1}BD^{-1}B^T}^T \\
    &= M_{SS}^{-1}M_{S\overline{S}}M_{\overline{S}\overline{S}}^{-1}M_{S\overline{S}}^T.
\end{align*} 
As the weak-conformality can be calculated in terms of the spectral norms of $M_{SS}^{-1}M_{S\overline{S}}M_{\overline{S}\overline{S}}^{-1}M_{S\overline{S}}^T$ as $(S,\overline{S})$ ranges over all partitions of the rows/columns, this gives that the weak-conformality of $M$ and $M^{-1}$ are the same.
\end{proof}

\section{The Inner Product (Hodge) Laplacian}\label{S:IPL}
As our approach is inspired by the work of Horak and Jost defining Hodge Laplacians for a weighted simplicial complex~\cite{Horak:WeightedLaplcian}, we briefly review the necessarily simplicial homology background and the Horak and Jost simplicial Laplacian.\footnote{We note that the discussion here does not exactly correspond with that in \cite{Horak:WeightedLaplcian} as their exposition is focused on coboundary maps rather than boundary maps.  However, the essential ideas and the Laplacians defined on each of the chain groups are the same.  We choose to work with boundary operators as in applications the boundary of the $i$-faces tends to be more directly interpretable. }
An \emph{abstract simiplicial complex} $K$ over a finite set $X$ is a subset of $2^X - \set{\varnothing}$ such that if $A \in K$ and $B \in 2^X - \set{\varnothing}$ such that $B \subset A$, then $B \in K$.  The elements of $K$ are called its \emph{faces} and the maximal faces with respect the inclusion ordering are called the \emph{facets} of the simplicial complex.  The dimension of any face $A \in K$, is $\size{A} - 1$ and the dimension of the simplicial complex is the maximum of the dimension of all the faces.  For any fixed $i$, we denote the set of  faces of dimension $i$, $i$-faces for short, by $S_i$.  The formal linear combinations of elements of $S_i$ is a vector space, which is referred to as the \emph{$i^{\textrm{th}}$ chain group} and denoted $C_i(K,\R).$  By fixing an arbitrary ordering of the ground set $X$, the dimension $i$ faces of $K$ can be uniquely identified with elements of $X^{i+1}$ which agree with the prescribed ordering.  We will denote this form of the elements of $K$ by $(v_0,\ldots,v_{i})$ and denote by $(v_0,\ldots,\hat{v}_j,\ldots,v_{i})$ the face of $K$ formed by removing the $j^{th}$ component of $(v_0,\ldots, v_{i})$.  With this notation in hand, we define \emph{$i^{\textrm{th}}$ (signed) boundary map}, $\boundary_i$ as the unique function (up to ordering of $X$) from $C_i(K,\R)$ to $C_{i-1}(K,\R)$ formed by linearly extending the mapping $(v_0,\ldots,v_i) \mapsto \sum_{j} (-1)^j (v_0,\ldots,\hat{v}_j,\ldots,v_{i})$.  It is easy to see that under these maps, the action of first removing $v_j$ and then $v_k$ has the opposite sign as $v_k$ and then $v_j$.  Thus, $\partial_{i-1}\partial_i$ is identically zero.  A similar statement holds for the co-boundary maps $\set{\boundary_i^*},$ where $\boundary^*$ is the adjoint of the linear operator $\boundary.$

The collection of chain groups and boundary maps form a {\it chain complex}, where composition of consecutive maps is the zero map, for the simplicial complex, 
\[ 0= C_{-1}(K,\R) \xleftarrow{\boundary_0} C_{0}(K,\R) \cdots C_{k-2}(K,\R) \xleftarrow{\boundary_{k-1}} C_{k-1}(K,\R) \xleftarrow{\boundary_{k}} C_{k}(K,\R).\]
 From this chain complex, the $i^{\textrm{th}}$ Hodge Laplacian for the simplicial complex can be defined as 
\[\mathcal{L}_i  = \boundary_{i}^*\boundary_{i} + \boundary_{i+1}\boundary_{i+1}^*. \]   Further, by specifying an ordering of each of the sets of $i$-faces, the boundary maps can be written as $B_i \in \set{-1,0,1}^{\size{S_{i-1}} \times \size{S_i}}$ and the Hodge Laplacian can be written as $\mathcal{L}_i = B_i^TB_i + B_{i+1}B_{i+1}^T.$\footnote{From this point on, we will ignore the need to specify on an ordering on the $i$-faces to define the appropriate matrix and assume that every collection of $i$-faces has such an ordering and that it is consistently applied wherever appropriate.}  In a similar fashion a weight function on $K$, $w \colon K \rightarrow \R^+$ can be written as a collection of diagonal matrices $W_i$ corresponding to each of the non-empty collection of faces of $K$.  Using this framework, Horak and Jost define the collection of Hodge Laplacians on the weighted simplicial complex as
\[ \mathcal{L}_i(w) = B_i^TW_{i-1}^{-1} B_i W_i + W_i^{-1} B_{i+1} W_{i+1} B_{i+1}^T.\]

It is worth mentioning that while the $\mathcal{L}_i(w)$'s are not (in general) Hermitian since the matrices are not symmetric, the associated matrix $W_{i}^{\nicefrac{1}{2}}\mathcal{L}_i(w)W_i^{-\nicefrac{1}{2}}$ is positive semi-definite and co-spectral with $\mathcal{L}_i(w)$, so the spectrum of $L_i(w)$ is real and non-negative. 

As Horak and Jost note in passing, the weight matrix $W_i$ may be thought of as defining an inner product space on the chain group $C_i(K,\R)$ where the faces of dimension $i$ are orthogonal, but not necessarily orthonormal.  We generalize their approach by considering an arbitrary inner product on the chain complex, and in the process, recover a symmetric alternative to their Laplacian for a weighted simplicial complex.  To this end, let $\Z_i(K,\R)$ be an arbitrary real inner product space\footnote{Throughout we will assume real inner product spaces, but we this can be trivially generalized to complex inner product spaces.} over the vector space $C_i(K,\R)$.  
Since $\Z_i$ is finite dimensional, there is some symmetric positive definite matrix $M_i$ (in the basis given by $i$-faces) such that the $\inner[\Z_i]{x,y} = x^TM_iy$. Since $M_i$ is positive definite there is an invertible and symmetric matrix $Q_i$ such that $M_i = Q_i^TQ_i = Q_iQ_i$, for instance if singular value decomposition of $M_i = U_i^T\Sigma_i^2U_i$, then $Q_i$ may be defined as $U_i^T\Sigma_iU_i.$  We note that $Q_i : \Z_i \rightarrow C_i(K,\R)$ is an isometry, that is, for any $x,y \in \Z_i$, we have that $\inner[\Z_i]{x,y} = \inner{Q_ix,Q_iy}.$ 

Now let $\boundary_i \colon C_i \rightarrow C_{i-1}$ be the standard simplicial boundary maps, and define for an inner product space $\Z_i$ the \emph{inner product boundary map} $\zeta_i : \Z_i \rightarrow \Z_{i-1}$ by $\zeta_i = Q_{i-1}^{-1} \boundary_i Q_{i}$.  Note that the inner product boundary maps are chosen so that the following diagram commutes:
\begin{center}
\begin{tikzcd}
C_{i-1}(K,\R)\arrow[d,xshift=0.5ex,left,"Q_{i-1}^{-1}"]
& C_i(K,\R) \arrow[l,"\boundary_i"']   \arrow[d,xshift=0.5ex,left,"Q_{i}^{-1}"]
&  C_{i+1}(K,\R)  \arrow[l,"\boundary_{i+1}"'] \arrow[d,xshift=0.5ex,left,"Q_{i+1}^{-1}"]   \\
\Z_{i-1}(K,\R) 
\arrow[u,xshift=-0.5ex,right,"Q_{i-1}"] 
& \Z_i(K,\R) \arrow[l,"\zeta_i"'] 
\arrow[u,xshift=-0.5ex,right,"Q_{i}"] 
&  \Z_{i+1}(K,\R) \arrow[l,"\zeta_{i+1}"'] \arrow[u,xshift=-0.5ex,right,"Q_{i+1}"]  
\end{tikzcd}.
\end{center}

Now we wish to define the \emph{inner product co-boundary maps}, $\set{\zeta_i^*}$. Paralleling the standard theory of simplicial homology, we view the $\zeta^*$ as the adjoint of $\zeta$ with respect to the appropriate inner product spaces.  Thus if the $B_i$ is the matrix representation of $\boundary_i$ (and hence $B_i^T$ is the matrix representation of the co-boundary map $\boundary_i^*$), then $\zeta_i$ has matrix representation $Q_{i-1}^{-1}B_iQ_i$.  Thus, the co-boundary map associated with $\zeta_i$, denoted $\zeta_i^*$ must satisfy that 
\[ \inner[\Z_{i-1}]{\zeta_ix,y} = \inner[\Z_i]{x,\zeta_i^*y} \]
for all $x \in \Z_i$, $y \in \Z_{i-1}$.  Abusing notation and using $\zeta_i$ and $\zeta_i^*$ interchangeably with their matrix notation, this gives that 
\begin{align*}
    x^TQ_i^TQ_i\zeta_i^*y &= \inner[\Z_i]{x,\zeta_i^*y} \\
    &= \inner[\Z_{i-1}]{\zeta_ix,y} \\
    &= x^T\zeta_i^TQ_{i-1}^TQ_{i-1}y \\
    &= x^T\paren{Q_{i-1}^{-1}B_iQ_i}^TQ_{i-1}^TQ_{i-1}y \\
    &= x^TQ_{i}^{T}B_i^TQ_{i-1}^{-T}Q_{i-1}^TQ_{i-1}y \\
    &= x^TQ_{i}^{T}B_i^TQ_{i-1}y.
\end{align*}
Thus, since $Q_i$ is invertible and this relation holds for all $x \in \Z_i$, $y \in \Z_{i-1}$ the matrix form of $\zeta_i^*$ is given by $Q_i^{-1}B_i^TQ_{i-1}$ and $\zeta_i^* = Q_i^{-1} \boundary_i^* Q_{i-1}$.  As a result, the following diagram commutes:
\begin{center}
\begin{tikzcd}
C_{i-1}(K,\R) \arrow[r,yshift=-0.5ex,"\boundary_i^*"']\arrow[d,xshift=0.5ex,left,"Q_{i-1}^{-1}"]
& C_i(K,\R) \arrow[l,yshift=0.5ex,"\boundary_i"']  \arrow[r,yshift=-0.5ex,"\boundary_{i+1}^*"'] \arrow[d,xshift=0.5ex,left,"Q_{i}^{-1}"]
&  C_{i+1}(K,\R)  \arrow[l,yshift=0.5ex,"\boundary_{i+1}"'] \arrow[d,xshift=0.5ex,left,"Q_{i+1}^{-1}"]   \\
\Z_{i-1}(K,\R) \arrow[r,yshift=-0.5ex,"\zeta_i^*"'] 
\arrow[u,xshift=-0.5ex,right,"Q_{i-1}"] 
& \Z_i(K,\R) \arrow[l,yshift=0.5ex,"\zeta_i"'] \arrow[r,yshift=-0.5ex,"\zeta_{i+1}^*"']
\arrow[u,xshift=-0.5ex,right,"Q_{i}"] 
&  \Z_{i+1}(K,\R) \arrow[l,yshift=0.5ex,"\zeta_{i+1}"'] \arrow[u,xshift=-0.5ex,right,"Q_{i+1}"]  \\
\end{tikzcd}
\end{center}

It is worth mentioning at this point that the inner product boundary and co-boundary operators satisfy many of the anticipated properties of a boundary or co-boundary operator. For example; 
\begin{lemma}\label{L:doubleB}
Let $\set{\zeta_i}$ and $\set{\zeta_{i}^*}$ be a collection of inner product boundary and co-boundary maps.  Then $\zeta_{i-1}  \zeta_i = 0 = \zeta_i^*  \zeta_{i+1}^*$
\end{lemma}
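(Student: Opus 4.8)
The plan is to reduce both identities to the classical homological fact that $\boundary_{i-1}\boundary_i = 0$ (equivalently, in matrix form, $B_{i-1}B_i = 0$), which is recorded above. The only new feature here is the conjugation by the isometries $Q_i$, and the key observation is that when two consecutive inner product boundary maps are composed, the interior isometry factors telescope and cancel, leaving a classical boundary product sandwiched between two invertible matrices. Since conjugation by invertible matrices sends the zero map to the zero map, the vanishing follows immediately.

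Concretely, I would substitute the matrix representations $\zeta_i = Q_{i-1}^{-1}B_iQ_i$ and $\zeta_{i-1} = Q_{i-2}^{-1}B_{i-1}Q_{i-1}$ into the composite. Multiplying, the middle factor $Q_{i-1}Q_{i-1}^{-1} = I$ drops out, so that $\zeta_{i-1}\zeta_i$ has matrix $Q_{i-2}^{-1}B_{i-1}B_iQ_i$. Because $B_{i-1}B_i$ is the matrix form of $\boundary_{i-1}\boundary_i$, which is identically zero, and because $Q_{i-2}^{-1}$ and $Q_i$ are invertible, the product is the zero matrix; hence $\zeta_{i-1}\zeta_i = 0$.

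For the coboundary identity I would take the cleanest route and pass to adjoints: since $\zeta_{i-1}\zeta_i = 0$ as a map between inner product spaces, its adjoint $\paren{\zeta_{i-1}\zeta_i}^* = \zeta_i^*\zeta_{i-1}^*$ is also zero, and reindexing gives the claimed relation for consecutive coboundaries. Equivalently, one can repeat the matrix computation directly: using $\zeta_i^* = Q_i^{-1}B_i^TQ_{i-1}$, the composite telescopes to $Q_{i+1}^{-1}B_{i+1}^TB_i^TQ_{i-1}$, and $B_{i+1}^TB_i^T = \paren{B_iB_{i+1}}^T = 0$ since $\boundary_i\boundary_{i+1} = 0$. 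I do not expect any genuine obstacle here; the only thing to be careful about is the order-of-composition and indexing bookkeeping, since the coboundary arrows run in the opposite direction to the boundary arrows, so one must make sure the matrix product is formed in the order matching the diagram $\Z_{i-1} \xrightarrow{\zeta_i^*} \Z_i \xrightarrow{\zeta_{i+1}^*} \Z_{i+1}$ before invoking the classical vanishing.
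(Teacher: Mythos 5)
Your proposal is correct and matches the paper's proof essentially verbatim: both arguments substitute the matrix forms $\zeta_i = Q_{i-1}^{-1}B_iQ_i$, cancel the interior factor $Q_{i-1}Q_{i-1}^{-1}$, and invoke $\boundary_{i-1}\boundary_i = 0$, with the coboundary identity handled by the symmetric computation (the paper simply says ``a similar argument holds,'' which is exactly your telescoping of the transposes, and your adjoint shortcut is an equally valid phrasing of the same fact). No gaps; the indexing care you flag is the only subtlety, and you handle it correctly.
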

\begin{proof}
We note that 
\[ \zeta_{i-1}  \zeta_i = Q_{i-2}^{-1}  \boundary_{i-1}  Q_{i-1}  Q_{i-1}^{-1}  \boundary_i  Q_i = Q_{i-2}^{-1}  \boundary_{i-1}  \boundary_i  Q_i.\]
As $\boundary_{i-1} \boundary_i = 0$, it immediately follows that $\zeta_{i-1} \zeta_i = 0$.  A similar argument holds for $\zeta_i^* \zeta_{i+1}^*.$
\end{proof}

At this point, we have a natural generalization of the Hodge Laplacian which incorporates inner product spaces, namely
\[ \zeta_i^*\zeta_i + \zeta_{i+1}\zeta_{i+1}^* = Q_i^{-1} \boundary_i^* \boundary_i  Q_{i} + Q_i^{-1}  \boundary_{i+1}  \boundary_{i+1}^* Q_{i}.\] 
However, on further inspection this generalization has some undesirable properties; it is, in general, not symmetric and hence non-Hermitian, because of the lack of symmetry, even in the graph case there is no choice of inner product which recovers the normalized Laplacian, and finally, as $\zeta^*_i \zeta_i + \zeta_{i+1}\zeta_{i+1}^* = Q_i^{-1} \mathcal{L}_i Q_i$, it is similar to and has the same spectrum as the standard Hodge Laplacian and hence the inner product structure has minimal impact.  

We note however, that the matrix adjoint and the matrix transpose coincide for the standard inner product space, while they do not coincide for an arbitrary inner product space.  Thus, interpreting the Hodge Laplacian instead as 
\[ \mathcal{L}_i = \boundary_i^T\boundary_i + \boundary_{i+1}\boundary_{i+1}^T \] yields \[Q_i  \boundary_i^*  Q_{i-1}^{-2}  \boundary_i  Q_i +Q_{i}^{-1}  \boundary_{i+1}  Q_{i+1}^{2}  \boundary_{i+1}^*  Q_i^{-1} \] as a potential inner product Laplacian.  We note that this matrix is Hermitian and positive semi-definite and is not similar to the standard Hodge Laplacian. Additionally, by \Cref{L:doubleB} this framing aligns with the linear algebraic viewpoint on the Hodge Lapalican as discussed by Lim~\cite{lim2020hodge}.  Thus, we focus the remainder of this work on the following choice of Laplacian for a simplicial complex equipped with inner product spaces.

\begin{define}[Inner Product (Hodge) Laplacian] 
Let $K$ be an abstract simplicial complex of dimension $k$.  Let $\set{\boundary_i}$ be the boundary maps between the chain complexes and let   $\set{Q_i}_{i=0}^{k}$ be a collection of symmetric positive definite matrices over the elements of the chain complexes $C_i(K,\R)$.  For the collection of inner product spaces defined by $\set{Q_i^2}$ the  $i^{\textrm{th}}$ \emph{inner product (Hodge) Laplacian} is defined as 
\[ \IPL[i] = Q_i  \boundary_i^*  Q_{i-1}^{-2} \boundary_i  Q_i +Q_{i}^{-1} \boundary_{i+1}  Q_{i+1}^{2} \boundary_{i+1}^*  Q_i^{-1}.\]
\end{define}

It is worth noting that if we consider the Hodge Laplacian as defined in terms of the co-boundary operators, i.e. $\mathcal{L}_i = \boundary_i^* \paren{\boundary_i^*}^T + \paren{\boundary_{i+1}^*}^T\boundary_{i+1}^*$, the resulting inner product Laplacian would be defined by 
\[ Q_i^{-1}  \boundary_i^* Q_{i-1}^2  \boundary_i  Q_{i}^{-1} + Q_i  \boundary_{i+1} Q_{i+1}^{-2}  \boundary_{i+1}^*  Q_i.\]
As this is simply $\IPL[i]$ for the inner products given by $\set{Q_i^{-2}}$ (as opposed to $\set{Q_i^2}$), there is no essential loss of generality in focusing only on the inner product Laplacian generated by boundary maps.  

Perhaps unsurprisingly, the inner product Laplacian also yields a Hodge decomposition for $\R^n$.  More concretely, we have:
\begin{lemma}
Let $K$ be a $k$-dimensional simplicial complex, let $\set{\boundary_i}_{i=1}^{k}$ define the boundary maps of the chain complex, and let $\set{Q_i^2}_{i=0}^{k}$ define inner product spaces on each of the chain groups.  If the set of $i$-faces of $K$ has size $n$, then 
\[ \R^n = \IM\paren{Q_i^{-1}\boundary_i^*Q_{i-1}} \oplus \Ker\paren{\IPL[i]} \oplus \IM\paren{Q_i\boundary_{i+1}Q_{i+1}^{-1}}.\]
\end{lemma}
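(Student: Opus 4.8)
The plan is to reduce the statement to a purely linear-algebraic claim about three subspaces of $\R^n$ and then assemble the direct-sum decomposition. First I would record the factorization of the Laplacian. Writing the matrix forms $\zeta_i = Q_{i-1}^{-1}B_i Q_i$ and $\zeta_{i+1} = Q_i^{-1}B_{i+1}Q_{i+1}$ (with $B_i$ the matrix of $\boundary_i$, so $B_i^T$ is the matrix of $\boundary_i^*$), a direct computation gives $\IPL[i] = \zeta_i^T\zeta_i + \zeta_{i+1}\zeta_{i+1}^T$, the transpose being the ordinary one. Since $\IPL[i]$ is positive semi-definite, the identity $v^T\IPL[i]v = \norm{\zeta_i v}^2 + \norm{\zeta_{i+1}^T v}^2$ shows $\Ker\paren{\IPL[i]} = \Ker(\zeta_i)\cap\Ker(\zeta_{i+1}^T)$. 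The two outer summands in the statement are $U := \IM\paren{Q_i^{-1}\boundary_i^* Q_{i-1}} = Q_i^{-1}\IM(B_i^T)$ and $W := \IM\paren{Q_i\boundary_{i+1}Q_{i+1}^{-1}} = Q_i\IM(B_{i+1})$, the reductions holding because $Q_{i-1}$ and $Q_{i+1}$ are invertible.

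Next I would dispatch the easy structural facts, whose only inputs are the standard orthogonality $\IM(B_i^T)\perp\IM(B_{i+1})$ (from $B_iB_{i+1}=0$, cf.\ \Cref{L:doubleB}) and the positive-definiteness of $Q_i$. For $U\cap\Ker\paren{\IPL[i]}$: if $v = Q_i^{-1}B_i^Tw$ is in the kernel then $\zeta_i v = 0$, i.e.\ $B_iQ_iv = B_iB_i^Tw = 0$, so $\norm{B_i^Tw}^2 = w^TB_iB_i^Tw = 0$ and $v=0$. Symmetrically $v = Q_iB_{i+1}u \in W\cap\Ker\paren{\IPL[i]}$ forces $B_{i+1}^TB_{i+1}u = 0$, hence $v=0$. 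For $U\cap W$: a common vector $v = Q_i^{-1}a = Q_i b$ with $a\in\IM(B_i^T)$, $b\in\IM(B_{i+1})$ gives $a = Q_i^2 b$, whence $0 = a^Tb = b^TQ_i^2 b$ (the first equality by orthogonality), forcing $b=0$ and $v=0$. Finally I would count dimensions: $\dim U = \operatorname{rank}(B_i)$, $\dim W = \operatorname{rank}(B_{i+1})$, and since $\IM(\zeta_i^T)\perp\IM(\zeta_{i+1})$ one has $\dim\Ker\paren{\IPL[i]} = n - \operatorname{rank}(B_i) - \operatorname{rank}(B_{i+1})$, so the three dimensions sum to $n$.

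The hard part is promoting these facts to an actual direct-sum decomposition $\R^n = U\oplus\Ker\paren{\IPL[i]}\oplus W$. For three subspaces, pairwise-trivial intersection together with a matching dimension count is \emph{not} enough to conclude the sum is direct and fills $\R^n$; one must separately establish, say, $(U+W)\cap\Ker\paren{\IPL[i]} = 0$, and this is exactly where the $Q_i$-exponents of the outer summands intervene. Testing a vector $Q_i^{-1}a + Q_i b$ (with $a\in\IM(B_i^T)$, $b\in\IM(B_{i+1})$) against the kernel conditions $v\perp Q_i\IM(B_i^T)$ and $v\perp Q_i^{-1}\IM(B_{i+1})$ reduces, on a single orthogonal triple of directions drawn from $\IM(B_i^T)$, $\IM(B_{i+1})$, and the harmonic space, to the scalar equation $A_{12}(A^{-1})_{12} = 1$ with $A = Q_i^2$, and a positive-definite $A$ can satisfy this. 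Thus with the outer exponents exactly as written, the three-fold sum can fail to be direct precisely when $\Ker\paren{\IPL[i]}$ is nonzero; this final directness step, rather than any of the preceding verifications, is the genuine obstacle.

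Consequently I expect the clean route to a bona fide decomposition is to take the outer summands to be their orthogonal Hodge counterparts $\IM(\zeta_i^T) = \IM\paren{Q_i\boundary_i^* Q_{i-1}^{-1}}$ and $\IM(\zeta_{i+1}) = \IM\paren{Q_i^{-1}\boundary_{i+1}Q_{i+1}}$. For these, $\zeta_i\zeta_{i+1}=0$ yields $\IM(\zeta_i^T)\perp\IM(\zeta_{i+1})$ directly, both are orthogonal to $\Ker\paren{\IPL[i]} = \IM\paren{\IPL[i]}^\perp = \paren{\IM(\zeta_i^T)\oplus\IM(\zeta_{i+1})}^\perp$, and the orthogonal three-fold sum is then immediate. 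In summary, the pairwise-intersection and dimension steps above go through verbatim for the as-written summands, but closing the directness step appears to force the two outer $Q_i$-exponents to be inverted relative to the statement.
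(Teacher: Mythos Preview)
Your analysis tracks the paper's own argument closely: both reduce to the factorization $\IPL[i]=\zeta_i^T\zeta_i+\zeta_{i+1}\zeta_{i+1}^T$, both use $B_iB_{i+1}=0$ to get orthogonality of the two outer summands, and both identify $\Ker\paren{\IPL[i]}=\Ker(\zeta_i)\cap\Ker(\zeta_{i+1}^T)$. The divergence is at the last step. The paper's proof asserts that $\IM(\zeta_i^T)^\perp\cap\IM(\zeta_{i+1})^\perp$ equals $\IM\paren{Q_i^{-1}\boundary_i^*Q_{i-1}}^\perp\cap\IM\paren{Q_i\boundary_{i+1}Q_{i+1}^{-1}}^\perp$, i.e.\ that $\IM(\zeta_i^T)=\IM\paren{Q_i^{-1}\boundary_i^*Q_{i-1}}$ and $\IM(\zeta_{i+1})=\IM\paren{Q_i\boundary_{i+1}Q_{i+1}^{-1}}$. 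But $\zeta_i^T=Q_iB_i^TQ_{i-1}^{-1}$, so $\IM(\zeta_i^T)=Q_i\IM(B_i^T)$, whereas $\IM\paren{Q_i^{-1}\boundary_i^*Q_{i-1}}=Q_i^{-1}\IM(B_i^T)$; likewise for the other summand. These coincide only when $Q_i^2$ preserves $\IM(B_i^T)$ and $\IM(B_{i+1})$, which is not assumed. So the paper's proof is making precisely the identification you flag as problematic.

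Your diagnosis of the obstruction is correct and can be made fully concrete. In the three-dimensional test case $\IM(B_i^T)=\mathrm{span}(e_1)$, $\IM(B_{i+1})=\mathrm{span}(e_2)$, the condition $(U+W)\cap\Ker\paren{\IPL[i]}\neq 0$ reduces (via the cross-product identity or your direct computation) to $A_{12}(A^{-1})_{12}=1$ with $A=Q_i^2$. This \emph{is} attainable by a positive-definite $A$: taking $A$ with $A_{11}=A_{22}=A_{33}=t$, $A_{12}=1$, $A_{13}=A_{23}=\sqrt{t^3/(2t-1)}$ for $t>1$ gives $\det A=\tfrac{t(t-1)^2}{2t-1}>0$ and all leading minors positive, while forcing the equality. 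Thus the three-fold sum as written need not be direct, and the paper's final step does not go through in general. Your proposed repair---replacing the outer summands by $\IM(\zeta_i^T)=\IM\paren{Q_i\boundary_i^*Q_{i-1}^{-1}}$ and $\IM(\zeta_{i+1})=\IM\paren{Q_i^{-1}\boundary_{i+1}Q_{i+1}}$---is exactly what makes the decomposition orthogonal, and is the version the paper's own chain of equalities actually supports.
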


\begin{proof}
Noting that, by \Cref{L:doubleB}, we have that
\[ \paren{Q_i^{-1} \boundary_i^* Q_{i-1}}^T \paren{Q_i \boundary_{i+1}Q_{i+1}^{-1}} = \zeta_i \zeta_{i+1} = 0.\] 
Thus $\IM\paren{Q_i^{-1}\boundary_i^*Q_{i-1}}$ and $\IM\paren{Q_i\boundary_{i+1}Q_{i+1}^{-1}}$ are orthogonal subspaces, and thus it remains to show the set of vectors orthogonal to $\IM\paren{Q_i^{-1}\boundary_i^*Q_{i-1}} \oplus \IM\paren{Q_i\boundary_{i+1}Q_{i+1}^{-1}}$ is exactly the kernel of $\IPL[i]$. To that end, we observe that 
\begin{align*} \Ker\paren{\IPL[i]} &= \Ker\paren{\zeta_i^T\zeta_i + \zeta_{i+1}\zeta_{i+1}^T}
= \Ker(\zeta_i) \cap \Ker(\zeta_{i+1}^T) \\ &= \IM\paren{\zeta_i^T}^{\perp} \cap \IM\paren{\zeta_i}^{\perp} = \IM\paren{Q_i^{-1}\boundary_i^*Q_{i-1}}^{\perp} \cap \IM\paren{Q_i\boundary_{i+1}Q_{i+1}^{-1}}^{\perp}, 
\end{align*}
as desired.
\end{proof}

As an immediate consequence, the non-zero spectrum for $\IPL[i]$ can be calculated independently from the non-zero spectra of $Q_i  \boundary_i^*  Q_{i-1}^{-2} \boundary_i  Q_i$  and $Q_{i}^{-1} \boundary_{i+1}  Q_{i+1}^{2} \boundary_{i+1}^*  Q_i^{-1}$.  This immediately motivates the following definition.

\begin{define}[(Boundary) Semi-Hodge Laplacian]
Let $V$ and $E$ be finite sets, with associated inner products defined by $M_V = Q_V^2$ and $M_E = Q_E^2$ and let $B \in \set{-1,0,1}^{\size{V} \times \size{E}}$.  We define the \emph{semi-Hodge Laplacian} for $B$, $M_V$ and $M_E$ by $\sHL = Q_{V}^{-1}B M_E B^T Q_{V}^{-1}$.  If $B$ represents the signed boundary matrix from a simplicial complex, we will refer to this matrix as a \emph{boundary semi-Hodge Laplacian}. 
\end{define}

We note that the entry-wise absolute value of $B$ may be thought of as the incidence matrix of a hypergraph with vertex set $V$ and edge-set $E$, which we will denote by $\mathcal{H}_B$.  We will thus abuse notation and refer to properties of $B$ by the appropriate hypergraph terminology.  For instance, for a fixed $v \in V$ the set $e \in E$ such that $B_{ve} \neq 0$ are the edges incident to $v$ and are denoted $E(v)$.  Thus, the degree of $v$ is $\size{E(v)}$ and $\norm[1]{B}$ is the maximum degree of $\mathcal{H}_B$, which we will denote by $\Delta_B$.  Now if $V$ and $E$ are equipped with inner product spaces, we can think of $\mathcal{H}_B$ as a form of a weighted hypergraph where the weight of a set $S$ is given by the norm-squared in the appropriate inner product space.  More concretely, if we let $M_E$ and $M_V$ denote the respective inner product spaces of $E$ and $V$, then for $e \in E$, we can define $w_e = \one_e^TM_E\one_e$ and similarly for $v \in V$.  While we employ the standard notational abuse where $w_S = \sum_{s \in S} w_s$, we caution that, in general (i.e., if the weak non-conformality of the inner product space is not zero), $w_S \neq \one_S^TM\one_S$.  As it will be useful to reference the ``size" of the edges incident with a vertex with respect to the inner product space on the edges, we will denote $\one_{E(v)}^TM_E\one_{E(v)}$ by $d_v$, aligning with standard notation for the degree of a vertex while reserving $\deg(v)$ for the combinatorial degree of a vertex, i.e.\ $\deg(v) = \size{E(v)}$.

Since the semi-Hodge Laplacian depends on the both $M_V^{-1}$ and $M_E$ it is necessary to quantify the difference in scales between $M_V$ and $M_E$.  One natural approach is to consider the ratio between the spectral radii, which immediately yields an upper bound based on the maximum degree of the underlying hypergraph.  However, by incorporating the hypergraph structure into our definition, we can develop a more refined upper bound on the spectral radius of the semi-Hodge Laplacian.  To that end, we define the compataiblity between two inner product spaces which defines the scale based on the neighborhood structure of the hypergraph.
\begin{define}[Compatibility]
If $M_V$ and $M_E$ are inner products spaces on the vertices and edges of a (hyper)graph $G = (V,E)$, we will say that the inner product spaces are \emph{$\omega$-compatible} if for all $v \in V$  $\one_{E(v)}^TM_E\one_{E(v)} \leq \omega \one_v^TM_V\one_V$.  Furthermore, if this inequality is tight for all $v \in V$, we will say that $M_E$ and $M_V$ are \emph{perfectly $\omega$-compatible}.
\end{define}

With this notation in hand, we can provide a upper bound on the spectral radius of the semi-Hodge Laplacian.  It is worth noting that when applied to the combinatorial and normalized Laplacians (and their weighted variants) this result gives the standard easy upper bounds on the spectral radius~\cite{brouwer2011spectra,chung1997spectral}.  However, while the standard proofs of these upper bounds rely on the Gershgorin disk theorem to contain the spectrum, our approach exploits the compatibility between the edge and vertex inner product spaces.

\begin{lemma}\label{L:radius}
Let $V$ and $E$ be finite sets with associated inner products $M_V = Q_V^2$ and $M_E = Q_E^2$ and let $B \in \set{-1,0,1}^{\size{V} \times \size{E}}$.  If $M_V$ and $M_E$ are $\omega$-compatible inner product spaces which are weakly $\rho_V$- and $\rho_E$-conformal, respectively, then the semi-Hodge Laplacian has spectral radius at most 
\[  
\frac{1+\rho_V}{1-\rho_V}\paren{\frac{1+\rho_E}{1-\rho_E}}^2 r \omega\]
where $r$ is the rank of $\mathcal{H}_B$.
\end{lemma}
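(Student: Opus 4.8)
The plan is to realize the spectral radius as a generalized Rayleigh quotient and then systematically trade each off-diagonal interaction for a diagonal quantity, paying a conformality factor each time, before collapsing everything through the $\omega$-compatibility hypothesis. Since $Q_V$ is symmetric, $\sHL = Q_V^{-1}BM_EB^TQ_V^{-1}$ is symmetric and positive semi-definite, so its spectral radius equals $\max_{x \neq 0} \frac{x^T\sHL x}{x^Tx}$. Substituting $y = Q_V^{-1}x$, so that $x^Tx = y^TM_Vy$, and writing $z = B^Ty$, this becomes $\max_{y \neq 0} \frac{z^TM_Ez}{y^TM_Vy}$, which is exactly the quantity I must bound from above.

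First I would apply \Cref{L:trace} to the edge inner product matrix $M_E$ to pass from the quadratic form $z^TM_Ez$ to its diagonal surrogate, obtaining $z^TM_Ez \leq \frac{1+\rho_E}{1-\rho_E}\sum_e z_e^2 w_e$, where $w_e = (M_E)_{ee}$. Next, since $z_e = \sum_v B_{ve}y_v$ is a signed sum over the $\size{e}$ vertices incident to $e$ with coefficients in $\set{-1,0,1}$, the Cauchy--Schwarz inequality gives $z_e^2 \leq \size{e}\sum_{v \in e} y_v^2 \leq r \sum_{v \in e} y_v^2$, where $r$ is the rank of $\mathcal{H}_B$. Interchanging the order of summation then yields $\sum_e z_e^2 w_e \leq r \sum_v y_v^2 w_{E(v)}$, where $w_{E(v)} = \sum_{e \in E(v)} w_e$.

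The remaining task is to convert $w_{E(v)}$ into the vertex diagonal $(M_V)_{vv}$, which is where the hypergraph geometry enters. Applying \Cref{L:trace} a second time --- now to the principal submatrix of $M_E$ restricted to $E(v)$, which is again $\rho_E$-conformal, since principal minors inherit conformality as noted in the proof of \Cref{L:trace} --- gives $w_{E(v)} = \trace\paren{M_E|_{E(v)}} \leq \frac{1+\rho_E}{1-\rho_E}\one_{E(v)}^TM_E\one_{E(v)} = \frac{1+\rho_E}{1-\rho_E} d_v$. Invoking $\omega$-compatibility bounds $d_v \leq \omega (M_V)_{vv}$, and a final application of \Cref{L:trace} to $M_V$ supplies $\sum_v y_v^2 (M_V)_{vv} \leq \frac{1+\rho_V}{1-\rho_V} y^TM_Vy$. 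Multiplying the accumulated factors gives exactly $\frac{1+\rho_V}{1-\rho_V}\paren{\frac{1+\rho_E}{1-\rho_E}}^2 r\omega$.

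I expect the main obstacle to be the bookkeeping that produces the squared edge-conformality factor: one must recognize that \Cref{L:trace} is invoked twice for the edge space --- once globally to diagonalize $z^TM_Ez$, and once locally on each neighborhood submatrix $M_E|_{E(v)}$ to reconcile the diagonal edge-weight sum $w_{E(v)}$ with the genuine inner-product degree $d_v$ appearing in the compatibility definition. The Cauchy--Schwarz step contributes the rank $r$ cleanly, but keeping every conformality inequality oriented the same way (all upper bounds) so the factors multiply correctly is the delicate point.
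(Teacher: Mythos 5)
Your proposal is correct and follows essentially the same route as the paper's own proof: the identical Rayleigh-quotient reduction, an application of \Cref{L:trace} to diagonalize the edge quadratic form, Cauchy--Schwarz contributing the rank $r$, a second edge-space application of \Cref{L:trace} on each neighborhood $E(v)$, and finally $\omega$-compatibility combined with \Cref{L:trace} on $M_V$ for the denominator. The only differences are cosmetic (the order in which compatibility and the vertex-space bound are invoked), so there is nothing to add.
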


\begin{proof}
    We note that since $\sHL$ is positive semi-definite, the spectral norm corresponds to the maximum of the Rayleigh quotient.  Since both $M_V$ and $M_E$ are weakly-$\rho$ conformal, we may apply \Cref{L:trace} to bound the Rayleigh quotient.  More concretely, we have 
    \[ \norm{\sHL} = \max_{z \neq 0} \frac{z^T M_V^{-\half} B^T M_E B M_V^{-\half} z}{z^Tz} 
    = \max_{\widehat{z} \neq 0} \frac{ \widehat{z}^T B^T M_E B \widehat{z}}{\widehat{z}^T M_V \widehat{z}}. \]
    By applying \Cref{L:trace}, the Cauchy-Schwarz inequality, and \Cref{L:trace} again, we have
    \begin{align*}
        \widehat{z}^T B^T M_E B \widehat{z} &\leq \frac{1+\rho_E}{1-\rho_E} \sum_{e} w_e \sum_{v \in e} (B_{ev}\widehat{z}_v)^2 \\ 
        &\leq \frac{1+\rho_E}{1-\rho_E} \sum_{e} w_e r\sum_{v \in e} \widehat{z}_v^2 \\
        &= r \frac{1+\rho_E}{1-\rho_E} \sum_v \widehat{z}_v^2\sum_{e \in E(v)} w_e \\ 
        &\leq r \paren{\frac{1+\rho_E}{1-\rho_E}}^2 \sum_v \widehat{z}_v^2 \one_{E(v)}^TM_E\one_{E(v)}.
    \end{align*} 
    Now applying \Cref{L:trace} to $\widehat{z}^TM_V\widehat{z}$, we can bound the denominator of the Rayleigh quotient below by $\frac{1-\rho_V}{1+\rho_V} \sum_{v} \widehat{z}_v^2 \one_v^TM_V\one_v$.  Observing that by the $\omega$-compatability of $M_V$ and $M_E$, $\one_{E(v)}^TM_E\one_{E(v)} \leq \omega \one_v^TM_V\one_v$, completes the proof.
\end{proof}

\hide{
\begin{proof}
  We note that since $\sHL$ is positive semi-definite, the spectral norm corresponds to the maximum of the Rayleigh quotient.  Furthermore, since both $M_V$ and $M_W$ are weakly-$\rho$ conformal, we may apply \Cref{L:trace} to simplify the Rayleigh quotient.  More concretely, we have 
\[ 
    \norm{\sHL} = \max_{z \neq 0} \frac{z^T M_V^{-\half} B^T M_E B M_V^{-\half} z}{z^Tz} 
    = \max_{\widehat{z} \neq 0} \frac{ \widehat{z}^T B^T M_E B \widehat{z}}{\widehat{z}^T M_V \widehat{z}}. \]
    Let $\set{s_e}$ be the set of basis vectors indexed by $e \in E$, then $B\widehat{z} = \sum_{e \in E} \sum_{v \in E} (B_{ev} \widehat{z}_v) s_e$ and so, by the triangle inequality and Cauchy-Schwarz
    \[ \widehat{z}^T B^T M_E B \widehat{z} \leq \sum_{e} \sqrt{ \paren{ \sum_{v \in e} B_{ev}\widehat{z}_v}^2 w_e} \leq \sum_e \sqrt{ r \paren{ \sum_{v \in e} \widehat{z}_v^2} w_e}. \]
    
    For the moment, suppose that $\sum_{v \in e} \widehat{z}_v^2 > 0$ for all $e$. Since the Rayleigh quotient is invariant under scaling, without loss of generality we may scale up $\widehat{z}$ such that $r \paren{\sum_{v \in e} \widehat{z}_v^2} s_e^TM_Es_e > 1$ for all $z$.  Thus, we have that
     \[ \widehat{z}^T B^T M_E B \widehat{z} \leq \sum_e r \paren{ \sum_{v \in e} \widehat{z}_v^2} s_e^TM_Es_e = r \sum_v \widehat{z}_v^2 \sum_{e \in E(v)} w_e \leq r \sum_v \widehat{z}_v^2 \frac{1+\rho_E}{1-\rho_E} \one_{E(v)}^TM_E\one_{E(v)}, \]
    where the last inequality follows by \Cref{L:trace}.  Now noting that \Cref{L:trace} also gives that  $z^TM_vz \geq \frac{1-\rho_V}{1+\rho_V} \widehat{z}_v^2 w_v$, combined with the $\omega$-compatibility of $M_V$ and $M_E$ immediately gives the desired result.  

    Now we turn to the case where $\sum_{v \in e} \widehat{z}_v^2 = 0$ for some edge $e$.  Note that this implies that $\widehat{z}_v = 0$ for all $v \in e$ and in particular, the corresponding term $\widehat{z}_v^2 w_v$ in the denominator of the Rayleigh quotient is zero.  Thus, we may restrict our attention to the support of $\widehat{z}$ and the above argument proceeds in a similar manner.
\end{proof}
}

\section{Existing Laplacians and the Inner Product Laplacian}
As briefly noted earlier, the normalized and combinatorial Laplacian and their weighted variants can be easily recovered by the appropriate choice of vertex and edge inner product.  Concretely, if the edge inner product matrix is the diagonal matrix of edge weights (which are one if the graph is unweighted) then using the identity or the diagonal weighted degree matrix for the vertex inner product, yields the combinatorial and normalized Laplacians, respectively.  The compatibility for these inner products is given by the maximum weighted degree and 1, and so \Cref{L:radius} yields the standard bounds on the spectral radius of the Laplacian.  It worth noting that while the signless Laplacian ($D + A$) and its normalized variant ($I + D^{-\half}AD^{-\half}$) are not obviously inner product Laplacians, they are given by a semi-Hodge Laplacian.  Specifically, if $H$ is the (unsigned) incidence matrix for a graph and $D$ is the diagonal matrix of degrees, the signless Laplacian is $HH^T$ and the normalized signless Laplacian is $D^{-\half}HH^TD^{-\half}$.  Again, the results provided by \Cref{L:radius} agree with the standard results bounding the spectral radius.  

\subsection{Laplacians for Higher-Order Structures}\label{SS:higher_order}

\hide{\sinan{List of hypergraph Laplacians for brainstorming. Let $H$ denote the incidence matrix:
\begin{enumerate}
    \item Signless Hypergraph Laplacian (Cardoso): $HH^T$
    \item Rodri guez's Laplacian: $D_v-HH^T$.
    \item Bolla's Laplacian: $D_v -HD_e^{-1}H^T$
    \item Zhou: $I-\frac{1}{2}D_v^{-1/2}HWH^TD_v^{-1/2}$ where $W$ is the diagonal matrix of given hyperedge weights.
    \item Mulas's hypergraph Laplacian for oriented hypergraphs: $I-D^{-1/2}AD^{-1/2}$ where $A_{ij}$ is number of hyperedges in which $i,j$ are anti-oriented minus number which are co-oriented.
    \item Carletti's RW Laplacian: see paper, horrid notation. 
    \item Chung's hypergraph Laplacian
    \item Hypergraph Laplacian based on Fan's digraph Laplacians
    \item Hypergraph Magnetic Laplacian
\end{enumerate}

We concluded worth mentioning Fan's, Mulas' and Bollas'. 
Point: if/how these can framed as a special case of IPL. 
}}

Over the last several decades there have been a variety of proposed methods to generalize the framework of spectral graph theory to the context of higher-order structures such as hypergraphs or simplicial complexes.  Broadly speaking, these fall into one of two classes: tensor-based methods and matrix-based methods.  While the tensor-based approach has significant theoretical appeal, the tensor spectral theory is significantly more complicated even in the case of tensors with significant symmetries~\cite{cooper2020adjacency, cooper2012spectra}. Thus, as a practical matter, there has been significantly more work around generating matrix representations of hypergraphs and simplicial complexes, including \cite{Bolla:Hypergraph, Carletti:RWhypergraph, chung1992laplacian, Horak:WeightedLaplcian, Mulas:Oriented,mulas2021spectral,Rodriguez:hypergraph,Zhou:hypergraph} and many others.  However,  many of these higher-order matrix-generalizations of both the normalized and combinatorial Laplacian can be interpreted as an inner product Laplacian with appropriate choices of inner products. 
For example, and as noted earlier, the weighted simplicial complex Laplacian of Horak and Jost~\cite{Horak:WeightedLaplcian} can be thought of as an inner product Laplacian when appropriately symmetrized.  Similarly, the Laplacian defined by Chung for $k$-uniform hypergraphs~\cite{chung1992laplacian}, while initially defined based on the combinatorial properties of the $k-1$ sets contained within the edges, can also be defined as a positive weighted combination of the up- and down-Laplacians for the $k-2$ faces in the simplicial complex formed by interpretting the maximal hyperedges as facets.  

The approach of Horak and Jost, as well as Chung, to hypergraph Laplacians is somewhat of an exception in that the Laplacian matrix is indexed by something other than the vertices of the hypergraph.  For example, Mulas and Jost~\cite{Mulas:Oriented}, introduce a hypergraph Laplacian for oriented hypergraphs $D^{-1}HH^T$ where $D$ is the diagonal matrix of vertex degrees and $H$ is the vertex-edge incidence matrix for the hypergraph where the signing of  $H$ corresponds to the input/output of the oriented hyperedge.  The appropriate symmetrization of this matrix is easily seen to be a semi-Hodge Laplacian.\footnote{We note that, as an operator, the Laplacian of Jost and Mulas is self-adjoint and that the symmetrization operation amounts to rescaling the basis which is used to provide the matrix representation.} More commonly, the vertex-edge incidence matrix is chosen to be unsigned and a Laplacian is defined by mimicking the form of the combinatorial Laplacian, that is $D - \tilde{D}HWH^T\tilde{D}$ where $D$, $\tilde{D}$, and $W$ are appropriately sized diagonal matrices with strictly positive diagonal.  The Laplacians of Bolla~\cite{Bolla:Hypergraph}, Rodriguez~\cite{Rodriguez:hypergraph}, and Zhou, Huang, and Sch{\"o}lkopf~\cite{Zhou:hypergraph} as well as symmetric random walk Laplacians of Carletti, Battiston, Cencetti, and Fanelli~\cite{Carletti:RWhypergraph} all fall into this general framework.  While not directly expressible as an inner product Laplacian or a semi-Hodge Laplacian because of the subtraction, we can see that these all correspond to an inner product Laplacian on the clique expansion of the hypergraph.

\begin{lemma}\label{L:hyper}
Let $\mathcal{H} = (V,E)$ be a hypergraph on $n$ vertices and $m$ edges.  Let $H \in \set{0,1}^{n \times m}$ be the vertex-edge incidence matrix for $\mathcal{H}$.  Suppose $L$ has the form $D - \tilde{D}HWH^T\tilde{D}$, where $D$, $\tilde{D}$, and $W$ are appropriately sized diagonal matrices with strictly positive diagonals. If there exists a strictly positive vector $\pi$ in the kernel of $L$, then $L$ can be represented as an inner product Laplacian for the clique expansion of $\mathcal{H}$.
\end{lemma}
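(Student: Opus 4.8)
The plan is to realize $L$ as a boundary semi-Hodge Laplacian $\sHL = Q_V^{-1} B M_E B^T Q_V^{-1}$ for the clique expansion $G$ of $\mathcal{H}$ — equivalently, as the (up-part of the) inner product Hodge Laplacian on the vertices of $G$. Here $B \in \set{-1,0,1}^{n \times N}$ is the signed incidence matrix of $G$, whose columns are indexed by the clique-expansion edges $(e,\set{u,v})$ with $\set{u,v} \subseteq e$, each column carrying a single $+1$ and a single $-1$ on its two endpoints. The two ingredients I need to supply are the vertex inner product $M_V = Q_V^2$ and the edge inner product $M_E$, and I would read both off directly from the data defining $L$.

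First I would use the strictly positive kernel vector $\pi$ to build the vertex inner product: let $Q_V$ be the diagonal matrix with $u$-th entry $\pi_u$, so $M_V = Q_V^2$ has diagonal $\pi_u^2$. The strict positivity of $\pi$ is exactly what makes this a legitimate positive definite inner product. For the edges I take $M_E$ to be the diagonal matrix assigning to the clique edge $(e,\set{u,v})$ the strictly positive weight $\pi_u \pi_v \tilde{d}_u \tilde{d}_v w_e$, where $\tilde d_u$ and $w_e$ are the corresponding diagonal entries of $\tilde D$ and $W$.

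The heart of the argument is then a short bookkeeping step. Computing off-diagonal entries and using that each clique edge joining $u$ and $v$ contributes a sign $(+1)(-1) = -1$, I get for $u \neq v$
\[ \paren{\sHL}_{uv} = \frac{1}{\pi_u\pi_v}\paren{BM_EB^T}_{uv} = -\frac{1}{\pi_u\pi_v}\sum_{e \ni u,v}\pi_u\pi_v\tilde d_u\tilde d_v w_e = -\tilde d_u\tilde d_v\sum_{e\ni u,v}w_e = L_{uv}, \]
so $\sHL$ and $L$ agree off the diagonal. To promote this to full equality I would avoid computing the diagonal of either matrix and instead invoke the kernel. On one hand $L\pi = 0$ by hypothesis; on the other $\sHL\pi = Q_V^{-1}BM_EB^T(Q_V^{-1}\pi) = Q_V^{-1}BM_EB^T\one = 0$, since $Q_V^{-1}\pi = \one$ and every column of $B$ sums to zero so $B^T\one = 0$. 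Thus $\sHL$ and $L$ are symmetric matrices with identical off-diagonal parts and a common kernel vector $\pi$; their difference is therefore a diagonal matrix $\Delta$ with $\Delta\pi = 0$, and since every $\pi_u \neq 0$ this forces $\Delta = 0$ and hence $\sHL = L$.

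The main obstacle is structural rather than computational, and it is exactly where the hypothesis on $\pi$ is essential. The off-diagonal entries of $L$ come only from $\tilde D H W H^T \tilde D$ and are straightforward to reproduce; but the diagonal of $L$ absorbs the free matrix $D$ and cannot be matched entry by entry (for instance, a size-one hyperedge alters $D$ while contributing no clique edge at all). The existence of a strictly positive $\pi \in \Ker(L)$ resolves this in one stroke: it supplies the positive numbers defining $M_V$, and simultaneously certifies that the diagonal of $L$ is precisely the Laplacian-type row-sum diagonal shared by every semi-Hodge Laplacian with kernel $\pi$ — which is what lets the diagonal-difference argument close the proof without ever touching $D$ directly.
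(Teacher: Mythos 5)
Your proof is correct and is essentially the paper's own argument: the paper conjugates $L$ by $\Pi$ and shows $\Pi L \Pi$ agrees off-diagonal with $\tilde{H}\tilde{W}\tilde{H}^T$ for the clique expansion, then kills the diagonal difference using $\Pi L \Pi\one = \Pi L \pi = 0$ and $\tilde{H}^T\one = 0$ --- which is exactly your ``off-diagonals match, common kernel vector, diagonal difference annihilating a strictly positive vector must vanish'' step, just transported through the conjugation $Q_V = \Pi$. The only cosmetic difference is that you index clique edges by pairs $(e,\set{u,v})$, giving a multigraph, whereas the paper aggregates parallel edges into a single diagonal weight $\tilde{w}_{\set{u,v}}$; since your $M_E$ is diagonal, summing the weights of parallel edges yields the identical matrix on the simple clique expansion.
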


\begin{proof}
    For convenience of notation, we will let $d$, $\tilde{d}$, and $w$ be the vectors associated with the diagonals of $D$, $\tilde{D}$, and $W$, respectively.  Similarly, we will define $\Pi$ to be the diagonal matrix associated with $\pi$. Now consider the matrix $\tilde{L} = \Pi L \Pi$.  As $\Pi$ is diagonal, in order to show that $L$ is an inner product Laplacian for the clique expansion of $\mathcal{H}$, it suffices to instead show that $\tilde{L}$ is an inner product Laplacian for the clique expansion of $\mathcal{H}$. To that end, we first note that for $u \neq v$
    \[ \tilde{L}_{u,v} = - \pi_u \pi_v \tilde{d}_u\tilde{d}_v \sum_{e \in E} H_{u,e} w_e H_{v,e} = \tilde{w}_{\set{u,v}}. \]
    Thus $\tilde{L}_{u,v} \neq 0$ if and only if $\set{u,v}$ is in the clique expansion of $\mathcal{H}$.  Thus if $\tilde{H}$ is the signed boundary matrix of $\mathcal{H}$ and $\tilde{W}$ is an appropriately ordered diagonal matrix of the $w_{\set{u,v}}$, then $\tilde{L} - \tilde{H} \tilde{W} \tilde{H}^T$ is a diagonal matrix.  Furthermore, since by construction $\tilde{H}\tilde{W}\tilde{H}^T\one = 0$, and by definition $\tilde{L}\one = 0$, we have that $\tilde{L} - \tilde{H}\tilde{W}\tilde{H}^T = 0$.  This implies that $\tilde{L}$, and hence $L$, is an inner product Laplacian for the clique expansion of $\mathcal{H}$ as desired.
\end{proof}

The observation that many proposed vertex-indexed hypergraph Laplacians are either combinatorial or normalized Laplacians of the hypergraph's weighted clique expansion is not new~\cite{agarwal2006higher, agarwal2005beyond}. Indeed, Chitra and Raphael point to this observation to inspire their definition of ``edge-dependent vertex weighted" random walks as a means of capturing more hypergraph structure through non-reversible Markov chains~\cite{chitra2019random}.  Unsurprisingly, their approach yields a positive transition probability between $u$ and $v$ whenever there is an edge containing both $u$ and $v$, that is, whenever $\set{u,v}$ is in the clique expansion of $H$.  However, as noted in \cite{hayashi2020hypergraph}, the transition probability matrix for the edge-dependent vertex weighted random walks can be transformed into a Laplacian-like matrix via the combinatorial and normalized digraph Laplacians defined by Chung~\cite{chung2005laplacians}.  A similar approach as \Cref{L:hyper} yields that these digraph Laplacians can also be thought of as inner product Laplacians of the undirected graph formed by the relation $\set{u,v}$ is an edge if and only if $P_{uv} > 0$ or $P_{vu} > 0$.  In the case of the edge-dependent vertex weighted random walks, this undirected graph is precisely the clique expansion of the original hypergraph.

\begin{lemma}\label{L:digraph_lap}
    Let $P$ be the transition probability matrix for an ergodic Markov chain and $\Pi$ be the diagonal matrix associated with the limiting distribution.  The digraph Laplacian $L = \Pi - \frac{1}{2}\paren{\Pi P + P^T \Pi}$ and the normalized digraph Laplacian $\mathcal{L} = I - \frac{1}{2}\paren{\Pi^{\half}P\Pi^{-\half} + \Pi^{-\half}P^T\Pi^{\half}}$ are both inner product Laplacians on the associated undirected graph.
\end{lemma}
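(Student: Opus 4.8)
The plan is to follow the template of \Cref{L:hyper}: identify the off-diagonal entries of the digraph Laplacian with edge weights on the associated undirected graph, and then use an a priori kernel vector to pin down the diagonal. First I would record two structural facts about $L = \Pi - \half\paren{\Pi P + P^T\Pi}$. For $u \neq v$ its $(u,v)$ entry is $-\half\paren{\pi_u P_{uv} + \pi_v P_{vu}}$, which is manifestly symmetric in $u$ and $v$ and, since ergodicity forces $\pi > 0$, is strictly negative exactly when $P_{uv} > 0$ or $P_{vu} > 0$ -- that is, exactly on the edges of the associated undirected graph. Second, because $\pi$ is the stationary distribution we have $P\one = \one$ and $P^T\pi = \pi$, so $L\one = \pi - \half\paren{\pi + \pi} = 0$; thus $\one \in \Ker(L)$.

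Next I would set $w_{\set{u,v}} = \half\paren{\pi_u P_{uv} + \pi_v P_{vu}} > 0$ for each edge $\set{u,v}$ of the associated undirected graph, let $B$ be its signed vertex-edge incidence matrix, and let $\tilde{W}$ be the diagonal matrix of these weights. The matrix $B\tilde{W}B^T$ is the weighted combinatorial Laplacian of that graph: its off-diagonal $(u,v)$ entry is $-w_{\set{u,v}}$ (independent of the orientation chosen for $B$, since the endpoint signs always multiply to $-1$) and each of its rows sums to zero, so $B\tilde{W}B^T\one = 0$. Since $L$ and $B\tilde{W}B^T$ agree off the diagonal and both annihilate $\one$, their difference is a diagonal matrix $D'$ with $D'\one = 0$, forcing $D' = 0$ and hence $L = B\tilde{W}B^T$. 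Taking the vertex inner product $M_V = I$ and edge inner product $M_E = \tilde{W}$, this is exactly the semi-Hodge (hence inner product) Laplacian $\sHL = Q_V^{-1}BM_EB^TQ_V^{-1}$ of the associated undirected graph.

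For the normalized digraph Laplacian I would exploit the identity $\mathcal{L} = \Pi^{-\half}L\Pi^{-\half}$, which follows by distributing $\Pi^{-\half}$ through the definition of $L$. Substituting $L = B\tilde{W}B^T$ gives $\mathcal{L} = \Pi^{-\half}B\tilde{W}B^T\Pi^{-\half}$, which is the semi-Hodge Laplacian of the same undirected graph with the same edge inner product $M_E = \tilde{W}$ but vertex inner product $M_V = \Pi$ (so $Q_V = \Pi^{\half}$). Thus both $L$ and $\mathcal{L}$ are inner product Laplacians on the associated undirected graph, differing only in the choice of vertex inner product, mirroring the relationship between the combinatorial and normalized Laplacians.

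The calculations here are routine; the only points requiring care are the two places where ergodicity enters. The strict positivity of $\pi$ guarantees that the edge weights are positive and that the support of the off-diagonal entries is precisely the associated undirected graph, while the stationarity identity $P^T\pi = \pi$ is what places $\one$ in the kernel. I expect the kernel-matching step to be the conceptual crux, exactly as in \Cref{L:hyper}: without an a priori kernel vector one could not conclude that the diagonal of $L$ is determined by its off-diagonal entries, and it is the stationarity of $\pi$ that supplies this vector.
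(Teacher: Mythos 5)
Your proof is correct and takes exactly the approach the paper intends: the paper states \Cref{L:digraph_lap} without a separate proof, remarking only that ``a similar approach as \Cref{L:hyper}'' yields the result, and your argument is precisely that template carried out — match the off-diagonal entries to edge weights $w_{\set{u,v}} = \half\paren{\pi_u P_{uv} + \pi_v P_{vu}}$ on the associated undirected graph, then use the a priori kernel vector (here $\one$, supplied by stationarity $P^T\pi = \pi$, rather than a conjugation as in \Cref{L:hyper}) to force the diagonals to agree. The identification of $M_V = I$ versus $M_V = \Pi$ for the two Laplacians, via $\mathcal{L} = \Pi^{-\half}L\Pi^{-\half}$, correctly fills in the details the paper omits.
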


Finally, it is worth noting that while the signless hypergraph Laplacian, defined by Cardoso and Trevisan as $HH^T$ where $H$ is the (unsigned) vertex-edge incidence matrix~\cite{Cardoso:hypergraph}, is not an inner product Laplacian, much like the signless Laplacian it is trivially a semi-Hodge Laplacian for the hypergraph.

\subsection{Laplacian-like Spectra for Subgraphs}\label{SS:subgraphs}
While spectral graph theory approaches typically consider the spectrum of the entire graph, there are a few approaches which attempt to characterize the spectral behavior of a (induced) subgraph of the graph, viewing the entire graph as an ambient space or manifold which contains the (induced) subgraph of interest.  As in the continuous case, the conditions imposed on the boundary have significant impacts on the behavior of the eigenvalues and eigenfunctions on the subgraph.  While arbitrary boundary conditions could be imposed, the most well known boundary conditions are the Dirichlet and Neumann boundary conditions which specify the behavior of the function and its derivative, respectively, at the boundary. In the case of a graph, the derivative at the boundary is defined as the sum of the finite differences for edges incident to the vertex and crossing the boundary.  Letting $\partial S$ denote the vertex boundary of $S$, that is, the set of vertices which neighbor some vertex in $S$ but are not contained in $S$, and letting $E_S$ denote the edge contained in $S$ together with the edge boundary $E(S,\overline{S})$, we have the following definitions for Dirichlet and Neumann eignevalues of an induced subgraph.
\begin{define}[Dirichlet Eigenvalues]
Let $G=(V,E)$ be a graph and let $S \subseteq V$ be such that there is a nonempty vertex boundary $\partial S$. A function $f$ is said to satisfy the Dirichlet boundary condition if $f(x) = 0$ for all $x \in \partial S$ in which case we write $f \in D^*$. The \emph{Dirichlet eigenvalues} of the induced subgraph on $S$ are given by the relations:
\begin{align*}
\lambda_1^{(D)} &= \inf_{\substack{f \neq 0 \\  f\in D^*}} \frac{\sum_{\{u,v\} \in E_S}(f(u)-f(v))^2}{\sum_{u \in S} f(u)^2\deg{(u)}},
\end{align*}
and in general
\begin{align*}
    \lambda_i^{(D)} &= \inf_{\substack{f \neq 0 \\ f \in D^*}} \sup_{f' \in C_{i-1}}\frac{\sum_{\{u,v\} \in E_S}(f(u)-f(v))^2}{\sum_{u \in S} (f(u)-f'(u))^2\deg{(u)}}, 
\end{align*}
where $C_i$ is the subspace spanning by eigenfunctions $\phi_j$ achieving $\lambda_j$ for $1\leq j \leq i \leq \size{S}$.
\end{define}

\begin{define}[Neumann Eigenvalues]
Let $G = (V,E)$ be a connected graph and let $S$ be a subset of at least two vertices with non-empty vertex boundary $\partial S$.  The \emph{Neumann eigenvalue} for $G$ and $S$ is given by 
\[ \lambda_S = \min_{f} \frac{ \sum_{\set{u,v} \in E_S} (f(u)- f(v))^2}{\sum_{s \in S} f(s)^2 \deg(s)} \]
where $f \colon S \cup \partial S \rightarrow \R$ such that $\sum_{s \in S} f(s) \deg(s) = 0$.
\end{define}

We note that that the Neumann boundary condition is implicitly enforced in this definition.  More concretely, as the vertices in $\partial S$ do not appear the denominator, to achieve the infimum it suffices to minimize $\sum_{u \in E(v,S)} (f(u) - f(v))^2$ for each $v \in \partial S$, independently. Since this is a sum of squares, the minimum occurs precisely when $\sum_{u \in E(v,S)} 2 (f(v) - f(u)) = 0$, that is, when the derivative at $v$ is 0.  It is also worth noting that the Neumann eigenvalue has an ``orthogonality" constraint imposed on the domain of optimization, namely $\sum_{s \in S} \deg(s)f(s) = 0.$ 

In contrast, the Dirichlet condition is explicitly enforced and has no orthogonality constraint imposed on the domain of integration.  Because the constraint that the boundary is zero is explicitly enforced,  the numerator can be decomposed into two terms, 
\[
\sum_{\set{u,v} \in E(S,S)} \paren{f(u)-f(v)}^2 \mbox{  and a term of the form  } \sum_{v \in S} e(\set{v},\overline{S})f(v)^2.
\]
Note that the second term is zero only if $f(v) = 0$ for all $v$ which border $\partial S$, i.e. $\partial (\partial S)$.  Applying this decomposition recursively (essentially peeling the $S$ from the outside) yields that the numerator is 0 only if $f(s) = 0$ for all $s \in S$, and in particular, $\lambda^{(D)}_i > 0$ for all $i$. Combining this observation with the lack of an orthogonality constraint indicates that the Dirichlet spectrum can not be directly recovered from a inner product Laplacian on the graph $G$.  In particular, since every inner product Laplacian has a non-trivial eigenspace associated with the eigenvalue 0, this implies that to represent $\lambda_1^{(D)} > 1$ as an element of the spectrum of an inner product Laplacian, it must be necessary that the domain of optimization be restricted to functions orthogonal to a particular subspace.  

On the other hand, the Neumann eigenvalue (and the associated eigenfunction) can be recovered as the limit of the spectrum of a sequence of inner product Laplacians.

\begin{theorem}\label{T:neumann}
Let $G = (V,E)$ be a connected graph and $\lambda_S$ denote its Neumann eigenvalue, for a subset $S$ with at least two vertices.  There exists a sequence of inner products defined on the vertices and edges of $G$, $\set{(M_V^{(i)},M_E^{(i)})}$, such that for the associated inner product Laplacians $\set{\IPL[(i)]}$:
\begin{itemize}
    \item all have a one-dimensional zero-eigenspace,
    \item the smallest non-zero eigenvalues, $\set{\lambda_2^{(i)}}$, converge to $\lambda_S$, and
    \item there exist harmonic eigenvectors $\set{f^{(i)}}$ associated with $\lambda_1^{(i)}$, where $f^{(i)}$ converges point-wise to some function $f$ which realizes the Neumann eigenfunction.
\end{itemize}
\end{theorem}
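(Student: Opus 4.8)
The plan is to realize the constrained minimization defining $\lambda_S$ as a limit of generalized Rayleigh quotients for the $0$-th inner product Laplacian, which (since the $\boundary_0$ term vanishes) reduces to the semi-Hodge Laplacian $\IPL[0] = Q_V^{-1} B M_E B^T Q_V^{-1} = \sHL$, with $B$ the signed vertex-edge incidence matrix of $G$. Writing $\widehat{z} = Q_V^{-1}z$, the eigenvalues of $\IPL[0]$ are the generalized eigenvalues of the pencil $(BM_EB^T, M_V)$, and for diagonal inner products the Rayleigh quotient is $\frac{\sum_{\set{u,v}\in E} w_e(\widehat{z}_u - \widehat{z}_v)^2}{\sum_v m_v \widehat{z}_v^2}$, where $w_e, m_v$ are the diagonals of $M_E, M_V$. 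I would take both inner products diagonal, so that (by the observation after \Cref{L:similar} that a diagonal matrix has weak conformality $0$) no conformality corrections arise, and set $w_e = 1$ for $e \in E_S$, $w_e = t_i$ for $e \notin E_S$, $m_v = \deg(v)$ for $v \in S$, and $m_v = s_i$ for $v \notin S$, with $t_i, s_i \to 0^+$ at rates fixed below. Since every edge keeps a strictly positive weight and $G$ is connected, $\Ker(B^T) = \mathrm{span}(\one)$, whence $\Ker(\IPL[0]) = Q_V\,\mathrm{span}(\one)$ is one-dimensional for every $i$, settling the first bullet; the zero eigenvector is $\widehat{z} = \one$, so orthogonality to it reads $\sum_v m_v \widehat{z}_v = 0$, which limits to the Neumann constraint $\sum_{s\in S}\deg(s)\widehat{z}_s = 0$ as $s_i \to 0$.

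For the upper bound $\limsup_i \lambda_2^{(i)} \le \lambda_S$, I would feed the Neumann minimizer $f^*$, extended by $0$ on $V\setminus(S\cup\partial S)$, into the Rayleigh quotient: its numerator equals $\lambda_S\sum_{s\in S}\deg(s)f^*(s)^2 + \bigOh{t_i}$ and its denominator equals $\sum_{s\in S}\deg(s)f^*(s)^2 + \bigOh{s_i}$, while a constant shift of size $\bigOh{s_i}$ restores feasibility without altering the numerator, so the quotient tends to $\lambda_S$.

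The lower bound $\liminf_i \lambda_2^{(i)} \ge \lambda_S$ is the crux, and it is where the rate condition enters. Normalize a minimizer $\widehat{z}^{(i)}$ so its denominator equals $1$; the upper bound gives $\sum_{e\in E_S}(\widehat{z}_u - \widehat{z}_v)^2 \le \lambda_2^{(i)} \le \lambda_S + o(1)$. The denominator bounds $\widehat{z}^{(i)}$ on $S$, and since every boundary vertex is joined to $S$ by a weight-$1$ edge of $E_S$, the numerator bound then bounds $\widehat{z}^{(i)}$ on $\partial S$ as well. The real danger is that the denominator mass $s_i\sum_{v\notin S}(\widehat{z}^{(i)}_v)^2$ fails to vanish, i.e.\ that mass escapes into $V\setminus S$. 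Because the only edges touching $V\setminus(S\cup\partial S)$ carry weight $t_i$, connectivity of $G$ forces $\widehat{z}^{(i)}$ to vary by at most $\bigOh{t_i^{-1/2}}$ over those vertices, so the escaping mass is $\bigOh{s_i/t_i}$; choosing the scales so that $s_i/t_i \to 0$ (e.g.\ $t_i = \nicefrac{1}{i}$, $s_i = \nicefrac{1}{i^2}$) kills this term and forces $\sum_{s\in S}\deg(s)(\widehat{z}^{(i)}_s)^2 \to 1$, the same condition also sending any mode concentrated on $V\setminus S$ to Rayleigh quotient of order $t_i/s_i \to \infty$. Passing to a subsequence along which $\widehat{z}^{(i)} \to f$ pointwise on the finite set $S\cup\partial S$, one checks $f$ is Neumann-feasible and $\liminf_i \lambda_2^{(i)} \ge \sum_{e\in E_S}(f(u)-f(v))^2 \ge \lambda_S\sum_{s\in S}\deg(s)f(s)^2 = \lambda_S$; equality throughout forces $f$ to realize the Neumann eigenfunction and the eigenvectors to converge pointwise, giving the third bullet. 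I expect the main obstacle to be exactly this control of escaping mass and the attendant calibration of the two vanishing scales, with the a priori boundedness of the minimizers on $S\cup\partial S$ (inherited from the independently proved upper bound) serving as the lever that makes the compactness argument close.
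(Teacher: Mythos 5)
Your proposal is correct and takes essentially the same route as the paper's proof: the paper likewise uses diagonal inner products that down-weight edges outside $E_S$ and vertices outside $S$ (with a single parameter $\epsilon$, the vertex weights $\epsilon\deg_\epsilon(v)$ automatically encoding your two-scale condition $s_i \ll t_i$), proves the same $\bigOh{\epsilon^{-\nicefrac{1}{2}}}$ growth bound outside $S\cup\partial S$ to keep mass from escaping, and tests with the zero-extended, constant-shifted Neumann minimizer. The only differences are cosmetic: the paper extracts a limit pair $(g,\lambda)$ by compactness first and closes with a contradiction argument where you argue directly via matching $\limsup$/$\liminf$ bounds.
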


\begin{proof}
In order to construct the sequence $\set{(M_V^{(i)},M_E^{(i)})}$, we first define a pair $(M_V^{(\epsilon)},M_E^{(\epsilon)})$ for all $\epsilon > 0$ and the desired sequence will be defined by a subsequence $\epsilon_i$ which monotonically converges to $0$.  In order to define $M_E^{(\epsilon)}$, we first define the graph $G_{\epsilon} = (V,E,w_{\epsilon})$ where the weight function is $1$ for all edges in $E_S$ and $\epsilon$ for $E - E_S$.  We then define $M_E^{(\epsilon)}$ as the diagonal matrix given by the weight function $w_{\epsilon}.$  To emphasize the weighting of the edges, we will denote by $E_1 = E_S$ the set of edges of weight 1 and denote by $E_{\epsilon} = E - E_S$ the set of edges with weight $\epsilon$.  For any vertex $v \in V$ we will denote by $\deg_{\epsilon}(v)$ the weighted degree of $v$ in $G_\epsilon$ and define an inner product on the vertices by $M_V^{(\epsilon)} = \paren{Q_V^{(\epsilon)}}^2$ where 
\[ \left(Q_V^{(\epsilon)}\right)_{u,v} = \begin{cases} \sqrt{\deg_{\epsilon}(v)} & u = v \in S \\ \sqrt{\epsilon \deg_{\epsilon}(v)} & u = v \not\in S \\
0 & u \neq v \end{cases}.\]
We emphasize that $M_V^{(\epsilon)}$ is not the diagonal weighted degree matrix for $G_{\epsilon}$, but rather the diagonal weighted degree matrix for $G_{\epsilon}$ with the weight of vertices in $\overline{S}$ scaled by $\epsilon$. 

  Since $G$ is connected the zero eigenspace for each $\IPL[\epsilon]$ has dimension 1 and is given by $Q_V^{(\epsilon)}\one$.  Denote by $\lambda_{\epsilon}$ the second smallest eigenvalue of $\IPL[\epsilon]$ and let $g_{\epsilon}$ be an associated eigenvector with unit norm.   We note that, using the Rayleigh quotient, we may bound $\lambda_{\epsilon}$ above by a constant independent of $\epsilon$.  In particular, fix distinct $s, t \in S$ and consider the vector $h$ such that $h_s = \sqrt{\deg(t)}$, $h_t = -\sqrt{\deg(s)}$, and all other entries are 0.  Since for any $r \in S$, $\deg_{\epsilon}(r) = \deg(r)$, we have that $h^T Q_V^{(\epsilon)} \one = h_s \sqrt{\deg_{\epsilon}(s)} + h_t \sqrt{\deg_{\epsilon}(t)} = 0$ and $h^T\IPL[\epsilon]h = \deg(s) + \deg(t) - 2\one_{s \sim t}.$  Thus \[\lambda_{\epsilon} \leq \frac{\deg(s) + \deg(t) - 2\one_{s \sim t}}{\deg(s) + \deg(t)} \leq 1.\] As a consequence, the pairs $\set{(g_{\epsilon},\lambda_{\epsilon})}_{\epsilon > 0}$ are contained in a compact set and so there is a limit point.  More concretely, there exists a pair $(g,\lambda)$ and a decreasing sequence $\epsilon_1,\epsilon_2,\ldots$  such that $\lim_{i \rightarrow \infty} \epsilon_i = 0$ and $\lim_{i \rightarrow \infty}  (g_{\epsilon_i}, \lambda_{\epsilon_i}) = (g,\lambda).$  From this point forward, we will abuse notation and use $\epsilon \rightarrow 0^+$ to denote convergence along an appropriately chosen subsequence of $\set{\epsilon_i}$.

Now, in order to recover the Neumann eigenvalue and eigenvector from $(g_{\epsilon},\lambda_{\epsilon})$, we define $f_{\epsilon}$ by the relation $Q_V^{(\epsilon)}f_{\epsilon} = g_{\epsilon}$. By standard transformations of the Rayleigh quotient, we note that each $f_{\epsilon}$ satisfies
\[ \lambda_{\epsilon} = \sum_{\set{u,v} \in E_1} \paren{f_{\epsilon}(u) - f_{\epsilon}(v)}^2 + \epsilon \sum_{\set{u,v} \in E_{\epsilon}} \paren{f_{\epsilon}(u) - f_{\epsilon}(v)}^2.\] 

Combining this relationship with the unit norm of $g_{\epsilon}$ provides natural bounds on the entries of $f_{\epsilon}$.  In particular, for $s \in S$ we have that 
\[ \abs{f_{\epsilon}(s)} = \abs{ \frac{1}{\sqrt{\deg_{\epsilon}(s)}}g_{\epsilon}(s)} = \frac{1}{\sqrt{\deg(s)}}\abs{g_{\epsilon}(s)} \leq \frac{1}{\sqrt{\deg(s)}}.\] Further, for $t \in \partial S$ with neighbor $s \in S$ we have that $\paren{f_{\epsilon}(s) - f_{\epsilon}(t)}^2 \leq \lambda_{\epsilon} \leq 1$ and hence $\abs{f_{\epsilon}(t)} \leq 1 + \abs{f_{\epsilon}(s)}$.  In a similar fashion, consider $t \in \partial(S \cup \partial S)$ and let $s$ be a neighbor of $t$ in $S \cup \partial S$.  We then have that $\abs{f_{\epsilon}(t)} \leq \nicefrac{1}{\sqrt{\epsilon}} + \abs{f_{\epsilon}(s)} \in \bigOh{\nicefrac{1}{\sqrt{\epsilon}}}.$  Proceeding inductively, we can conclude that $f_{\epsilon}(t) \in \bigOh{\nicefrac{1}{\sqrt{\epsilon}}}$ for all $t \not\in S \cup \partial S.$  As a consequence, we can restrict the sequence $\set{\epsilon_i}$ to a subsequence such that $f_{\epsilon_i^{\prime}}(s)$ converges for all $s$ in $S \cup \partial S$ and such there is a universal constant $C$ such that $f_{\epsilon_i^{\prime}}(t) \leq \nicefrac{C}{\sqrt{\epsilon_i^{\prime}}}$ for all $t \not\in S \cup \partial S$. We will denote the limit on $S \cup \partial S$ as $f \colon S \cup \partial S \rightarrow \R$.  For convenience, we define $\lambda_f$ as the Rayleigh quotient associated with $f$, that is 
\[ \lambda_f = \frac{\sum_{\set{u,v} \in E_S} (f(u) - f(v))^2}{\sum_{u \in S} f^2(u)\deg(u)}.\]
At this point, to complete the proof it suffices to show that $f$ is in the domain of optimization for the Neunmann eigenvalue and that that $\lambda_f = \lambda = \lambda_S$.

For the the first, we note that 
\[ \abs{\sum_{s \in S} f_{\epsilon}(s)\deg(s)} = \abs{\sum_{s \in S} f_{\epsilon}(s) \deg_{\epsilon}(s) } = \abs{\sum_{s \in S} g_{\epsilon}(s) \sqrt{\deg_{\epsilon}(s)} } = \abs{\sum_{s \not\in S} g_{\epsilon}(s) \sqrt{\epsilon \deg_{\epsilon}(s)}} \leq  \sqrt{\epsilon \Vol(G_{\epsilon})} \longrightarrow 0,\]
where the last equality comes from the fact that $g_{\epsilon}^TQ_V^{(\epsilon)}\one = 0$ and the inequality follows from the Cauchy-Schwarz inequality. As $f_{\epsilon}(s) \rightarrow f(s)$ on $S$, we have that $\sum_{s \in S} f(s)\deg(s) = 0$.

To establish that $\lambda_S = \lambda_f = \lambda$ we will first show that $\lambda_S \leq \lambda_f \leq \lambda$ and then show that $\lambda_S \not< \lambda.$  As $f$ is in the domain of optimization for the Neumann eigenvalue, we have immediately that $\lambda_S \leq \lambda_f$.  To show that $\lambda_f \leq \lambda$, we first note that as $f_{\epsilon}(t) \in \bigOh{1}$ for any $t \in \partial S$, $f_{\epsilon}(t)^2 \epsilon \deg_{\epsilon}(t) \rightarrow 0$ as $\epsilon \rightarrow 0^+$.  Additionally, as $f_{\epsilon}(t) \in \bigOh{\nicefrac{1}{\sqrt{\epsilon}}}$ for $t \not\in \paren{\partial S \cup S}$, we have $f_{\epsilon}(t)^2 \epsilon \deg_{\epsilon}(t) \rightarrow 0$ as $\epsilon \rightarrow 0^+$ because $\deg_{\epsilon}(t)$ goes to zero like $\epsilon$.  Thus
\begin{align*}
    \lambda &= \lim_{\epsilon \rightarrow 0^+} \frac{ \sum_{\set{u,v} \in E_1} \paren{f_{\epsilon}(u) - f_{\epsilon}(v)}^2 + \epsilon \sum_{\set{u,v} \in E_{\epsilon}} \paren{f_{\epsilon}(u) - f_{\epsilon}(v)}^2}{\sum_{s \in S} f_{\epsilon}(s)^2\deg_{\epsilon}(s) + \sum_{t \not\in S} f_{\epsilon}(t)^2\epsilon \deg_{\epsilon}(t)}\\ 
    &\geq \lim_{\epsilon \rightarrow 0^+} \frac{ \sum_{\set{u,v} \in E_1} \paren{f_{\epsilon}(u) - f_{\epsilon}(v)}^2}{\sum_{s \in S} f_{\epsilon}(s)^2\deg_{\epsilon}(s) + \sum_{t \not\in S} f_{\epsilon}(t)^2\epsilon \deg_{\epsilon}(t)} \\
    &= \frac{\sum_{\set{u,v} \in E_1} (f(u) - f(v))^2}{\sum_{s \in S} f(s)^2\deg(u)} \\
    &= \lambda_f
\end{align*}   

To complete the proof, suppose that $\lambda_S < \lambda$ and let $f^* \colon S \cup \partial S \rightarrow \R$ be a vector achieving the Neumann eigenvalue such that $\sum_{s \in S} f^*(s)^2 \deg(s) = 1$.  By way of contradiction, we will define a family of functions $f_{\epsilon}^*$ such that $f_{\epsilon}^*$ is orthogonal to the principle eigenspace of $\IPL[\epsilon]$ for all $\epsilon$ and for all sufficiently small $\epsilon$ we have that
\[ \frac{\paren{f_{\epsilon}^*}^T \IPL[\epsilon] f_{\epsilon}^*}{\paren{f_{\epsilon}^*}^T M_V^{(\epsilon)} f_{\epsilon}^*} < \lambda_{\epsilon}.\] 
To this end,  we define  $C_{\epsilon} = \sum_{s \in \partial S} f^*(s) \epsilon \deg_{\epsilon}(S)$ and
\[ f_{\epsilon}^*(s) = \begin{cases}  f^*(s) - \frac{C_{\epsilon}}{\Vol(S)} & s \in S \\ f^*(s) & s \in \partial S \\ 0 & s \not\in S \cup \partial S\end{cases}. \]
It is easy to confirm that 
\begin{align*}
    \sum_{s \in S} f^*_{\epsilon}(s) \deg_{\epsilon}(s) + \sum_{t \not\in S} f_{\epsilon}^*(t) \epsilon \deg_{\epsilon}(t) & = 0. \\
    \lim_{\epsilon \rightarrow 0^+} \sum_{\set{u,v} \in E_1} \paren{f_{\epsilon}^*(u) - f_{\epsilon}^*(v)}^2 + \epsilon \sum_{\set{u,v} \in E_{\epsilon}} \paren{f_{\epsilon}^*(u) - f_{\epsilon}^*(v)}^2 &= \lambda_S, \quad \textrm{and} \\
    \lim_{\epsilon \rightarrow 0^+} \sum_{s \in S} f^*_{\epsilon}(s)^2\deg_{\epsilon}(s) + \sum_{t \in \overline{s}} (f^*_{\epsilon}(t))^2 \epsilon \deg_{\epsilon}(t) &= 1,
\end{align*}
yielding the desired contradiction.
\end{proof}

\section{Isoperimetric Inequalities for Inner Product Laplacians}\label{S:iso}
In this final section, we show how the standard isoperimentric results for combinatorial and normalized Laplacians \cite{brouwer2011spectra, chung1997spectral} can be generalized to the case of the inner product Laplacian.  As a consequence, we unify the theoretical discussion of isoperimentric inequalities for these two Laplacians and provide a general framework for understating isoperimetric inequalities in graph.  Of particular note, the generalization of the isoperimetric properties to the inner product Laplacian provides theoretical justification for the using the inner product Laplacian to perform spectral clustering.  Such spectral clustering approaches provide a natural and mathematically grounded means of synthesizing combinatorial (graphical) data and non-combinatorial (vertex or edge meta-data) data into a unified frame of reference, potentially significantly improving the practical performance of these approaches. 

It is worth mentioning one significant challenge associated with this approach; if $M_E$ is not weakly $0$-conformal then the spectrum of the inner product Laplacian is not independent of the choice of signing of the edges (i.e., the choice of the matrix $B$). For example consider the case of the path on three vertices, $v_1, v_2, v_3$ with $\deg(v_2) = 2$, $M_V = I$, $M_E = I + J$ where $J$ is the all-ones matrix.  Viewing the signings as directed orientations of the edges, there are three choices of signings for $B$, corresponding to $v_2$ having in-degree $0$, $1$, or $2$.  As one can transform between the in-degree $0$ and $2$ by reversing all the edges (and consequently all the signs in $B$), there are two distinct inner product Laplacians for this scenario; 
\[ \begin{bmatrix} 2 & -3 & 1\\ -3 & 6 & -3 \\ 1 & -3 & 2  \end{bmatrix} \qquad \textrm{and} \qquad
\begin{bmatrix} 2 & -1 & -1 \\ -1 & 2  & -1 \\ -1  & -1 & 2 \end{bmatrix}.  \]  The spectrum for these two matrices are $(9,1,0)$ and $(3,3,0)$, respectively.  As a consequence, the isoperimetric results of this section should be thought of as a family of isomperimetric inequalities which can be indexed by the orientations of $G$.

Before stating the general form of the isoperimetric inequalities for the inner product Laplacian, it will be helpful to review some standard notation and terms used for isopererimetric properties and extend their definition to the case where the vertices and edges are equipped with inner product spaces, represented by $M_V$ and $M_E$, respectively.  For a collection of vertices, the volume generalizes easily to the inner product context with $\Vol(X) = \one_X^TM_V\one_X$.  It will be helpful to extend this notion to pairs of sets, and so we define $\Vol(X,Y) = \one_X^TM_V\one_Y.$  This approach can be extended to edges between pairs of disjoint sets of vertices $X$ and $Y$, by denoting the set of edges with one endpoint in $X$ and the other endpoint in $Y$, by $E(X,Y)$.  The size of $E(X,Y)$, denoted by $e(X,Y)$, is then given by 
$\one_{E(X,Y)}^TM_E\one_{E(X,Y)}.$  A small complication occurs in the case that  $X \cap Y \neq \varnothing$; the standard approach is to consider  $E(X,Y)$ as a multi-set with the edges in $G[X \cap Y]$ occurring twice and denote by $e(X,Y)$ the cardinality of that multi-set.  However, this approach does not align well with the inner-product-based notion of the size of a set, thus we will view $E(X,Y)$ as a set of edges, independently of the size of $X \cap Y$ and $e(X,Y) = \one_{E(X,Y)}^TM_E\one_{E(X,Y)}$.  We will also denote $E(X,X)$ and $e(X,X)$ by $E(X)$ and $e(X)$, respectively.  Thus, we will denote the standard size of a the set of edges between $X$ and $Y$ as $e(X,Y) + e(X \cap Y)$. Finally, we introduce a new notation, $\Cor(X,Y) = \Vol(X,Y)\Vol(\overline{X},\overline{Y}) - \Vol(X,\overline{Y})\Vol(\overline{X},Y)$ and $\Cor(X) = \Cor(X,X)$, which we call the \emph{correlation} of $X$ and $Y$.  In the case $M_V$ is weakly 0-conformal the correlation of a set $X$ reduces to  $\Vol(X)\Vol(\overline{X})$ as $\Vol(X,\overline{X}) = 0.$

\subsection{The Cheeger Inequality} 
While there are a numerous results relating the spectra of the combinatorial or normalized Laplacian of a graph to structural properties, such as \cite{Alon:spectralColoring,CHAIKEN1978377,kirkoff_orig,wocjan2013new}, perhaps the most widely used relationship is the Cheeger inequality, which connects the spectrum to sparsest cuts.
Depending on how this sparsity is measured, there are different versions of the Cheeger inequality that rely on different graph Laplacians. 
For example, if the sparsity of a cut is measured by the Cheeger ratio, $h(S) = \nicefrac{e(S,\overline{S})}{\min\set{\size{S},\size{\overline{S}}}}$, the second smallest eigenvalue of the combinatorial Laplacian, $\mu_2$ bounds the minimum Cheeger ratio below by $\nicefrac{\mu_2}{2}$ and above by $\sqrt{2\Delta \mu_2}$ where $\Delta$ is the maximum degree of $G$.  If instead, the sparsity of the cut is measured by the conductance, $\Phi(S) = \nicefrac{e(S,\overline{S})}{\min\set{\Vol(S),\Vol(\overline{S})}}$, the second smallest eigenvalue of the normalized Laplacian, $\lambda_2$, bounds the minimum conductance below by $\nicefrac{\lambda_2}{2}$ and above by $\sqrt{2\lambda_2}.$  Given the similarities of these two results (as well as the similarities of the respective proofs) it is unsurprising that there is a common generalization within the context of the inner product Laplacian. 
To this end, we define a generic notation of the sparsity of a cut that incoporates the size information provided by the inner products.

\begin{define}[Inner Product Conductance]
 Let $G = (V,E)$ be a connected graph and let $M_V$ and $M_E$ be inner product spaces defined on the vertices and edges of $G$.  For any proper, non-empty subset $S$ of $V$, the \emph{inner product conductance} of the set $S$ is given by
\[ \Phi(S) = \frac{e(S,\overline{S})}{\min\set{\Vol(S),\Vol(\overline{S})}}. \]  The inner product conductance of the graph, $\Phi$, is the minimum of $\Phi(S)$ over all proper, non-empty subsets $S$ of $V$.
\end{define}

In general, the inner products used to define the inner product conductance will be clear from context. In the case the inner products are not explicitly specified, we will default to the inner products which define the normalized Laplacian, i.e. $M_V$ is the diagonal matrix of degrees and $M_E$ is the identity matrix.  With this notation in hand, we  provide a generalization the Cheeger inequality to the case of inner product Laplacians. 

\begin{theorem}\label{T:cheeger}
Let $G = (V,E)$ be a connected graph and let $B$ be some signed vertex-edge incidence matrix for $G$.  Let $M_V$ and $M_E$ be $\rho_V$- and $\rho_E$-weakly-conformal inner product spaces on the vertices and edges which are $\omega$-compatible.  If $\Phi$ is the inner product conductance of $G$ and $\lambda_2$ is the second smallest eigenvalue for the inner product Laplacian defined by $B$, $M_V$, and $M_E$, then 
\[ \paren{\frac{1-\rho_V}{1+\rho_V}}^7\paren{\frac{1-\rho_E}{1+\rho_E}}^4 \frac{\Phi^2}{2\omega} \leq \lambda_2 \leq \frac{2}{1-\rho_V}\frac{1+\rho_E}{1-\rho_E}\Phi. \]
\end{theorem}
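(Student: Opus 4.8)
The plan is to mimic the classical two-sided Cheeger argument, but to systematically trade between the true quadratic forms $x^T M x$ and their diagonal/absolute-value surrogates using the conformality lemmas \Cref{L:conformal} and \Cref{L:trace}. The key conceptual point is that the inner product Laplacian $\IPL$ is a conjugation of $B M_E B^T$ by $M_V^{-1/2}$, so via the substitution $\widehat{z} = M_V^{-1/2} z$ (equivalently $g = Q_V f$) the Rayleigh quotient becomes
\[
\frac{f^T B M_E B^T f}{f^T M_V f},
\]
optimized over $f$ orthogonal (in the $M_V$ inner product) to the kernel vector $Q_V \one$. I will work throughout with the function form $f$ and treat the numerator $f^T B M_E B^T f$ and denominator $f^T M_V f$ separately.

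For the easy (upper bound) direction, I would start from a set $S$ achieving $\Phi$ and build a test vector $f$ supported so as to mimic the indicator of $S$ — concretely $f = \one_S/\Vol(S) - \one_{\overline S}/\Vol(\overline S)$, shifted to be $M_V$-orthogonal to the kernel. The numerator $f^T B M_E B^T f = \sum_e w_e (\sum_{v\in e} B_{ev} f_v)^2$ should be controlled above by $e(S,\overline S)$ up to the factor $\tfrac{1+\rho_E}{1-\rho_E}$ coming from \Cref{L:trace} applied to $M_E$ (to replace the off-diagonal edge interactions by the diagonal edge weights), while the denominator is bounded below by $\min\{\Vol(S),\Vol(\overline S)\}$ up to the factor $\tfrac{1}{1-\rho_V}$ from applying \Cref{L:trace}/\Cref{L:conformal} to $M_V$. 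Feeding these into the Rayleigh quotient yields the stated $\tfrac{2}{1-\rho_V}\tfrac{1+\rho_E}{1-\rho_E}\Phi$; this direction is essentially bookkeeping.

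The hard direction is the lower bound $\lambda_2 \geq (\tfrac{1-\rho_V}{1+\rho_V})^7(\tfrac{1-\rho_E}{1+\rho_E})^4 \tfrac{\Phi^2}{2\omega}$, whose large conformality exponents signal that each step of the standard argument must be converted between true and surrogate forms. I would take the eigenvector $f$ for $\lambda_2$, and follow the classical recipe: order the vertices by $f$-value, form level sets $S_t = \{v : f_v > t\}$, and bound $\lambda_2$ below in terms of $\min_t \Phi(S_t)$. The crux is the discrete Cauchy–Schwarz / co-area step, where one writes $\sum_{\{u,v\}} |f_u^2 - f_v^2|$ and relates $(\sum |f_u^2-f_v^2|)^2$ to $(\sum (f_u-f_v)^2)(\sum (f_u+f_v)^2)$, then identifies the first factor with the numerator and the second with (a multiple of) the denominator. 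In the inner product setting, every one of these sums must be passed through \Cref{L:conformal} (to convert $f_u - f_v$ / $f_u + f_v$ terms under arbitrary edge signs into the absolute-value form that interacts cleanly with $M_E$) and through \Cref{L:trace} (to replace $f^T M_V f$ by $\sum_v f_v^2 (M_V)_{vv}$ and the edge form by its diagonal $\sum_e w_e(\cdots)$), while the $\omega$-compatibility is what relates the accumulated edge weights back to vertex volumes. I expect the genuinely delicate obstacle to be tracking the cut size $e(S_t, \overline{S_t})$ through these surrogate replacements: because $M_E$ is not weakly $0$-conformal, the contribution of a single edge to the true form $f^T B M_E B^T f$ mixes across edges, and it is exactly \Cref{L:conformal} and \Cref{L:trace} — applied separately to the "difference" numerator and the "sum" denominator, each contributing its own $\tfrac{1\pm\rho}{1\mp\rho}$ factor — that accounts for the high powers in the final constant. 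Once the level-set cut is bounded below by $\Phi \cdot \min\{\Vol(S_t),\Vol(\overline{S_t})\}$ and these volumes are related back to $\sum_v f_v^2 (M_V)_{vv}$ via \Cref{L:trace}, squaring and collecting all conformality factors should yield the claimed $\rho_V$- and $\rho_E$-exponents together with the $\tfrac{\Phi^2}{2\omega}$ kernel.
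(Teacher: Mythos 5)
Your overall route coincides with the paper's: the upper bound via a cut-based test vector in the harmonic Rayleigh quotient, and the lower bound via level sets of the eigenvector, with \Cref{L:conformal}, \Cref{L:trace}, and $\omega$-compatibility supplying the conformality factors (the paper phrases the level-set step probabilistically, following Trevisan, but that is the same co-area argument you describe). However, your lower bound has a genuine gap at the point you compress into ``once the level-set cut is bounded below by $\Phi \cdot \min\set{\Vol(S_t),\Vol(\overline{S_t})}$.'' The co-area/Cauchy--Schwarz step applied to level sets $S_t$ only produces \emph{some} $t$ with $e(S_t,\overline{S_t})$ small relative to $\Vol(S_t)$; this is a bound against $\Vol(S_t)$, not against $\min\set{\Vol(S_t),\Vol(\overline{S_t})}$, so it says nothing about $\Phi(S_t)$ whenever $\Vol(S_t) > \Vol(\overline{S_t})$. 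Classically this is repaired by first centering the eigenvector at a weighted median, splitting it into positive and negative parts $y^+,y^-$ with disjoint supports, verifying $\min\set{\mathcal{R}(y^+),\mathcal{R}(y^-)} \leq \mathcal{R}(y^+ - y^-)$, and running the level sets inside the lighter half; your sketch omits this step entirely. Moreover, in the inner product setting the repair itself costs conformality factors: the median can only be taken with respect to the diagonal surrogate weights, so even after the split one may have $\Vol(S_t) > \Vol(\overline{S_t})$, and the paper must invoke \Cref{L:trace} once more to convert the bound on $e(S_t,\overline{S_t})/\Vol(S_t)$ into a bound on $\Phi(S_t)$, at the price of an extra factor $\paren{\frac{1+\rho_V}{1-\rho_V}}^2$. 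This is precisely where part of the exponent $7$ comes from, and it cannot be recovered from the accounting in your sketch.

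A smaller but real issue is in the upper bound: to get the stated constant $\frac{2}{1-\rho_V}\frac{1+\rho_E}{1-\rho_E}$, the numerator must be handled with \Cref{L:conformal}, not \Cref{L:trace}. Since $\abs{B^T\one_X} = \one_{E(X,\overline{X})}$, \Cref{L:conformal} flips the arbitrary signs of the cut vector and gives $\one_X^T B M_E B^T \one_X \leq \frac{1+\rho_E}{1-\rho_E}\, e(X,\overline{X})$ in one step, where $e(X,\overline{X})$ is the full quadratic form $\one_{E(X,\overline{X})}^T M_E \one_{E(X,\overline{X})}$; routing through the diagonal edge weights as you propose costs a second factor of $\frac{1+\rho_E}{1-\rho_E}$ and misses the theorem's constant. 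Likewise, the factor $\frac{2}{1-\rho_V}$ does not come from diagonalizing the denominator: it arises because the orthogonalizing shift $\gamma$ satisfies $\Vol(G)\gamma = \Vol(X) + \Vol(X,\overline{X})$, and the cross-volume $\Vol(X,\overline{X})$ (nonzero exactly because $M_V$ is not weakly $0$-conformal) must be controlled by $\Vol(X,\overline{X}) \leq \rho_V \sqrt{\Vol(X)\Vol(\overline{X})}$ directly from the definition of weak conformality.
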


Before turning to the proof of \Cref{T:cheeger}, we note that this statement exactly recovers the Cheeger inequalities for both the normalized Laplacian and combinatorial Laplacian.  In particular, in both cases $M_V$ and $M_E$ are weakly-0-conformal, and so the form of the bound for both the combinatorial and normalized Laplacian is $\frac{\Phi^2}{2\omega} \leq \lambda_2 \leq 2\Phi$.  Noting that for the normalized Laplacian, the inner products are $1$-compatible and $\Phi$ is the standard conductance, yields Cheeger inequality for the normalized Laplacian.  On the other hand, the inner products for the combinatorial Laplacian are $\Delta$-compatible (where $\Delta$ is the maximum degree of $G$) and $\Phi(S)$ is the edge-expansion of the cut $(S,\overline{S})$, i.e. $\Phi(S) =  \frac{\size{E(S,\overline{S})}}{\min{\size{S},\size{\overline{S}}}}$.  Thus, \Cref{T:cheeger} generalizes the Cheeger inequality for both the combinatorial and normalized Laplacian.

\begin{proof}
We first consider the upper bound and recall that by the Rayleigh-Ritz formulation 
\[ \lambda_2 = \min_{f^T Q_V \one = 0} \frac{f^T \IPL f}{f^Tf} = \min_{g^T M_V \one = 0} \frac{g^T B M_E B^T g}{g^T M_V g}.\]
Thus $\lambda_2$ can be upper bounded by the value of the harmonic Rayleigh quotient for any vector $G$ which is orthogonal to $M_V \one$.  Thus, we will fix some $X$ such that $\Vol(X) \leq \Vol(\overline{X})$ and define a vector $g_X$ such that the Rayleigh quotient can be written in terms of $\Phi(X)$.  Choosing such a set $X$ which minimizes $\Phi(X)$ will then yield the desired upper bound.  To that end, fix $X$ and define $g_X = \one_X - \gamma \one$ where $\gamma$ is chosen to be such that $g_X^TM_V\one = 0$, namely $\Vol(G) \gamma = \Vol(X) + \Vol(X,\overline{X}).$

Considering the denominator and numerator of the harmonic Rayleigh quotient separately, we have that 
\[ g^T M_V g = \Vol(X) - 2\gamma \paren{\Vol(X) + \Vol(X,\overline{X})} + \gamma^2 \Vol(G) =  \Vol(X) - \gamma^2 \Vol(G)\]
For the numerator we first note that $B^T\one = 0$ and thus $g_X^T B M_E B^T g_X = \one_X^T B M_E B^T \one_X$.  Further, $B^T \one_X$ is supported precisely on $E(X,\overline{X})$. In particular, if $e = \set{u,v}$ is contained in $G[X]$ then $B_{u,e}$ and $B_{v,e}$ have opposite signs and cancel out, whereas if $e$ is contained in $G[\overline{X}]$ then the entries in $\one_X$ corresponding to $u$ and $v$ are both zero.  We note that, in general,  $B^T\one_X  \neq \one_{E(X,\overline{X})}$.  Indeed, for any $e = \set{u,v} \in E(X,\overline{X})$, the entry $\paren{\one_X^TB}_e = B_{x,e}$ where $x \in \set{u,v} \cap X$.  As a consequence, this entry can be either $1$ or $-1$.  However, as $\abs{B^T\one_X} = \one_{E(X,\overline{X})}$, by applying \Cref{L:conformal} we have that $y^T B M_E B^T y \leq \frac{1+\rho_E}{1-\rho_E} e(X,\overline{X}).$

Combining the numerator and denominator, we have that 
\[ \lambda_2 \leq \frac{1}{\Vol(X) - \gamma^2 \Vol(G)} \paren{\frac{1+\rho_E}{1-\rho_E}} e(X,\overline{X}) =  \frac{\Vol(X)}{\Vol(X) - \gamma^2 \Vol(G)}  \paren{\frac{1+\rho_E}{1-\rho_E}} \Phi(X). \]
Finally, we observe that
\begin{align*}
     \frac{\Vol(X)}{\Vol(X) - \gamma^2 \Vol(G)} &= \frac{\Vol(G)\Vol(X)}{\Vol(G)\Vol(X) - \paren{\Vol(X) + \Vol(X,\overline{X})}^2 } \\
     &= \frac{\Vol(X)^2 + 2 \Vol(X)\Vol(X,\overline{X}) + \Vol(X)\Vol(\overline{X})}{\Vol(X)\Vol(\overline{X}) - \Vol(X,\overline{X})^2} \\
     &\leq \frac{\Vol(X)^2 + 2 \rho_V \Vol(X)\sqrt{\Vol(X)\Vol(\overline{X})} + \Vol(X)\Vol(\overline{X})}{\Vol(X)\Vol(\overline{X})- \rho_V^2\Vol(X)\Vol(\overline{X})} \\
     &= \frac{1}{1-\rho_V^2} \frac{ \Vol(X) + \Vol(\overline{X}) + 2\rho_V \sqrt{\Vol(X)\Vol(\overline{X})}}{\Vol(\overline{X})} \\
     &\leq \frac{2\Vol\paren{\overline{X}} + 2\rho_V\Vol\paren{\overline{X}}}{1-\rho_V^2} \\
     &= \frac{2}{1-\rho_V}.
\end{align*}

We now turn to the lower bound on $\lambda_2$.  To this end, we will follow the probabilistic approach of Trevisian~\cite{trevisian:blog} and heavily use the weak-conformality of $M_V$ and $M_E$ to simplify the proof.  Specifically, we will use the weak conformality of $M_V$ to construct a random variable $\mathcal{S}$ on the space of vertex sets and show that there is a constant $C$ such that $\expect{e(\mathcal{S},\overline{S}) - C \Vol(\mathcal{S})} < 0$.  In turn, this relation (and the choice of $C$) will provide the lower bound.
To that end, for any $f \colon V \rightarrow \R$, define \footnote{Throughout the remainder of the this proof, summands of the form $w_e(f(u) - f(v))^2$ where $e = \set{u,v}$ will be common.  In order to present a more compact notation, we will present this as a sum over edges and with the implicit assumption that $u$ and $v$ are the end points of the edge $e$.} $\mathcal{R}(f) = \frac{\sum_e w_e \paren{f(u)-f(v)}^2}{\sum_v d_v f(v)^2}.$

Let $y \in \R^{\size{V}}$ be an harmonic eigenvector of $\IPL$ associated with $\lambda_2$ and order the vertices $V = \set{v_1, \ldots, v_{\size{V}}}$ so that $y(v_i) \geq y(v_j)$ if $i \leq j$, that is the vertices are ordred by decreasing values of $y$.  Define $k$ as the largest $k$ such that $\sum_{i=1}^{k} d_{v_i} \leq \sum_{j = k+1}^{\size{V}} d_{v_j}$, and define 
\[ y^+(v_i) = \begin{cases} y(v_i) - y(v_k) & i < k \\ 0 & i \geq k\end{cases} \qquad \textrm{and} \qquad y^-(v_j) = \begin{cases} 0 & j \leq k \\ y(v_k) - y(v_j) & j > k \end{cases}. \]  
By construction $y^+$ and $y^-$ have disjoint support and are positive vectors.  
Now, we note that 
\begin{align*}
\frac{\paren{y^+ - y^-}^T B M_E B^T \paren{y^+ - y^-}}{\paren{y^+ - y^-}^TM_V \paren{y^+- y^-}} &= \frac{\paren{y - y(v_k)\one}^T B M_E B^T \paren{y - y(v_k)\one}}{\paren{y - y(v_k)\one}^T M_V \paren{y - y(v_k)\one}} \\
&= \frac{y^TB M_E B^T y}{y^TM_Vy + y(v_k)^2 \Vol(G)} \\
&\leq \lambda_2,
\end{align*}
where the last equality comes from the fact that $y$ is orthogonal to the zero harmonic eigenspace, $M_V\one$.  By using \Cref{L:trace} and the conformality of $M_E$ and $M_V$, we then have that 
\[
\left(\frac{1-\rho_E}{1+\rho_E}\right)\left(\frac{1-\rho_V}{1+\rho_V}\right) \mathcal{R}\paren{y^+ - y^-} \leq \lambda_2.
\]
Since the support of $y^+$ and $y^-$ are disjoint, we observe that $\sum_v \paren{y^+(v) - y^-(v)}^2 d_v = \sum_v \paren{ y^+}^2 d_v + \paren{y^-}^2 d_v$, and additionally, we have 
\begin{align*}
    \sum_{e \in E} w_e\paren{ y^+(u) - y^-(u)  - y^+(v) + y^-(v)}^2 &=  \sum_{e \in E} w_e \paren{y^+(u) - y^+(v)}^2  + w_e\paren{y^-(u) - y^-(v)}^2 \\ 
    &\quad - 2\sum_{e  \in E} \paren{y^+(u) - y^+(v)}\paren{y^-(u) - y^-(v)} \\
    &\geq \sum_{e = \set{u,v}} w_e \paren{y^+(u) - y^+(v)}^2 + w_e\paren{y^-(u) - y^-(v)}^2.
\end{align*} 
The inequality comes from the fact that $\paren{y^+(u) - y^+(v)}\paren{y^-(u) - y^-(v)} \leq 0$.  In particular, it is zero if $\set{u,v}$ is contained the support of $y^+$ or $y^-$, while if ${u,v}$ is not contained in the support of $y^+$ or $y^-$, then  $y^+(u) - y^+(v)$ and $y^-(u) - y^-(v)$ have opposite signs.  As a consequence, letting $y' = \argmin \set{\mathcal{R}(y^+),\mathcal{R}(y^-)}$, we have that $\mathcal{R}(y') \leq \mathcal{R}(y^+ - y^-)$.

By definition, $y'$ is non-negative for all $v$ and further, by rescaling, we may assume without loss of generality that $\max_{v \in V} y'(v) = 1$.  Thus, we may use $y'$ to define a probability space over non-empty subsets of $V$.  Specifically, for any $t \in (0,1)$ define $S_t = \set{v \in V \mid y'(v)^2 \geq t}$ and let $\mathcal{S}$ be the random variable given by $S_t$ where $t$ is chosen uniformly from $(0,1)$.  At this point, our goal is the use the properties of $\mathcal{S}$ to show that there is some $t$ where $\frac{e(S_t,\overline{S_t})}{\Vol(S_t)}$ can be bounded in terms of $\mathcal{R}(y')$.  It is worth noting that this does not directly imply that $\Phi(S_t)$ can be bounded in terms of $\mathcal{R}(y')$ as it may happen that $\Vol(S_t) \geq \Vol(\overline{S_t})$.  We defer dealing with this issue for now and focus instead on the behavior of $e(S_t,\overline{S_t})$ and $\Vol(S_t)$.

We note that if $y'(v)= 0$, then for all $t \in (0,1)$ we have $v \not\in S_t$.  Thus, we may restrict our attention to the vertices in the support of $y'$ and let $k$ be the size of the support.  We also reorder the vertices so that $y'(v_i) \geq y'(v_j)$ for $i \geq j$.  Using this notation, we have that 
\begin{align*}
    \expect{\Vol(\mathcal{S})} &= \sum_{i = 1}^{k} \paren{y'(v_i)^2 - y'(v_{i+1})^2} \Vol(S_{y'(v_i)}) \\
    &\geq \frac{1-\rho_V}{1+\rho_V}\sum_{i = 1}^{k} \paren{y'(v_i)^2 - y'(v_{i+1})^2} \sum_{j = 1}^i d_{v_j} \\
    &= \frac{1-\rho_V}{1+\rho_V} \sum_{j = 1}^k d_{v_j} y'(v_j)^2.  \\
\end{align*}
Furthermore, we have 
\begin{align*}
    \expect{e(\mathcal{S},\overline{\mathcal{S}})} &= \sum_{i=1}^{k} \paren{y'(v_i)^2 - y'(v_{i+1})^2} e(S_{y'(v_i)},\overline{S_{y'(v_i)}})\\
    &\leq \frac{1+\rho_E}{1-\rho_E} \sum_{i=1}^{k} \paren{y'(v_i)^2 - y'(v_{i+1})^2} \sum_{u \in S_{y'(v_i)}} \sum_{v \not\in S_{y'(v_i)}} w_e \one_{\set{u,v} \in E} \\
    &= \frac{1+\rho_E}{1-\rho_E} \sum_{e \in E} w_e\abs{y'(u)^2 - y'(v^2} \\
    &\leq \frac{1+\rho_E}{1-\rho_E} \sqrt{ \sum_{e \in E} w_e \paren{y'(u) - y'(v)}^2} \sqrt{ \sum_{e \in E} w_e \paren{y'(u) + y'(v)}^2} \\
    &\leq \frac{1+\rho_E}{1-\rho_E} \sqrt{ \sum_{e \in E} w_e \paren{y'(u) - y'(v)}^2} \sqrt{ \sum_v \paren{2 y'(v)^2 \sum_{e \in E(v)} w_e} }\\ 
    &\leq \frac{1+\rho_E}{1-\rho_E} \sqrt{ \sum_{e \in E} w_e \paren{y'(u) - y'(v)}^2} \sqrt{ \sum_v 2 y'(v)^2  \left(\frac{1+\rho_E}{1-\rho_E}\right) e(v)}\\ 
    &\leq \paren{\frac{1+\rho_E}{1-\rho_E}}^{\nicefrac{3}{2}}  \sqrt{ \sum_{\set{u,v}=e \in E} w_e \paren{y'(u) - y'(v)}^2} \sqrt{ \sum_v 2 y'(v)^2 \omega d_v }\\ 
\end{align*}
Thus we may conclude that 
\[ \expect{ e(\mathcal{S},\overline{\mathcal{S}}) - \paren{\frac{1+\rho_V}{1-\rho_V}}\paren{\frac{1 + \rho_E}{1-\rho_E}}^{\nicefrac{3}{2}}\sqrt{2\omega \mathcal{R}(y')} \Vol(\mathcal{S})} \leq 0. \] As a consequence, there is some $t \in (0,1)$ such that 
\[ \frac{e(S_t,\overline{S_t})}{\Vol(S_t)} \leq \paren{\frac{1+\rho_V}{1-\rho_V}}\paren{\frac{1 + \rho_E}{1-\rho_E}}^{\nicefrac{3}{2}}\sqrt{2\omega \mathcal{R}(y')} \leq  \paren{\frac{1+\rho_V}{1-\rho_V}}\paren{\frac{1 + \rho_E}{1-\rho_E}}^{\nicefrac{3}{2}}\sqrt{2\omega \left(\frac{1+\rho_E}{1-\rho_E}\right) \left(\frac{1+\rho_V}{1-\rho_V}\right)\lambda_2}.\] 

In order to complete the proof, it is necessary to handle the case where $\Vol(S_t) > \Vol(\overline{S_t})$. To this end, we observe that by construction of $y^+$ and $y^-$, we have that $\sum_{v \in S_t} d_v \leq \frac{1}{2}\trace(M_V)$ and $\sum_{v \not\in S_t} d_v \geq \frac{1}{2}\trace(M_V)$.  In particular, by \Cref{L:trace}, this implies that $\Vol(S_t) \leq \frac{1+\rho_V}{1-\rho_V} \frac{1}{2}\trace(M_V)$ and $\Vol(\overline{S_t}) \geq \frac{1-\rho_V}{1+\rho_V} \frac{1}{2}\trace(M_V).$  We may conclude that 
\[ \frac{e(S_t,\overline{S_t})}{\Vol(\overline{S_t})} \leq \paren{\frac{1+\rho_V}{1-\rho_V}}^2 \frac{e(S_t,\overline{S_t})}{\Vol(S_t)}\]
and thus 
\[ \Phi \leq \Phi(S_t) \leq \paren{\frac{1+\rho_V}{1-\rho_V}}^{\nicefrac{7}{2}}\paren{\frac{1+\rho_E}{1-\rho_E}}^2 \sqrt{2\omega \lambda_2},\]
as desired.
\end{proof}

We note that \Cref{T:cheeger} combined with the proof of \Cref{T:neumann} immediately yields a Cheeger-upper bound on the Neumann eigenvalue.
\begin{corollary}
    Let $G$ be a connected graph and let $S$ be a proper subset of the vertices with size at least 2.  Define the \emph{$S$-local conductance} as \[ \Phi_S = \min_{T \subseteq S} \frac{e(T,\overline{T})}{\min\set{\Vol(T),\Vol(S-T)}}.\]
    If $\lambda_S$ is the Neumann eigenvalue for the set $S$, then  $\lambda_S \leq 2\Phi_S$.   
\end{corollary}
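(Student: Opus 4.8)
The plan is to combine the explicit sequence of inner product Laplacians constructed in the proof of \Cref{T:neumann} with the upper-bound half of \Cref{T:cheeger}. Recall that for each $\epsilon > 0$ that proof produces \emph{diagonal} inner products $M_V^{(\epsilon)}$ and $M_E^{(\epsilon)}$ whose associated inner product Laplacian $\IPL[\epsilon]$ has a one-dimensional zero-eigenspace and second eigenvalue $\lambda_2^{(\epsilon)}$ converging, along a subsequence $\epsilon_i \to 0^+$, to $\lambda_S$. Because both matrices are diagonal they are weakly $0$-conformal, so $\rho_V = \rho_E = 0$, and being finite-dimensional they are $\omega$-compatible for some finite $\omega$ (which plays no role in the upper bound). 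The hypotheses of \Cref{T:cheeger} therefore hold, and with $\rho_V = \rho_E = 0$ its upper bound collapses to $\lambda_2^{(\epsilon)} \le 2\,\Phi^{(\epsilon)}$, where $\Phi^{(\epsilon)}$ is the inner product conductance of $G$ with respect to $(M_V^{(\epsilon)}, M_E^{(\epsilon)})$.

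The heart of the argument is to bound $\Phi^{(\epsilon)}$ in terms of $\Phi_S$ by using a set $T \subseteq S$ achieving $\Phi_S$ as a test cut. The key simplification is that, since $T \subseteq S$, every edge leaving $T$ lies in $E_S$: an edge from $T$ to $S - T$ is internal to $S$, while an edge from $T$ to $\overline{S}$ is a boundary edge, so in either case its $w_\epsilon$-weight equals $1$. As $M_E^{(\epsilon)}$ is diagonal this yields $e^{(\epsilon)}(T,\overline{T}) = e(T,\overline{T})$ exactly. The same reasoning shows every edge incident to a vertex of $S$ has weight $1$, so $\deg_\epsilon(v) = \deg(v)$ for all $v \in S$ and hence $\Vol^{(\epsilon)}(T) = \Vol(T)$ exactly.

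For the complementary volume, $\overline{T} = (S-T) \cup \overline{S}$ contributes $\Vol(S-T)$ from the vertices of $S-T$ together with a term $\sum_{v \in \overline{S}} \epsilon\deg_\epsilon(v)$ from $\overline{S}$; since $\deg_\epsilon(v) \le \deg(v) = \bigOh{1}$, this extra term is $\bigOh{\epsilon}$ and vanishes as $\epsilon \to 0^+$. Invoking continuity of $\min$,
\[ \Phi^{(\epsilon)} \le \Phi^{(\epsilon)}(T) = \frac{e(T,\overline{T})}{\min\set{\Vol(T),\ \Vol(S-T) + \bigOh{\epsilon}}} \xrightarrow{\ \epsilon \to 0^+\ } \frac{e(T,\overline{T})}{\min\set{\Vol(T),\Vol(S-T)}} = \Phi_S. \]
Combining with $\lambda_2^{(\epsilon)} \le 2\Phi^{(\epsilon)} \le 2\Phi^{(\epsilon)}(T)$ and passing to the limit along the subsequence on which $\lambda_2^{(\epsilon)} \to \lambda_S$ gives $\lambda_S \le 2\Phi_S$.

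I expect the only genuinely delicate point to be the volume bookkeeping: confirming that scaling the $\overline{S}$-entries of $M_V^{(\epsilon)}$ by $\epsilon$ drives $\Vol^{(\epsilon)}(\overline{T})$ to $\Vol(S-T)$ rather than to $\Vol(\overline{T})$, so that the denominator of the test-cut conductance reproduces exactly the quantity $\min\set{\Vol(T),\Vol(S-T)}$ appearing in $\Phi_S$. Everything else is a direct substitution into the already-established Cheeger upper bound, with the $\rho_V = \rho_E = 0$ specialization eliminating all conformality factors.
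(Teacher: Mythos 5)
Your proposal is correct and takes essentially the same route as the paper, which proves this corollary in one line by noting that \Cref{T:cheeger} combined with the construction in the proof of \Cref{T:neumann} immediately yields the bound --- precisely the combination you carry out, with the diagonal (hence weakly $0$-conformal) inner products killing all conformality factors in the Cheeger upper bound. Your volume bookkeeping is also right, and in fact slightly cleaner than you claim: the $\bigOh{\epsilon}$ contribution from $\overline{S}$ only \emph{increases} the denominator, so $\Phi^{(\epsilon)}(T) \leq \Phi_S$ holds for every $\epsilon$ and no limiting argument is needed on the conductance side, only on $\lambda_2^{(\epsilon)} \to \lambda_S$.
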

As the compatibility of the inner products defined in the proof of \Cref{T:neumann} is $\nicefrac{1}{\epsilon}$, the limit of the lower-bound in \Cref{T:cheeger} is 0. It would be interesting to resolve whether this lower bound could be improved by a clever choice of inner products or refinements in the proof of \Cref{T:cheeger}.
 Indeed, having lower and upper bounds on $\lambda_S$ in terms of a polynomial of $\Phi_S$ would provide a new approach towards refuting the small set expansion hypothesis\footnote{The small set expansion hypothesis holds that it is \NPc\ to distinguish between a $n$-vertex, $d$-regular graph such that $e(S,\overline{S}) \geq (1-\epsilon)d\size{S}$ for all sets of size at most $\nicefrac{n}{\log_2(n)}$ and an $n$-vertex, $d$-regular graph which contains a set $S$ of size at most $\nicefrac{n}{\log_2(n)}$ such that $e(S,\overline{S}) \leq \epsilon d\size{S}$.} by providing efficient means of testing the edge-expansion of multiple small sets simultaneously. As the small set expansion hypothesis implies the Unique Games Conjecture~\cite{UGC}, and is equivalent to a restricted version of the Unique Games Conjecture~\cite{restrictedUGC}, we suspect that no such improvement is possible. 

\subsection{The Expander Mixing Lemma}
Over the years there have been a variety of different ``expander mixing lemma"-type results which bound the size of the boundary between two sets  $X$ and $Y$ in terms of the spectrum of an associated matrix and some measure of the size of $X$ and $Y$.  We forgo a thorough recounting of the history of such results and instead highlight three different versions of the expander mixing lemma\footnote{The expander mixing lemma is also occasionally, and perhaps more appropriately, called the \emph{discrepancy inequality} because of its close connection to  edge discrepancies and pseudorandomness~\cite{chung1997spectral}. However, as the term discrepancy appears in numerous mathematical contexts we will continue to use the more discoverable term ``expander mixing lemma."} which are representative of different aspects and similarities of the various expander mixing lemmas in the literature.

The first version of the expander mixing lemma we highlight is for the normalized Laplacian and is perhaps the most reminiscent of the early results~\cite{Alon:EML,Bussmaker:EML,haemers1979eigenvalue,Krivelevich:EML} for $d$-regular graphs.
\begin{nEML}
    Let $G$ be an $n$-vertex connected graph and let $0 = \lambda_1 < \lambda_2 \leq \cdots \lambda_n$ be the eigenvalues of the normalized Laplacian.  Let $\lambda = \max_{i \neq 1} \abs{1-\lambda_i}$.  For any two sets $X$ and $Y$, 
    \[\abs{ e(X,Y) + e(X \cap Y) - \frac{\Vol(X)\Vol(Y)}{\Vol(G)}} \leq \lambda \frac{\sqrt{\Vol(X)\Vol(\overline{X})\Vol(Y)\Vol(\overline{Y})}}{\Vol(G)}.\]
\end{nEML}

Informally, this result is often expressed as stating that the number of edges between $X$ and $Y$ is approximately equal to the number of edges that would be expected in a random graph with the same volumes for $X$ and $Y$.  However, a careful examination of the proof reveals that if $X$ and $Y$ are disjoint, the range over which $e(X,Y)$ can vary can be tightened.   Specifically, when $X$ and $Y$ are disjoint, $\lambda$ can be replaced by $\frac{\lambda_n-\lambda_1}{2}$ by shifting the spectrum by $\frac{\lambda_n+\lambda_1}{2}$ instead of $1$.  Applying this alteration to the case where $X$ and $Y$ are not disjoint leads to a similar result, differing only in a correction factor based on the size of the intersection for $X$ and $Y$. This observation was originally made by Chung for the combinatorial Laplacian~\cite{chung2004discrete}.\footnote{We note that there is a typographical error in \cite[Theorem 7]{chung2004discrete} in that the division by $n$ on the right hand side is missing.  However, as the proof of this theorem is a straightforward modification of \cite[Theorem 4]{chung2004discrete}, which has the division by $n$, it is easy confirm that the provided statement is correct.}
\begin{cEML}
    Let $G$ be an $n$-vertex connected graph and let $0 = \sigma_0 < \sigma_1 \leq \cdots \sigma_n$ be the eigenvalues of the combinatorial Laplacian.  For any two sets $X$ and $Y$, 
    \[ \abs{ e(X,Y) + e(X \cap Y) - \Vol(X \cap Y) -  \frac{\sigma_n + \sigma_1}{2}\paren{\frac{\size{X}\size{Y}}{n} - \size{X \cap Y}}} \leq \frac{\sigma_n - \sigma_1}{2} \frac{\sqrt{\size{X}(n-\size{X})\size{Y}(n-\size{Y})}}{n}. \]
\end{cEML}

It is worth noting, that unlike the the normalized Laplacian expander mixing lemma, the combinatorial Laplacian variant incorporates terms which are not related to the associated inner product spaces.  Specifically, $\Vol(X \cap Y)$ is not the ``size" of the set of vertices $X \cap Y$ in vertex inner product space for the combinatorial Laplacian. 
 Instead, it should be viewed as the size of the incident edges to $X \cap Y$, that is, $\Vol(X \cap Y) = \sum_{a \in X \cap Y} e(\set{a}),$ which is a representable in terms of the edge inner product.  
 
 More recently, Bollobas and Nikiforov~\cite{Bollobas:EML,nikiforov2009cut}  recognized that similar proof techniques can be adapted to the case of non-negative, not necessarily square, matrices.  This results in matrix expander mixing lemmas along the line of the following result of Butler~\cite{Butler:EML}.
\begin{mEML}
    Let $B$ be an $m \times n$-matrix with nonnegative entries and no zero columns or rows.  Let $R$ and $C$ be the unique diagonal matrices such that $B\one = R\one$ and $\one^TB = \one^TC$.  For all sets $S \subseteq [m]$ and $T \subseteq [n]$, 
    \[ \abs{ \one_S^TB\one_T - \frac{\one_S^TB\one \one^T B \one_T}{\one^TB\one}} \leq \sigma_2\paren{R^{-\half}BC^{-\half}} \sqrt{\one_S^TB\one \one^TB\one_T},\] where $\sigma_2(\cdot)$ takes a matrix to the second largest singular value.
\end{mEML}

Before generalizing the expander mixing lemma for the inner product Laplacian, we consider a small example which is helpful for understanding the trade-offs necessary in this context.  To begin, define the graph $G = (V,E)$ as the union of three sets, $A$, $B$, and $C$, with $\size{A} = \size{B} = k$ and $\size{C} = 2k$.  The edge set is the union of a complete bipartite graph between $A$ and $B$, a $k$-regular bipartite graph between $A \cup B$ and $C$, and a perfect matching on $C$.  The vertex and edge inner products are defined according to this partition of vertices and edges by the matrices
\[ M_V =
\kbordermatrix{
& A & B & C \\
A & (2k^2 + 2k)I & 0 & 0 \\
B & 0 & (2k^2 + 2k)I & 0 \\
C & 0 & 0 & (2k^2+k)I } \qquad M_E = \kbordermatrix{
& E(A,B) & E(A\cup B,C) & E(C) \\
E(A,B) & I + 2 J & 0 & 0 \\
E(A\cup B,C) & 0 & I & 0 \\
E(C) & 0 & 0 & 2k^2 I}
\]
As $M_V$ is diagonal with strictly positive diagonal, it is positive definite and weakly $0$-conformal.  Because of the block diagonal structure of $M_E$ it is easy to verify that $M_E$ is also positive definite.  Further, it is relatively straightforward to compute the weak conformality of $M_E$ by noting that since the non-zero diagonal terms are all in the principle block formed by $E(A,B)$, it suffices to consider only the witnessing partitions which partition $E(A,B)$.  From there, a straightforward optimization yields that the weak-conformality of $M_E$ is $\frac{k^2}{k^2+1}.$

We further note that $M_V$ and $M_E$ are perfectly $1$-compatible with respect to $G$ and thus the spectrum of $\IPL$ is contained in $[0,2]$.  Thus, if an expander mixing lemma type-result along the form of the results described above were to hold, we would expect that $e(A,B)$ would grow (in terms of $k$) at the same rate as one of 
\[ \frac{\Vol(A)\Vol(B)}{\Vol(G)}, \qquad \frac{\Vol(A,V)\Vol(B,V)}{\Vol(G)}, \textrm{or} \qquad \frac{\sqrt{\Vol(A)\Vol(\overline{A})\Vol(B)\Vol(\overline{B})}}{\Vol(G)}.\]
To see this is not the case, we observe that 
\begin{align*}
    e(A,B) &= 2k^4 + k^2, \\
    \Vol(A), \Vol(B), \Vol(A,V), \Vol(B,V) &= 2k^3 + 2 k^2, \\
    \Vol(\overline{A}), \Vol(\overline{B}) &= 6k^3 + 4 k^2, \quad \textrm{and} \\
    \Vol(G) &= 8k^3+6k^2. \\
    \end{align*}
In particular, $e(A,B) \in \bigTheta{k^4}$, while the candidate terms for the bounds are all $\bigTheta{k^3}$. In order to resolve this issue, it is necessary to explicitly incorporate the weak-conformality of $M_V$ and $M_E$ into the expander mixing lemma bounds.  While this shows a direct generalization of the expander mixing lemma is not possible for the inner product Laplacian, by incorporating terms based on the weak conformality $M_E$ into the bound we can produce the following general result. 

\begin{theorem}\label{T:EML} 
Let $G = (V,E)$ be a connected graph equipped with inner product spaces on the vertices and edges, $M_V = Q_V^2$ and $M_E = Q_E^2$, respectively.  Let $0 = \lambda_1 < \lambda_2 \cdots \leq \lambda_n$ be the spectrum of the associated inner product Laplacian.  If $G$ is connected, then for any two sets $X, Y \subseteq V$,
\[ \abs{e(X,Y) + e(X\cap Y) -\sum_{a \in X \cap Y} e(\set{a},\overline{\set{a}}) + \frac{\lambda_n+\lambda_2}{2} \frac{\Cor(X,Y)}{\Vol(G)}}\] is bounded above by \[\frac{\lambda_n-\lambda_2}{2} \frac{\sqrt{\Cor(X)\Cor(Y)}}{\Vol(G)} +   \frac{12\rho_E}{1-\rho_E^2} \trace(M_E). \] 
\end{theorem}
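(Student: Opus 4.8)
The plan is to realize the inner product Laplacian for the graph $G$ through its semi-Hodge form $\sHL = Q_V^{-1}BM_EB^TQ_V^{-1}$ and to work with the operator $\mathcal{A} = BM_EB^T$, which is self-adjoint with respect to the $M_V$ inner product and whose generalized spectrum (solutions of $\mathcal{A}g = \lambda M_V g$) is exactly $0 = \lambda_1 < \lambda_2 \leq \cdots \leq \lambda_n$; since $G$ is connected and $M_E$ is positive definite, $B^T\one = 0$ forces the kernel to be spanned by $\one$. I would then reduce the theorem to two independent estimates. First, a purely spectral bound on the \emph{signed} bilinear form $\one_X^T\mathcal{A}\one_Y = \paren{B^T\one_X}^TM_E\paren{B^T\one_Y}$; second, a conformality estimate showing that this signed form differs from the combinatorial quantity $T := e(X,Y) + e(X\cap Y) - \sum_{a \in X\cap Y}e(\set{a},\overline{\set{a}})$ by at most $\frac{12\rho_E}{1-\rho_E^2}\trace(M_E)$. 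As I will note below, $T = -\one_X^T\mathcal{A}\one_Y$ exactly when $M_E$ is diagonal, so combining the two estimates through the triangle inequality produces the stated bound, with the spectral part supplying the $\frac{\lambda_n+\lambda_2}{2}\frac{\Cor(X,Y)}{\Vol(G)}$ and $\frac{\lambda_n-\lambda_2}{2}\frac{\sqrt{\Cor(X)\Cor(Y)}}{\Vol(G)}$ terms and the conformality part supplying the additive error.

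For the spectral bound I would decompose $\one_X = \alpha_X\one + \one_X^\perp$, where $\alpha_X = \Vol(X,V)/\Vol(G)$ and $\one_X^\perp$ is $M_V$-orthogonal to $\one$, and similarly for $Y$. Since $\mathcal{A}\one = 0$ we have $\one_X^T\mathcal{A}\one_Y = \paren{\one_X^\perp}^T\mathcal{A}\one_Y^\perp$, and the centered operator $\mathcal{A} - \frac{\lambda_n+\lambda_2}{2}M_V$ has $M_V$-operator norm at most $\frac{\lambda_n-\lambda_2}{2}$ on the orthogonal complement of $\one$. Cauchy--Schwarz in the $M_V$ inner product then yields
\[ \abs{\one_X^T\mathcal{A}\one_Y - \frac{\lambda_n+\lambda_2}{2}\paren{\one_X^\perp}^TM_V\one_Y^\perp} \leq \frac{\lambda_n-\lambda_2}{2}\sqrt{\paren{\one_X^\perp}^TM_V\one_X^\perp}\sqrt{\paren{\one_Y^\perp}^TM_V\one_Y^\perp}. \]
The clean computation underpinning this is the identity $\paren{\one_X^\perp}^TM_V\one_Y^\perp = \Cor(X,Y)/\Vol(G)$, which I would verify by expanding $\Cor(X,Y)$ with $\one_{\overline{X}} = \one - \one_X$ to obtain $\Cor(X,Y) = \Vol(X,Y)\Vol(G) - \Vol(X,V)\Vol(Y,V)$; substituting this and its $X=Y$ specialization turns the display into precisely the spectral terms of the theorem.

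The combinatorial heart is the second estimate. Classifying each edge by the membership of its two endpoints in the four sets $X\cap Y$, $X\setminus Y$, $Y\setminus X$, $\overline{X\cup Y}$, one checks that the diagonal entries of $M_E$ contribute to $\one_X^T\mathcal{A}\one_Y$ exactly $\sum_{e\in N}(M_E)_{ee} - \sum_{e\in P}(M_E)_{ee}$, with $N = E(X\cap Y,\overline{X\cup Y})$ and $P = E(X\setminus Y,Y\setminus X)$, while the diagonal entries contribute to $T$ the negative of this; hence the diagonal parts of $T + \one_X^T\mathcal{A}\one_Y$ cancel and only off-diagonal entries of $M_E$ remain. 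I would bound this residual through the weak conformality of $M_E$: \Cref{L:trace} controls the gap between each unsigned form $\one_S^TM_E\one_S$ (namely $e(X,Y)$, $e(X\cap Y)$, and the incident-edge terms $e(\set{a},\overline{\set{a}})$) and its diagonal mass, and \Cref{L:conformal} together with polarization on $B^T\one_X \pm B^T\one_Y$ controls the corresponding gap for the signed form. Each gap is an off-diagonal discrepancy dominated by $\frac{\rho_E}{1-\rho_E^2}$ times a portion of $\trace(M_E)$, and since every edge is charged a bounded number of times the accumulated error is $\frac{12\rho_E}{1-\rho_E^2}\trace(M_E)$.

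I expect the main obstacle to be exactly this last accounting. Obtaining the constant $12$ with denominator $1-\rho_E^2$ (rather than a lossier $1-\rho_E$) forces me to exploit the cancellation between the off-diagonal part of $T$ and that of $\one_X^T\mathcal{A}\one_Y$, instead of bounding the two separately. The spectral bound and the $\Cor$ identity are routine; the delicate step is to organize the surviving off-diagonal contributions into a small number of groups, each a difference of a signed and an unsigned quadratic form on a common edge set, so that the conformality factor $\frac{4\rho_E}{1-\rho_E^2} = \frac{2\rho_E}{1-\rho_E} + \frac{2\rho_E}{1+\rho_E}$ appears against the correct total diagonal mass bounded by $\trace(M_E)$.
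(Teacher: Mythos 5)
Your proposal is correct and follows essentially the same route as the paper's own proof: the spectral shift by $\frac{\lambda_n+\lambda_2}{2}$ combined with Cauchy--Schwarz and the identity $\paren{\one_X^\perp}^TM_V\one_Y^\perp = \Cor(X,Y)/\Vol(G)$ is exactly the paper's first half (the paper phrases it via $\rho_X = Q_V\one_X$ and the symmetric matrix $\IPL$ rather than your $M_V$-self-adjoint operator $\mathcal{A} = BM_EB^T$, but these are conjugate formulations), and your reduction of the residual $T + \one_X^T\mathcal{A}\one_Y$ to purely off-diagonal discrepancies of $M_E$ --- exploiting the exact cancellation of the diagonal contributions, i.e.\ that $T = -\one_X^T\mathcal{A}\one_Y$ for diagonal $M_E$ --- is the same cancellation the paper implements by inserting the intermediate signed forms $\norm[E]{B_{X',Y'}}^2 - \norm[E]{B_{A,W}}^2$ and bounding the two resulting differences with \Cref{L:conformal} and \Cref{L:trace}. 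The remaining differences are presentational (generalized-eigenvalue language and polarization in place of the paper's explicit block expansion of $B^T\one_X$ and $B^T\one_Y$ over the partition $(X',A,Y',W)$), and the obstacle you flag --- the final accounting that produces the constant $12$ over $1-\rho_E^2$ --- is precisely where the paper spends the second half of its argument.
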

\begin{proof}
Let $\set{\vartheta_i}$ be an orthonormal basis for the eigenspaces of $\IPL$ where $\lambda_i$ is the eigenvalue associated with $\vartheta_i$.  Following a standard proof technique for the expander mixing lemma, for $\tau = \frac{\lambda_n+\lambda_1}{2}$ we define $\mathcal{L}_{\tau} = \tau I - \IPL - \tau \vartheta_1\vartheta_1^T$,  $\rho_X = Q_V \one_X$, and $\rho_Y = Q_V \one_Y$ and consider $\rho_X^T\mathcal{L}_{\tau}\rho_Y$.  We note that by construction, $\set{\vartheta_i}$ is also an orthonormal basis for the eigenspaces of $\mathcal{L}_{\tau}$.  Indeed, the spectrum of $\mathcal{L}_{\tau}$ can be recovered by the simple transformation $\lambda \mapsto \tau - \lambda$ of the non-zero eigenvalues of $\IPL.$  Thus we may rewrite $\rho_X = \sum_i \alpha_i \vartheta_i$ and $\rho_Y = \sum_i \beta_i \vartheta_i$, and thus
\[\abs{\rho_X^T \mathcal{L}_{\omega} \rho_Y} = \abs{ \sum_{i \geq 2} (\tau-\lambda_i)\alpha_i \beta_i } \leq \frac{\lambda_n - \lambda_2}{2} \sqrt{\sum_{i \geq 2} \alpha_i^2}\sqrt{\sum_{i \geq 2} \beta_i^2} = \frac{\lambda_n - \lambda_2}{2} \sqrt{\norm{\rho_X}^2 - \alpha_1^2}\sqrt{\norm{\rho_Y}^2 - \beta_1^2}.\]
Now, we note that
\begin{align*}
     \norm{\rho_X}^2 - \alpha_i^2 &= \one_X^TM_V\one_X - \paren{ \frac{\one_X^TQ_V^T Q_V \one}{\sqrt{\one^T Q_V^TQ_V \one}}}^2 \\
     &= \Vol(X) - \frac{\paren{\Vol(X) + \Vol(X,\overline{X})}^2}{\Vol(G)} \\
     &= \frac{1}{\Vol(G)}\paren{ \Vol(X)\Vol(G) - \Vol(X)^2 - 2\Vol(X)\Vol(X,\overline{X}) - \Vol(X,\overline{X})^2} \\
     &= \frac{1}{\Vol(G)}\paren{\Vol(X)\Vol(\overline{X}) - \Vol(X,\overline{X})^2} \\
     &= \frac{\Cor{X}}{\Vol(G)}
\end{align*}
Hence, we have that
\[ \abs{\rho_X^T\mathcal{L}_{\omega}\rho_Y} \leq \frac{\lambda_n - \lambda_1}{2}\frac{\sqrt{ \Cor(X)\Cor(Y)} }{\Vol(G)}. \]

Thus, to complete the proof it suffices to bound 
\[\abs{ e(X,Y) + e(X\cap Y)  - \sum_{a \in X \cap Y} e(\set{a},\overline{\set{a}}) + \tau \frac{\Cor(X,Y)}{\Vol(G)} - \rho_X^T\mathcal{L}_{\tau}\rho_Y}.\]
To this end, we first note that 
\begin{align*}
    \rho_X^T\rho_Y - \rho^T_X\vartheta_1\vartheta_1^T\rho_Y &= \Vol(X,Y) - \frac{\Vol(X,V)}{\sqrt{\Vol{G}}} \frac{\Vol(V,Y)}{\sqrt{\Vol{G}}} \\
    &= \frac{1}{\Vol(G)}\left[\Vol(X,Y) \Vol(G) - \Vol(X,V)\Vol(V,Y)\right] \\
    &= \frac{1}{\Vol(G)}\left[ \Vol(X,Y) \left(\Vol(X,Y) + \Vol(X,\overline{Y}) + \Vol(\overline{X},Y) + \Vol(\overline{X},\overline{Y}) \right) \right. \\
    &\phantom{=\frac{1}{\Vol(G)}}\quad - \left. \paren{\Vol(X,Y)+\Vol(X,\overline{Y})}\paren{\Vol(X,Y) + \Vol(\overline{X},Y)} \right] \\
    &= \frac{1}{\Vol(G)}\left[\Vol(X,Y)\Vol(\overline{X},\overline{Y}) - \Vol(X,\overline{Y)}\Vol(\overline{X},Y)\right]\\
    &= \frac{\Cor(X,Y)}{\Vol(G)}.
\end{align*}

As a consequence,
\[\abs{ e(X,Y) + e(X\cap Y)  - \sum_{a \in X \cap Y} e(\set{a},\overline{\set{a}}) + \tau \frac{\Cor(X,Y)}{\Vol(G)} - \rho_X^T\mathcal{L}_{\tau}\rho_Y}\]
reduces to 
\[\abs{ e(X,Y) + e(X\cap Y)  - \sum_{a \in X \cap Y} e(\set{a},\overline{\set{a}}) + \rho_X^T\IPL\rho_Y}.\]

In order to bound this term, we will break it into two terms which may at first seem to be unmotivated, but will arise naturally from the analysis of the individual terms.  
Before delving into the the decomposition of this term, it will be helpful to introduce some additional notation.  First, we note that $X$ and $Y$ induce a partition of the vertices into four sets,  $(X',A,Y',W)$, where $X' = X - Y$, $Y' = Y - X$, $A = X \cap Y$ and $W = \overline{X \cup Y} = V - X' - A - Y'$.  In what follows, we will often be dealing with the edges between pairs of these sets, so we introduce a compact nation, $B_{S,T}$, for the vector of signed edges between two disjoint sets, $S$ and $T$.  The vector $B_{S,T}$ takes on the value of $\set{-1,1}$ on edge between $S$ and $T$ with the sign determined by $B$.  That is,  $B_{S,T}$ is $B^T\one_S$ restricted to the edge set $E(S,T)$.  It is important to note that the order of the subscripts matter as $B_{S,T} = -B_{T,S}$. With this notation in hand, we now  proceed to bound 
\[ \abs{ \rho_X^T\IPL\rho_Y + \norm[E]{B_{X',Y'}}^2 - \norm[E]{B_{A,W}}^2} \quad \textrm{and} \quad \abs{ e(X,Y) + e(A)  - \sum_{a \in A} e(\set{a},\overline{\set{a}}) - \norm[E]{B_{X',Y'}}^2 + \norm[E]{B_{A,W}}^2} \]
separately.  For the first term, we note that 
 $\rho_X^T \IPL \rho_Y = \one_X^T Q_V^{\half}Q_V^{-\half} B M_E B^T Q_V^{-\half}W_V^{\half} \one_Y = \one_X^TBM_EB^T\one_Y$ can be rewritten as
\[ \paren{B_{X',Y'} + B_{X',W} + B_{A,Y'} + B_{A,W}}^T M_E \paren{B_{Y',X'} + B_{Y',W} + B_{A,X'} + B_{A,W}}.\]
This expands to $-\norm[E]{B_{X',Y'}}^2 + \norm[E]{B_{A,W}}^2$ plus a correction term.  Observing that  $B_{X',Y'}^TM_EB_{A,W} + B_{A,W}^TM_EB_{Y',X'} = 0$, this correction term is corresponds to the subblocks of $M_E$ indicated in \Cref{F:blocks}.  By applying the weak-conformality of $M_E$ and the arithmetic-geometric mean inequality, 
the magnitude of the correction can be bounded by
\[ 2\rho_E\paren{ \norm[E]{B_{X',Y'}}^2 +\norm[E]{B_{X',A}}^2 +\norm[E]{B_{X',W}}^2 +\norm[E]{B_{Y',A}}^2 +\norm[E]{B_{Y',W}}^2 +\norm[E]{B_{A,W}}^2 }.\]
Finally, by applying \Cref{L:trace} and recalling that $w_f = \one_f^TM_E\one_f$, this can be bounded above by 
\[ \frac{2\rho_E(1+\rho_E)}{1-\rho_E} \paren{ \sum_{f \in E(W,\overline{W})} w_f + \sum_{f \in E(\overline{W})} w_f}.\] 
\begin{figure}
    \centering
    \begin{tikzpicture}[
    empty/.style = {draw=black,very thin,fill=white},
    full/.style ={draw=black,very thin,fill=blue}
    ]
        \def\sz{20pt}
        \draw[empty] (0pt,0pt) rectangle ++(\sz,\sz);
        \draw[empty] (0pt,\sz) rectangle ++(\sz,\sz);
        \draw[full] (0pt,2*\sz) rectangle ++(\sz,\sz);
        \draw[full] (0pt,3*\sz) rectangle ++(\sz,\sz);
        \draw[empty] (0pt,4*\sz) rectangle ++(\sz,\sz);
        \draw[empty] (0pt,5*\sz) rectangle ++(\sz,\sz);

        \draw[full] (\sz,0pt) rectangle ++(\sz,\sz);
        \draw[empty] (\sz,\sz) rectangle ++(\sz,\sz);
        \draw[full] (\sz,2*\sz) rectangle ++(\sz,\sz);
        \draw[full] (\sz,3*\sz) rectangle ++(\sz,\sz);
        \draw[empty] (\sz,4*\sz) rectangle ++(\sz,\sz);
        \draw[full] (\sz,5*\sz) rectangle ++(\sz,\sz);

        \draw[empty] (2*\sz,0pt) rectangle ++(\sz,\sz);
        \draw[empty] (2*\sz,\sz) rectangle ++(\sz,\sz);
        \draw[empty] (2*\sz,2*\sz) rectangle ++(\sz,\sz);
        \draw[empty] (2*\sz,3*\sz) rectangle ++(\sz,\sz);
        \draw[empty] (2*\sz,4*\sz) rectangle ++(\sz,\sz);
        \draw[empty] (2*\sz,5*\sz) rectangle ++(\sz,\sz);

        \draw[empty] (3*\sz,0pt) rectangle ++(\sz,\sz);
        \draw[empty] (3*\sz,\sz) rectangle ++(\sz,\sz);
        \draw[empty] (3*\sz,2*\sz) rectangle ++(\sz,\sz);
        \draw[empty] (3*\sz,3*\sz) rectangle ++(\sz,\sz);
        \draw[empty] (3*\sz,4*\sz) rectangle ++(\sz,\sz);
        \draw[empty] (3*\sz,5*\sz) rectangle ++(\sz,\sz);

        \draw[full] (4*\sz,0pt) rectangle ++(\sz,\sz);
        \draw[empty] (4*\sz,\sz) rectangle ++(\sz,\sz);
        \draw[full] (4*\sz,2*\sz) rectangle ++(\sz,\sz);
        \draw[full] (4*\sz,3*\sz) rectangle ++(\sz,\sz);
        \draw[empty] (4*\sz,4*\sz) rectangle ++(\sz,\sz);
        \draw[full] (4*\sz,5*\sz) rectangle ++(\sz,\sz);

        \draw[empty] (5*\sz,0pt) rectangle ++(\sz,\sz);
        \draw[empty] (5*\sz,\sz) rectangle ++(\sz,\sz);
        \draw[full] (5*\sz,2*\sz) rectangle ++(\sz,\sz);
        \draw[full] (5*\sz,3*\sz) rectangle ++(\sz,\sz);
        \draw[empty] (5*\sz,4*\sz) rectangle ++(\sz,\sz);
        \draw[empty] (5*\sz,5*\sz) rectangle ++(\sz,\sz);

        \node at (-1.25*\sz,.5*\sz) () {$E(A,W)$};
        \node at (-1.25*\sz,1.5*\sz) () {$E(Y',W)$};
        \node at (-1.25*\sz,2.5*\sz) () {$E(Y',A)$};
        \node at (-1.25*\sz,3.5*\sz) () {$E(X',W)$};
        \node at (-1.25*\sz,4.5*\sz) () {$E(X',A)$};
        \node at (-1.25*\sz,5.5*\sz) () {$E(X',Y')$};
        \node[rotate=90] at (.5*\sz,7.25*\sz) () {$E(X',Y')$};
        \node[rotate=90] at (1.5*\sz,7.25*\sz) () {$E(X',A)$};
        \node[rotate=90] at (2.5*\sz,7.25*\sz) () {$E(X',W)$};
        \node[rotate=90] at (3.5*\sz,7.25*\sz) () {$E(Y',A)$};
        \node[rotate=90] at (4.5*\sz,7.25*\sz) () {$E(Y',W)$};
        \node[rotate=90] at (5.5*\sz,7.25*\sz) () {$E(A,W)$};
    \end{tikzpicture}
    \caption{Illustration of the subblocks of $M_E$ participating in the correction term.}
    \label{F:blocks}
\end{figure}

For the second term, we apply \Cref{L:trace} to each term individually and have the bound 
\[ \frac{4\rho_E}{1-\rho_E^2}\paren{\sum_{f \in E(W,\overline{W})} w_f + \sum_{f \in E(\overline{W})} w_f + 2\sum_{f \in E(A)} w_f}.\]
As $2\rho_E(1+\rho_E)^2 \leq 12\rho_E$, the combined error terms can be bounded above by $\frac{12\rho_E}{1-\rho_E^2}\trace(M_E),$ as desired.
\end{proof}

\bibliographystyle{siam}
\bibliography{references}
\end{document}